\documentclass[12pt]{amsart}

\usepackage{a4wide,amsmath,amssymb,enumerate}
\usepackage{graphicx}
\usepackage[usenames,dvipsnames,svgnames,table]{xcolor}
\usepackage[colorlinks=true, pdfstartview=FitV, linkcolor=ForestGreen,citecolor=ForestGreen, urlcolor=blue]{hyperref}


\newtheorem{thm}{Theorem}[section]
\newtheorem{cor}[thm]{Corollary}
\newtheorem{lem}[thm]{Lemma}
\newtheorem{prop}[thm]{Proposition}

\theoremstyle{definition}

\theoremstyle{definition}

\theoremstyle{definition}
\newtheorem{ex}[thm]{Example}
\theoremstyle{definition}
\newtheorem{defn}[thm]{Definition}

\theoremstyle{remark}
\newtheorem{rem}[thm]{Remark} 

\numberwithin{equation}{section}


\newcommand{\R}{\mathbb R}

\newcommand{\Sd}{\mathbb{S}_{d+1}(\R)}

\def\1{\mathbb I}

\def\e{\varepsilon}

\def\eps{\varepsilon}

\DeclareMathOperator{\trace}{Trace}

\begin{document}

\title[Effective junction conditions for parabolic equations]{Effective
  junction conditions for degenerate parabolic equations}

\author{Cyril Imbert}
\address[C.~Imbert]{CNRS \& Department of Mathematics and Applications, \'Ecole Normale Sup\'erieure (Paris) \\
45 rue d'Ulm, 75005 Paris, France}
\email{Cyril.Imbert@ens.fr}
\author{Vinh Duc Nguyen}
\address[V.~Nguyen]{CERMICS, \'Ecole Nationale des Ponts et Chauss\'ees, Universit\'e Paris-Est \\
 6 et 8 avenue Blaise Pascal, 77455 Marne-La-Vallée Cedex 2, France}
\email{Vinh.Nguyen@math.cnrs.fr}

\date{\today}

\begin{abstract}
  We are interested in the study of parabolic equations on \emph{a
    multi-dimensional junction}, i.e. the
  union of a finite number of copies of a half-hyperplane of dimension
  $d+1$ whose boundaries are identified. The common boundary is
  referred to as the \emph{junction hyperplane}.  The parabolic
  equations on the half-hyperplanes are in non-divergence form, fully
  non-linear and possibly degenerate, and they do degenerate and are
  quasi-convex along the junction hyperplane. More precisely, along
  the junction hyperplane the nonlinearities do not depend on second
  order derivatives and their sublevel sets with respect to the
  gradient variable are convex. The parabolic equations are
  supplemented with a non-linear boundary condition of Neumann type,
  referred to as a \emph{generalized junction condition}, which is
  compatible with the maximum principle. Our main result asserts that
  imposing a generalized junction condition in a weak sense reduces to
  imposing an \emph{effective one} in a strong sense.  This result
  extends the one obtained by Imbert and Monneau for Hamilton-Jacobi
  equations on networks and multi-dimensional junctions. We give two
  applications of this result.  On the one hand, we give the first
  complete answer to an open question about these equations: we prove
  in the two-domain case that the vanishing viscosity limit associated
  with quasi-convex Hamilton-Jacobi equations coincides with the
  maximal Ishii solution identified by Barles, Briani and Chasseigne
  (2012). On the other hand, we give a short and simple PDE proof of a
  large deviation result of Bou\'e, Dupuis and Ellis (2000).
\end{abstract}

\keywords{Parabolic equations, viscosity solutions, networks,
  effective boundary conditions, vanishing viscosity limit, large
  deviation problems}

\subjclass[2010]{49L25, 35K65, 35R02}

\maketitle

\setcounter{tocdepth}{1}
\tableofcontents

\section{Introduction}

\subsection{Degenerate parabolic equations on junctions}

\emph{Multi-dimensional junctions} \cite{fw04,im14,oudet} are union of
half-spaces whose boundaries are identified -- see
Figure~\ref{fig:junction}. Precisely:
\[
J=\bigcup_{i=1}^N J_i \quad 
\text{ with } \quad 
\begin{cases} J_i = \{ x= (x',x_i): x' \in \R^d, x_i
  \ge 0 \} \simeq \R^{d+1}_+ \smallskip\\ 
J_i\cap J_j = \Gamma \simeq \R^d  \quad \text{ for }\quad i\not=j. 
\end{cases}
\]
\begin{figure}
\includegraphics[height=4cm]{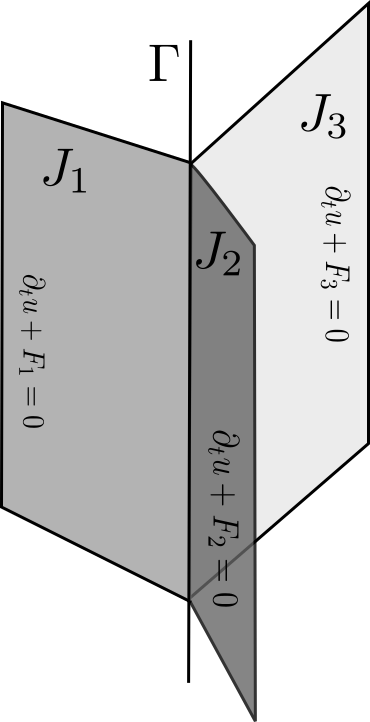}
\caption{A parabolic equation posed on a multi-dimensional
  junction. Here there are 3 branches ($N=3$) and the tangential
  dimension is $1$ ($d=1$). We did not illustrate the junction
  condition $L=0$ on the junction hyperplane $\Gamma$ (which is a line
  in this example).}
\label{fig:junction}
\end{figure}
Given $T \in [0,+\infty]$, we consider a general degenerate parabolic
equation posed on a junction,
\begin{eqnarray}
\label{eq:dp-gen}
&& \left\{
\begin{array}{ll}
u_t +  F_i(t,x,Du,D^2u)=0 & (t,x)\in (0,T) \times J_i^*, i=1,\dots, N, \\
L(-u_t,\partial_1 u,\dots,\partial_N u,t,x',D'u)=0 & (t,x) \in (0,T)\times \Gamma
\end{array}
\right.
\end{eqnarray}
where $J_i^*$ denotes $J_i \setminus \Gamma$, $u_t$ denotes the time
derivative, $Du$ and $D^2u$ respectively denote the gradient and the
Hessian of $u$ with respect to $x$, and for $x' \in \Gamma$,
$\partial_i u(x')$ denotes the derivative of $u_i(x)= u|_{J_i}(x)$
with respect to $x_i$ at $x_i=0$ (recall $x=(x',x_i)$) and $D'u$
denotes the derivative with respect to $x'$. 
\begin{ex}
  The case $N=1$ corresponds to the study of a degenerate parabolic
  equation posed on a half-space, subject to a non-linear boundary
  condition (dynamic or not). Example~\ref{ex:1d} illustrates how the
  main theorem can be applied in this special case.  The case $N=2$
  corresponds to the \emph{two-domain case}: a degenerate parabolic
  equation has coefficients which are continuous on either part of a
  hyperplane (or a smooth interface); the generalized junction
  condition can be thought as a transmission
  condition. Theorem~\ref{thm:vvl-evol-tilde} is an application of the
  main theorem with  $N=2$.
\end{ex}

We make the following assumptions on
each $F_i$. 
\medskip

\noindent \textbf{Assumption (F).}
\begin{itemize}
\item[\bf (F1).]  The function $F_i$  is continuous and degenerate elliptic.
\item[\bf (F2).] For all  $R>0$,  there exists  $C_{i,R} >0$   such that for all 
 $y=(y',y_i)$, all $p \in \R^{d+1}$, all $B \in \Sd$ and all $\lambda >0$
\[  
\left. \begin{array}{r} 
s \in (0,T), |y_i|\le 1 \\ |y'| +|B| \le R 
\end{array}
\right\} \Rightarrow
 F_i(s,y,p,B + \lambda e_{d+1} \otimes e_{d+1}) \ge F_i (s,y,p,B) 
- C_{i,R}\lambda |y_i|^2.
\]
\item[\bf (F3).] For all  $R>0$,
\[ \lim_{|p|\to +\infty} \inf_{t \in (0,T), |x| + |B| \le R} F_i(t,x,p,B) = +\infty. \]
\item[\bf (F4).] There exists
  $H_i : (0,T) \times \Gamma \times \R^d \times \R \to \R$ continuous
  such that
\begin{itemize}
\item for all
  $(t,x',p',p_i,B) \in (0,T) \times \Gamma \times \R^{d}\times \R
  \times \Sd$, $F_i(t,(x',0),(p',p_i),B) = H_i (t,x',p',p_i)$;
\item for all $t\in (0,T)$, $x' \in \Gamma$, for all $\lambda \in \R$,
  the set $\{ p=(p',p_i) \in \R^{d+1} : H_i (t,x',p) \le \lambda \}$ is
  convex.
\end{itemize}
\end{itemize}
In the assumption above, $\Sd$ denotes the set of real-valued
$(d+1)\times (d+1)$ symmetric matrices and $e_{d+1}$ denotes the unit
vector orthogonal to $\Gamma$ and pointing inside $J_i$.  We recall
that $F_i(t,x,p,A)$ is degenerate elliptic if it is non-increasing with
respect to $A$ (using the classical partial order on $\Sd$).  The
function $H_i$ appearing in (F) is referred to as the
\emph{Hamiltonian} from the branch $J_i$.
\begin{ex}[First order case]\label{ex:one}
  The first example we give is the one coming from \cite{im13,im14}.
  It reduces to deal with $F_i(t,x,p,B) = H_i(t,x,p)$ for any $x \in J_i$
  (and not only $x=(x',0) \in \Gamma$) and $p \in \R^d$ with $H_i$
  continuous, coercive in $p$ uniformly in $x$, i.e. satisfying
  \[ \lim_{|p|\to +\infty} \inf_{(t,x) \in (0,T) \times J} H_i(t,x,p) = +\infty\]
  and quasi-convex in $p$, i.e. the sublevel sets
  $\{ p \in \R^{d+1}: H_i (t,x,p) \le \lambda \}$ are convex for all
  $\lambda \in \R$ and $(t,x) \in (0,T)\times \Gamma$.
\end{ex}
\begin{ex}[The model case]
  Our results apply to the model case where 
  \(F_i(t,x,p,B) = H_i (t,x,p) - \trace (\sigma_i(x) \sigma_i^T(x) B)\)
  with $H_i$ is as in Example~\ref{ex:one} and where the
  $(d+1)\times m$ real matrix $\sigma_i$ is such that
  $\sigma_i \equiv 0$ on $\Gamma$ and the $(d+1)$-th line
  $\sigma_i^{d+1}$ of $\sigma_i$ satisfies
  $|\sigma_i^{d+1} (y)|\le c_i |y_{d+1}|$. Remark that this latter
  condition holds true if $\sigma_i \equiv 0$ on $\Gamma$ and
  $\sigma_i$ is Lipschitz continuous.
\end{ex}
 As far as the junction function $L$ is concerned, we make the following 
assumption. 
\medskip

\noindent \textbf{Assumption (L).}
\begin{itemize}
\item[\bf (L1).]  The function $L$ is continuous.
\item[\bf (L2).] The function $L(p_0,\dots,p_N,t,x',p')$ is
  non-increasing in $p_i$ for $i=0,\dots,N$.
\item[\bf (L3).]  $\forall i, p_i < q_i  
  \Rightarrow L(p_0,\dots, p_N,t,x',p') > L(q_0,\dots,q_N,t,x',p')$.
\item[\bf (L4).]
  $ \displaystyle \inf_{t,x',p'} L(p_0,\dots,p_N,t,x',p') \to + \infty
  \text{ as } \min_{i=1,\dots,N} p_i \to -\infty.$
\item[\bf (L5).]
$\displaystyle \sup_{t,x',p'}  L(p_0,\dots,p_N,t,x',p') \to - \infty \text{ as } 
  \max_{i=0,\dots,N} p_i \to +\infty$.
\end{itemize}
\begin{ex}[Kirchoff conditions] \label{ex:kirchoff}
A model for $L$ is 
\[ L (p_0,\dots,p_N) = - \sum_{i=1}^N \beta_i p_i \]
with $\beta_i > 0$ for all $i$.  Such a condition is called a Kirchoff
condition.
\end{ex}
\begin{ex}[Flux-limited junction conditions]
A second important example of junction functions $L$ is the one related to
flux-limited solutions \cite{im13,im14}.  Given a \emph{flux limiter}
$A$,
\[ 
\begin{cases} 
  A : (0,T) \times \Gamma \times \R^d \to \R \text{ continuous } \\
  \text{ for all } (t,x') \in (0,T) \times \Gamma, \lambda \in \R, \{ p'
  \in \R^d: A (t,x',p') \le \lambda \} \text{ convex }
\end{cases}
\]
we consider the associated junction function $L_A$ defined by
\begin{equation} \label{eq:defi-la} 
L_A (p_0,\dots,p_N,t,x',p') = -p_0 + \max( A (t,x',p'), \max_i H_i^- (t,x',p',p_i))
\end{equation}
where $H_i^-(t,x,p',p_i)$ denotes the non-increasing part of
$p_i \mapsto H_i (t,x',p',p_i)$ \cite{im14}: if
$p_i \mapsto H_i (t,x',p',p_i)$ reaches its minimum at $\pi_i^0(t,x',p')$,
which is the minimal minimizer, then
\[ H_i^- (t,x',p',p_i) = \begin{cases} H_i (t,x',p',p_i) & \text{ if } p_i \le \pi_i^0(t,x',p') \\
H_i (t,x',p',\pi_i^0 (t,x',p')) & \text{ if } p_i \ge \pi_i^0(t,x',p').
\end{cases}\]
\end{ex}
\begin{rem}
  The flux-limited function $F_A$ defined in \cite{im13,im14}
  corresponds to 
  \begin{align*}
 F_A (p_1,\dots,p_N,t,x',p') &= \max( A (t,x',p'), \max_i H_i^-
  (t,x',p',p_i))\\
&= L_A (p_0,p_1,\dots,p_N,t,x',p')+p_0.
\end{align*}
\end{rem}
The appropriate notion of weak solutions for Hamilton-Jacobi equations
is the one of viscosity solutions, introduced by Crandall and Lions
\cite{cl83} -- see also \cite{cel}.  It is explained in
\cite{im13,im14} that two notions of viscosity solutions are needed in
the study of Hamilton-Jacobi equations on networks, depending on the
type of junction conditions we impose. We will see that it is also the
case for the degenerate parabolic equations we consider in this work.
For general junction functions $L$ in \eqref{eq:dp-gen}, the junction
condition has to be understood in the following weak sense: either the
junction condition $L=0$ or one of the equations $u_t + F_i=0$ is
satisfied. We refer to such viscosity solutions as \emph{relaxed
  solutions} -- see Definition~\ref{defi:relaxed} below.  But for the
special junction conditions $L_A$ given by \eqref{eq:defi-la}, relaxed
solutions satisfy the junction condition in a stronger sense: the
junction condition $L_A=0$ is indeed satisfied
(Proposition~\ref{prop:rel-fl}).  Such viscosity solutions are
referred to as \emph{flux-limited solutions} -- see
Definition~\ref{defi:FL} below.

\subsection{Main result}

The main result of this article is about \emph{equivalent classes of
  generalized junction conditions}. Roughly speaking, we prove that
imposing a general junction condition amounts to imposing an effective
one. This effective junction condition corresponds to some $L_A$ given
in \eqref{eq:defi-la} for some flux limiter $A=A_L$. This flux limiter
only depends on the junction function $L$ and the Hamiltonians
$H_i$.  Moreover, the \emph{effective junction condition} $L_{A_L}$ is
satisfied in a strong sense: if the relaxed solution $u$ is
continuously differentiable in time and space up to the junction
hyperplane $\Gamma$, then the boundary condition $L=0$ on $\Gamma$ can
be lost (see the discussion above and Definition~\ref{defi:relaxed})
but $L_{A_L}=0$ on $\Gamma$ is indeed satisfied in the classical
sense.
\begin{defn}[The effective flux limiter $A_L$] \label{defi:al} 
 Let 
\begin{equation}\label{a0}
 A_0 (t,x',p') = \max_{i=1,\dots,N} \min_{p_i \in \R} H_i(t,x',p',p_i)
\end{equation}
and $p_i^0 \ge \pi_i^0 (t,x',p')$ be the minimal $p_i$ such that
  \(H_i(t,x',p',p_i) =A_0(t,x',p').\)  
For all $(t,x',p')$, the \emph{effective flux limiter} 
  $A_L(t,x',p')$ is defined as follows: if
  \[L(A_0(t,x',p'),p_1^0,\dots,p_N^0,t,x',p') \le 0,\] then $A_L(t,x',p')=A_0(t,x',p')$,
else $A_L(t,x',p')$ is the only real number $\lambda \ge A_0 (t,x',p')$  such that
there exists $p_i^+ \ge p_i^0$ with
\[ H_i (t,x',p',p_i^+) = \lambda \quad \text{ and } \quad
L(\lambda,p_1^+,\dots,p_N^+,t,x',p')=0.\]
\end{defn}
\begin{rem}
  We will give in Section~\ref{sec:class} other representations of
  $A_L$ -- see Proposition~\ref{prop:al}.  We note that if $L$
  satisfies (L) then $\lambda$ is unique. But the $p_i^+$ are not (in
  general) -- see the case on the right at the top of
  Figure~\ref{fig:1} in Example~\ref{ex:1d} below.
\end{rem}
\begin{thm}[Effective junction conditions]
\label{thm:class}
Assume \emph{(F)}, \emph{(L)}. Then
$A_L: (0,T) \times \Gamma \times \R^d \to \R$ given in
Definition~\ref{defi:al} is well-defined, continuous, such that
\[ 
\lim_{|p'| \to + \infty} \inf_{(t,x') \in (0,T) \times \Gamma} A_L(t,x',p') = +\infty 
\] 
and such that any $L$-relaxed sub-solution (resp. super-solution) of
\eqref{eq:dp-gen} is an $A_L$-flux-limited sub-solution
(resp. super-solution) of \eqref{eq:dp-gen}. Moreover, if
\[ \{ p' \in \R^d: L(p_0,p_1,\dots,p_N,t,x',p') \le \lambda \} \text{
  is convex } \]
for all $p_0,\dots,p_N,\lambda \in \R$ and all
$(t,x') \in (0,T) \times \Gamma$, then
\[ \{ p' \in \R^d : A_L(t,x',p')\le \lambda \} \text{ is convex } \]
for all $\lambda \in \R$ and $(t,x') \in (0,T) \times \Gamma$.
\end{thm}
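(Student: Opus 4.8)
The plan is to treat the assertions in turn, fixing $(t,x')$ and viewing everything as a function of $p'$ together with the auxiliary level $\lambda$.

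\emph{Well-definedness and coercivity.} Fix $(t,x',p')$. By (F3)--(F4) each $p_i\mapsto H_i(t,x',p',p_i)$ is continuous, coercive and quasi-convex, so it decreases to its minimum $m_i$ at $\pi_i^0$ and then increases to $+\infty$; hence $A_0=\max_i m_i$ and the minimal points $p_i^0$ are well defined. For $\lambda\ge A_0$ let $p_i^+(\lambda)$ run over the (possibly non-unique, when $H_i$ is flat) solutions of $H_i=\lambda$ with $p_i^+\ge p_i^0$, and consider the attainable values $L(\lambda,p_1^+,\dots,p_N^+,t,x',p')$. By (L2) these form a band with largest element $\overline L(\lambda)$ (smallest $p_i^+$) and smallest element $\underline L(\lambda)$ (largest $p_i^+$), both non-increasing in $\lambda$, and by (L5) both tend to $-\infty$ as $\lambda\to+\infty$. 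If $\overline L(A_0)=L(A_0,p^0)\le 0$ we set $A_L=A_0$; otherwise the existence of a level with $0$ in the band follows from the intermediate value theorem, applied across $\lambda$ and, at jumps coming from flat parts of some $H_i$, across the band itself. Uniqueness is the key algebraic point: if two levels $\lambda_1<\lambda_2$ both admitted witnesses with $L=0$, then $p_i^+(\lambda_1)<p_i^+(\lambda_2)$ for every $i$ (the values of $H_i$ differ), so all arguments of $L$ increase strictly and (L3) forces $0=L(\lambda_1,\dots)>L(\lambda_2,\dots)=0$, a contradiction. Finally, since $A_L\ge A_0=\max_i\min_{p_i}H_i$ and, by (F3), each $\min_{p_i}H_i(t,x',p',p_i)\to+\infty$ as $|p'|\to\infty$, the coercivity of $A_L$ follows.

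\emph{Continuity (the main obstacle).} I expect this to be the hard part, precisely because the $p_i^+$ need not be unique: the maps $\lambda\mapsto\overline L,\underline L$ and their dependence on $(t,x',p')$ may jump when some $H_i$ has flat pieces. First, $m_i$ and hence $A_0=\max_i m_i$ are continuous by continuity and coercivity of $H_i$. For $A_L$ I would establish upper and lower semicontinuity separately from the characterisation $A_L\le\lambda \iff A_0\le\lambda \text{ and } \underline L(t,x',p',\lambda)\le 0$, combined with the strict monotonicity at the crossing level furnished by (L3): given $\varepsilon>0$ one shows that $\underline L(\cdot,\lambda_0+\varepsilon)<0$ and $\overline L(\cdot,\lambda_0-\varepsilon)>0$ persist on a neighbourhood, pinning $A_L$ to $[\lambda_0-\varepsilon,\lambda_0+\varepsilon]$ there. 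Controlling the band across the discontinuities of $p_i^+$ is exactly the technical core; I would handle it by the alternative, manifestly continuous representation of $A_L$ announced in Proposition~\ref{prop:al}.

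\emph{Convexity.} Assume the sublevel sets of $L$ in $p'$ are convex. Partial minimisation over $p_i$ and maximisation over $i$ preserve quasi-convexity, so $m_i(t,x',\cdot)$ and thus $A_0(t,x',\cdot)$ are quasi-convex, giving convexity of $\{A_0\le\lambda\}$. For the full statement I use $A_L\le\lambda \iff A_0\le\lambda \text{ and } \exists\,p_1,\dots,p_N \text{ with } H_i(t,x',p',p_i)\le\lambda,\ L(\lambda,p_1,\dots,p_N,t,x',p')\le 0$; by (L2) the witnesses may be taken on the upper boundary $q_i(p')=\max\{p_i:H_i(t,x',p',p_i)\le\lambda\}$, which is concave in $p'$ since $\{(p',p_i):H_i\le\lambda\}$ is convex by (F4). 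It then remains to prove that $p'\mapsto L(\lambda,q_1(p'),\dots,q_N(p'),t,x',p')$ is quasi-convex on $\{A_0\le\lambda\}$; this is the delicate step, where the $p'$-convexity hypothesis on $L$ must be merged with the monotonicity (L2) and the concavity of the $q_i$ (a naive convex-combination of witnesses fails because (L2) gives no uniform comparison), and I would again carry it out through the representation of Proposition~\ref{prop:al}. Projecting the resulting convex set onto the $p'$-coordinates then yields convexity of $\{A_L\le\lambda\}$.
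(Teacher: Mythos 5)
There is a major gap: your proposal only addresses the ``algebraic'' half of the statement (well-posedness, coercivity, continuity and convexity of the sublevel sets of $A_L$) and omits entirely the central assertion of the theorem, namely that every $L$-relaxed sub-solution (resp.\ super-solution) of \eqref{eq:dp-gen} is an $A_L$-flux-limited sub-solution (resp.\ super-solution). This is the actual ``effective junction condition'' content, and in the paper it occupies most of the proof. It cannot be obtained from properties of the function $A_L$ alone: the paper first invokes the weak continuity of relaxed sub-solutions (Lemma~\ref{lem:weak-cont}, which itself uses (F3) and (L4)), then the reduction of the set of test functions (Theorem~\ref{thm:reduced}, resting on the critical-slope Lemmas~\ref{first tec} and \ref{second tec}, where (F2) enters), and finally a perturbation argument: one tests with reduced test functions whose normal slopes lie in $[\pi_i^+(A_L-\eps),\hat\pi_i^+(A_L-\eps)]$, and uses (L3) together with the inequality $L(A_L,\pi_1^+(A_L),\dots,\pi_N^+(A_L))\ge 0$ of Proposition~\ref{prop:al} to exclude the alternative ``$L\le 0$'' branch in the definition of relaxed solutions, forcing $-\phi_t\ge A_L-\eps$ and hence $L_{A_L-\eps}\le 0$. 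None of this machinery, nor any substitute for it, appears in your write-up.

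On the parts you do treat: the well-definedness and uniqueness argument via (L3) and (L5) is essentially the paper's $S=I$ argument in Proposition~\ref{prop:al} and is fine; coercivity via $A_L\ge A_0$ matches the paper. For continuity and convexity you explicitly defer the ``delicate steps'' to the representation of Proposition~\ref{prop:al} without carrying them out; the paper itself delegates these two points to the proof of Theorem~2.13 in \cite{im14}, so this is at worst incomplete rather than wrong, but as written it is a promissory note, not a proof. The decisive defect remains the missing sub-/super-solution transfer.
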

\begin{rem}
  Applying Theorem~\ref{thm:class} in the case $N=1$, \emph{effective
    boundary conditions} for degenerate parabolic equations posed on a
  domain are obtained; see Example~\ref{ex:1d} for instance.  In the
  case $N=2$, we get \emph{effective transmission conditions}; see
  Theorem~\ref{thm:vvl-evol-tilde} for instance. 
\end{rem}
\begin{figure}[t]
\includegraphics[height=4cm]{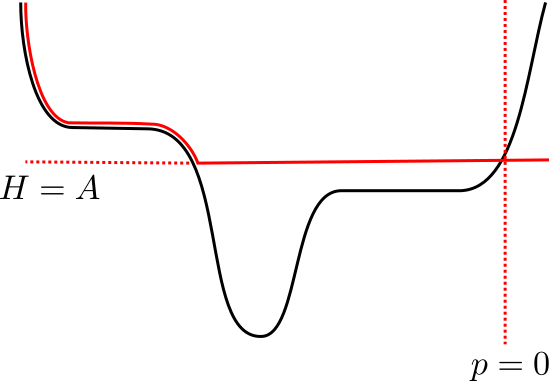}
\includegraphics[height=4cm]{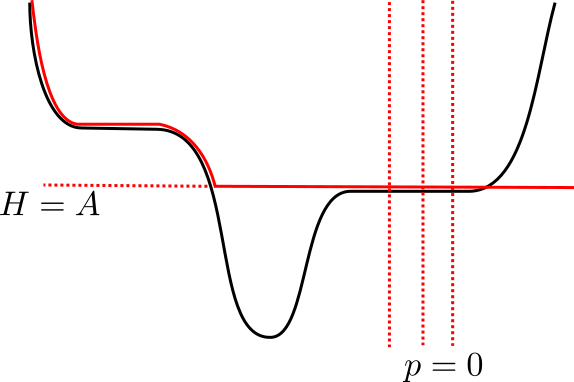}
\includegraphics[height=4cm]{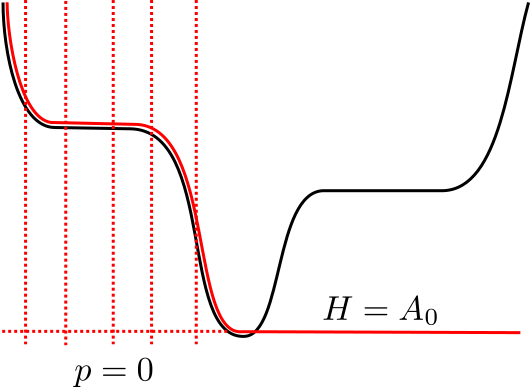}
\caption{This figure illustrates Example~\ref{ex:1d} where $N=1$
  (number of branches) and $d=0$ (dimension of the tangential
  space). The effective flux limiter $A$ is determined in each case by
  looking at the points where the vertical line $\{p=0\}$ intersects
  the graph of the Hamiltonian $H=H_1$; the variable $p$ reduces here
  to $p_1$ in the general setting. }
\label{fig:1}
\end{figure}
\begin{ex}[The 1D Neumann problem on a half-line]\label{ex:1d}
We illustrate our result on the simplest example: 
\[\begin{cases}
u_t + H(u_x) -x^2 u_{xx}= 0, &  x >0, \\
-u_x =0, & x =0
\end{cases} \]
where $H$ is a quasi-convex function (i.e.
$\{p \in \R: H(p) \le \lambda \}$ convex for all $\lambda \in \R$) as
illustrated in Figure~\ref{fig:1}.  This example corresponds to the
case $N=1$ (number of branches) $d=0$ (dimension of the tangential
space) and $H_1 = H$ and
$L_{\mathrm{Neu}} (-u_t,\partial_1 u) = -\partial_1 u$.  In the three
pictures, the plain black curve represents the Hamiltonian $H=H_1$ and
the plain red curve represents the effective flux-limited function
$F_A (p_1) = \max (A,H^-(p_1))$ associated with the generalized flux
function $L_{\mathrm{Neu}}$ associated with the Neumann boundary
condition.  Depending on the position of the graph of $H$ and the
vertical line $\{p=0\}$, the effective flux limiter $A$ associated
with the Neumann condition varies.  On the left at the top, the line
$\{p=0\}$ intersects the graph of $H$ in its increasing part.  On the
right at the top, the vertical line $\{p=0\}$ intersects the graph of
$H$ in the non-decreasing part, but on a constant part. This second
case illustrates that we exhibit equivalent classes of boundary
conditions; indeed, different junction conditions can be equivalent to
the same effective one: other vertical lines (corresponding to
$u_x = \mathrm{const}$ at $x=0$) have the same effective boundary
condition (because they have the same effective flux limiter).  This
is also illustrated in the last case: the vertical line $\{p=0\}$ (and
others) intersects the graph of $H$ in its non-increasing part, which
implies that the flux limiter coincides with
$A_0=\min H = \max_i (\min H_i)$ for all the vertical lines appearing
in this third picture.
\end{ex}

\subsection{Comments on the main result}

Our main result, Theorem~\ref{thm:class}, extends the results
contained in \cite{im13,im14} in two directions: first, we can deal
with Kirchoff conditions (see Example~\ref{ex:kirchoff}), second we
can deal with second order terms (but degenerating along the
junction).

As in \cite{im13,im14}, the effective junction condition result is
quite a straightforward consequence of the following important fact
(Theorem~\ref{thm:reduced}): in order to check that a function is a
flux-limited sub- and super-solution, it is enough to use a reduced
set of test functions $\varphi$ whose normal derivatives
$\partial_i \varphi$ have specific values along $\Gamma$. For
instance, these normal derivatives are equal to $\pi_i^+(p',A(p'))$ if the
Hamiltonian has no constant parts and does not depend on $x'$. We
recall that, roughly speaking, $\pi_i^+$ is the inverse function of
the non-decreasing part of $H_i$, see \eqref{def:pi+} below.

The first version of this paper contained a comparison principle for
\eqref{eq:dp-gen}, under stronger assumptions on $F$. On the one
hand, the proof was quite difficult, relying on the vertex test
function introduced in \cite{im13,im14}, for which $C^2$ regularity
was to be proved in the multi-dimensional setting.  On the other hand,
new and simpler techniques now emerge to attack this problem, see for
instance \cite{bbcim,guerand,ls,monneau}. In particular, it is
explained in \cite{bbcim} that the equations considered in the present
work can be handled in the two-domain case.  For these two reasons, we
decided to restrict ourselves to the core of the work, that is to say
the study of effective junction conditions.

\subsection{Comments on assumptions}

Assumptions (F1), (L1), (L2) are natural (if not necessary) when
dealing with viscosity solutions of continuous Hamilton-Jacobi
equations. In particular, (L2) ensures that the junction condition is
compatible with the maximum principle.  We recall that our goal is to
exhibit effective junction conditions for degenerate parabolic
equations. In particular, we want to understand what are the
\emph{effective} junction conditions that are imposed at the
junction. From this point of view, it is necessary to consider
degenerate parabolic equations which actually degenerate along
$\Gamma$. This is exactly (F4). We also assume that the Hamiltonians
have convex sublevel sets, see (F4).  This condition can probably be
relaxed but until very recent contributions \cite{guerand,monneau,ls}
(none of these contributions were not available when the first version
of this work appeared), the non-convex case was out of reach. As far
as (F3) is concerned, it ensures that the Hamiltonians are coercive, a
property which is used repeatedly and is at the core of most
proofs. It is used together with (L4) for instance to derive the
``weak continuity'' of sub-solutions (see Lemma~\ref{lem:weak-cont}
below). Condition (F2) is used in an essential way when proving that
the set of test functions can be reduced (see the proof of
Lemma~\ref{second tec} about critical slopes below). Remark that this
condition is weaker than the one which is needed in order to prove
uniqueness, see \cite[Condition~(3.14)]{cil92}. To finish with, (L3)
and (L5) are used when proving the main result.

 \subsection{An application: the vanishing viscosity limit}

Because we are able to deal with Kirchoff conditions, we are in
position to adress an open problem about Hamilton-Jacobi equations
from ``regional control'' problem: the identification of the vanishing
viscosity limit.

 We study the limit as $\eps \to 0$ of the equation posed in
 $(0,+\infty) \times \R^{d+1}$
\begin{equation}\label{eq:vvl}
\begin{cases}
v^\eps_t + \tilde{H}_1(t,x,D v^\eps) = \eps \Delta v^\eps, & x_{d+1} <0, t>0 \\
v^\eps_t + \tilde{H}_2(t,x,D v^\eps) = \eps \Delta v^\eps, & x_{d+1} >0, t>0 \\
v^\eps(0,x) =v_0(x), & x \in \R^{d+1}
\end{cases}
\end{equation}
where $x = (x',x_{d+1}) \in \R^{d+1}$. 
In the previous equation, we do not need to impose any condition since
the Laplacian is strong enough to ensure the existence of solutions that
are continuously differentiable in the space variable $x \in \R^d$
despite the discontinuity of the first order term.  In particular, the
following condition holds at $x_{d+1}=0$,
\begin{equation}\label{e:k}
 \partial_{x_{d+1}} v^\eps (t,x',0+) = \partial_{x_{d+1}} v^\eps
(t,x',0-).
\end{equation}
In this specific singular perturbation problem, the limit is
identified by remarking that \eqref{e:k} is a Kirchoff condition and
that consequently we can pass to the limit using relaxed solutions;
more precisely, the limit of $v^\eps$ corresponds to a relaxed
solution associated with this specific generalized junction
condition. But the main theorem tells us that the limit thus
corresponds to a flux-limited solution associated with a flux limiter
$A$ that is explicitly given by a formula (see
Definition~\ref{defi:al}). Looking closely at this formula, we can prove
that it corresponds to the maximal Ishii solution of the limit
equation recently identified by Barles, Briani and Chasseigne
\cite{bbc1,bbc2}.
\begin{thm}[The vanishing viscosity limit selects the maximal Ishii
  solution]\label{thm:vvl-evol-tilde}
Assume
\[ \begin{cases}
  \tilde{H}_i \text{ continuous }\\
  \forall (t,x') \in (0,T) \times \R^d, \forall \lambda \in \R,
  \{ p=(p',p_i) \in \R^{d+1}: \tilde{H}_i (t,x',p) \le \lambda \} \text{ convex } \\
  \displaystyle \lim_{|p|\to +\infty} \inf_{(t,x') \in (0,T) \times
    \R^d}\tilde{H}_i (t,x',p) = +\infty
\end{cases}
\]
and $v_0$ is uniformly continuous in $\R^{d+1}$.  Let $v^\eps$ be
solution of \eqref{eq:vvl} such that there exists $C>0$ (independent
of $\eps$) such that $|v^\eps (t,x)-v_0(x)|\le Ct$ for all
$(t,x) \in (0,T) \times J$. Then $v^\eps$ converges towards the
maximal Ishii solution $v$ of
\begin{equation}\label{eq:hjtilde} 
\begin{cases} 
v_t + \tilde{H}_1 (t,x,Dv) = 0, &  x_{d+1} <0 , t>0\\ 
v_t + \tilde{H}_2 (t,x,Dv) = 0, &  x_{d+1} >0 , t>0
\end{cases} 
\end{equation}
subject to the initial condition
\[ v(0,x) = v_0 (x), \quad x \in \R^{d+1}. \]
\end{thm}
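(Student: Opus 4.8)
The plan is to recast the vanishing viscosity problem for \eqref{eq:vvl} as a degenerate parabolic equation on a two-branch junction ($N=2$) and then apply Theorem~\ref{thm:class}. First I would identify $\Gamma=\{x_{d+1}=0\}$ and view the two half-spaces $\{x_{d+1}<0\}$ and $\{x_{d+1}>0\}$ as the two branches $J_1,J_2$ by an orientation-reversing change of variables on one side, so that the outward normal derivatives $\partial_1 v,\partial_2 v$ at $\Gamma$ correspond to $\mp\partial_{x_{d+1}} v$. The viscous equation $v^\eps_t+\tilde H_i(t,x,Dv^\eps)=\eps\Delta v^\eps$ fits the model case of Assumption~(F) with $\sigma_i=\sqrt{2\eps}\,\mathrm{Id}$: however, since the Laplacian does \emph{not} degenerate along $\Gamma$, the role of $\Gamma$ here is not that of a degenerating interface for $v^\eps$ itself but rather the locus at which the two equations meet. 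The key structural observation, stated in the excerpt, is that the natural transmission relation \eqref{e:k} is precisely a Kirchoff condition in the sense of Example~\ref{ex:kirchoff}, namely $L_{\mathrm{Kir}}(\partial_1 v,\partial_2 v)=-(\beta_1\partial_1 v+\beta_2\partial_2 v)$ with appropriate weights, so that $v^\eps$ solves a junction problem with $L=L_{\mathrm{Kir}}$ in the relaxed sense.

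Next I would pass to the limit $\eps\to 0$. Using the uniform Lipschitz-in-time bound $|v^\eps(t,x)-v_0(x)|\le Ct$ together with the coercivity of $\tilde H_i$ (the third hypothesis), I would derive uniform-in-$\eps$ local gradient bounds, so that by Ascoli the half-relaxed limits $\overline v=\limsup^* v^\eps$ and $\underline v=\liminf_* v^\eps$ are finite and the stability theory for viscosity solutions applies. The standard half-relaxed limit argument shows that $\overline v$ is a relaxed sub-solution and $\underline v$ a relaxed super-solution of the junction problem \eqref{eq:dp-gen} associated with the limiting Hamiltonians $\tilde H_1,\tilde H_2$ (which now satisfy (F4) with no second-order term) and the Kirchoff junction function $L=L_{\mathrm{Kir}}$. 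Here one must be careful that the viscous term $\eps\Delta$ disappears in the limit and that the Kirchoff condition survives the relaxation; this is where the relaxed notion of solution is essential, since the junction condition $L=0$ may be lost while one of the bulk equations holds.

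Now Theorem~\ref{thm:class} does the decisive work: every $L_{\mathrm{Kir}}$-relaxed sub-solution (resp.\ super-solution) is an $A_L$-flux-limited sub-solution (resp.\ super-solution), where the effective flux limiter $A=A_{L_{\mathrm{Kir}}}$ is given explicitly by Definition~\ref{defi:al}. Thus $\overline v$ and $\underline v$ are flux-limited sub- and super-solutions of \eqref{eq:hjtilde} for the \emph{same} flux limiter $A$, and both satisfy the initial condition $v(0,\cdot)=v_0$. Invoking the comparison principle for flux-limited solutions (available in the two-domain case, as noted in the excerpt, and in \cite{im14}) gives $\overline v\le\underline v$, hence $\overline v=\underline v=:v$, which yields locally uniform convergence of $v^\eps$ to a flux-limited solution $v$ of \eqref{eq:hjtilde}. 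The final step is to identify this $v$ with the maximal Ishii solution of Barles--Briani--Chasseigne \cite{bbc1,bbc2}: I would compute $A_{L_{\mathrm{Kir}}}$ from Definition~\ref{defi:al} and compare it, via the alternative representations of Proposition~\ref{prop:al}, with the flux limiter $\overline A$ that characterizes the maximal Ishii solution in their framework, showing the two coincide.

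The main obstacle I anticipate is precisely this last identification: extracting the explicit value of $A_{L_{\mathrm{Kir}}}$ and matching it against the Barles--Briani--Chasseigne characterization of the maximal Ishii solution requires a careful analysis of the formula in Definition~\ref{defi:al} in the Kirchoff case—determining when $L_{\mathrm{Kir}}(A_0,p_1^0,p_2^0)\le 0$ and, when it is positive, solving for the unique $\lambda$ together with the slopes $p_i^+$ on the non-decreasing branches of $\tilde H_i$. A secondary difficulty is establishing the uniform gradient estimates for $v^\eps$ that are independent of $\eps$ near $\Gamma$, which must be robust enough to survive the discontinuity of the Hamiltonian across the interface; the assumed bound $|v^\eps(t,x)-v_0(x)|\le Ct$ is the key ingredient that feeds the coercivity argument here.
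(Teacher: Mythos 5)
Your overall strategy is exactly the paper's: rewrite \eqref{eq:vvl} as a two-branch junction problem with $H_1(t,x,p',p_{d+1})=\tilde H_1(t,x,p',-p_{d+1})$ and $H_2=\tilde H_2$, observe that \eqref{e:k} becomes the Kirchoff condition $L_e(p_0,p_1,p_2)=-p_1-p_2$, pass to half-relaxed limits, apply Theorem~\ref{thm:class} to turn the relaxed sub/super-solutions into $A_{L_e}$-flux-limited ones, conclude locally uniform convergence by the comparison principle of \cite{im14}, and finally match the effective flux limiter with the one characterizing the maximal Ishii solution.

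Two remarks. First, the ``secondary difficulty'' you raise is not one: the half-relaxed limit method requires no gradient estimates and no Ascoli argument; local uniform boundedness of $v^\eps$ (from $|v^\eps-v_0|\le Ct$ and the uniform continuity of $v_0$) suffices for $\overline v$ and $\underline v$ to be finite, and the same bound forces $\overline v(0,\cdot)=\underline v(0,\cdot)=v_0$, which is all the comparison principle needs. Second, and more importantly, the step you defer as ``the main obstacle'' is precisely where the substance of the paper's proof lies. The paper first specializes Definition~\ref{defi:al} to $L_e$ (Corollary~\ref{cor:ae}): $A_e=A_0$ if $p_1^0+p_2^0\ge 0$, and otherwise $A_e$ is the unique $\lambda\ge A_0$ admitting slopes $p_i^{+,e}\ge p_i^0$ with $H_i(p_i^{+,e})=\lambda$ and $p_1^{+,e}+p_2^{+,e}=0$. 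It then recalls from \cite[Proposition~4.1]{im14} that the maximal Ishii solution is the $A_I^-$-flux-limited solution, where $A_I^-=A_0$ when $\pi_1^0+\pi_2^0\ge 0$ and $A_I^-=\max(A_0,A^*)$ with $A^*=\max_{q\in[\pi_2^0,-\pi_1^0]}\min(H_1(-q),H_2(q))$ otherwise. The identity $A_e=A_I^-$ is then proved by a three-case discussion of whether the graphs of $q\mapsto H_2(q)$ and $q\mapsto H_1(-q)$ cross on $[\pi_2^0,-\pi_1^0]$ ($H_2$ above without crossing, crossing, $H_1(-\cdot)$ above without crossing), each case reducing to comparing $\pi_i^+(A_0)$ with the minimizers $\pi_i^0$. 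Without this computation the theorem is not established, so your proposal should be regarded as a correct reduction, following the paper's route, together with an unexecuted but correctly identified final verification.
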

\begin{rem}
  The function $v$ is associated with the unique flux-limited solution
  $u$ of the previous Hamilton-Jacobi equation for some flux limiter
  $A_I^-(t,x',p')$ that was identified in a previous work (see
  \eqref{eq:ai-} in Proposition~\ref{prop:ishii} below,
  corresponding to \cite[Proposition~4.1]{im14}).  The functions $v$
  and $u$ satisfy the following equality:
  $v(t,x',x_{d+1}) = u (t,x',|x_{d+1}|)$, see
  Theorem~\ref{thm:vvl-evol} in Section~\ref{sec:vvl}.
\end{rem}

\subsection{Review of literature}

Semi-linear uniformly parabolic equations on compact networks were
studied in \cite{vb88,vbn96,kms07,pb04} where uniqueness, existence,
strong maximum principle among other results were proved to be true.

The first results for Hamilton-Jacobi equations on networks were
obtained in \cite{schieborn} for eikonal equations.  Some years later,
the results were extended in \cite{sc13,acct,imz13}. Many new results
were obtained since then, see for instance \cite{im13,im14} and references
therein. 

In \cite{bbc1,bbc2}, the authors study regional control, i.e. control
with dynamics and costs which are regular on either side of a
hyperplane but with no compatibility or continuity assumption along
the hyperplane. They identify the maximal and minimal Ishii solutions
as value functions of two different optimal control problems. They
also use the vanishing viscosity limit on a 1D example in order to
prove that the two Ishii solutions can be different. Moreover, the
authors ask about the vanishing viscosity limit in the general case.  

In \cite{cms13}, the authors study the vanishing viscosity limit
associated with Hamilton-Jacobi equations posed on a junction (the
simplest network, see above). The main difference with our results is
that the authors impose some compatibility conditions on Hamiltonians.
In particular, this allows them to construct viscosity solutions which
satisfy Kirchoff conditions in a strong sense. We proceed in a
different setting and in a different way: no compatibility conditions
on Hamiltonians are imposed, and Kirchoff conditions are understood in
a relaxed sense, which is stable under local uniform convergence (and
even relaxed semi-limits). We then use Theorem~\ref{thm:class} to
prove that imposing Kirchoff conditions reduce to the study of a
flux-limited problem (for which uniqueness holds true).

In his lectures at Coll\`ege de France \cite{lions-cdf}, Lions also
treats problems related to Hamilton-Jacobi equations with
discontinuities. After posting a first version of this paper, Lions
and Souganidis \cite{ls} wrote a note about a new approach for
Hamilton-Jacobi equations posed on junctions with coercive
Hamiltonians that are possibly not convex. 

We previously mentioned that, since the first version of this paper
were posted, Guerand and Monneau studied independently effective
non-linear boundary conditions in the non-convex case. On the one hand
Guerand \cite{guerand} studied the case $N=1$ in the 1D setting, which
amounts to studying first order non-convex Hamilton-Jacobi equations
with nonlinear boundary conditions of Neumann type.  On the other hand
Monneau \cite{monneau} mentioned to us that he studies effective
junction conditions for non-convex Hamilton-Jacobi equations posed on
multi-dimensional junctions.

As far as effective boundary conditions are concerned, we would like
to mention that there are some results for motion of interfaces by
Elliott, Giga and Goto \cite{egg} and for conservation laws by
Andreianov and Sbihi \cite{as1,as2,as3}.

To finish with, the link between the theory developed in
\cite{bbc1,bbc2} and flux-limited solutions from \cite{im13,im14} is
explored in \cite{bbcim}. In particular, \cite{bbcim} contains
alternative proofs in the two-domain case of the comparison principle
from \cite{im14} and of the vanishing viscosity limit obtained in the
present work.

\subsection{Organization of the paper}
In Section~\ref{sec:visc}, the notions of relaxed and flux-limited
solutions are presented and their properties studied. In
Section~\ref{sec:reduced}, it is proved that in order to check that a
function is a flux-limited solution, the set of test functions can be
reduced. In Section~\ref{sec:class}, we prove the main result of this
paper, Theorem~\ref{thm:class}. Section~\ref{sec:vvl} is devoted to
the study of the vanishing viscosity limit. The last section
(Section~\ref{sec:ld}) is devoted to the proof of a known result about
large deviations using the main result of this work.

\subsection{Notation}

A distance is naturally associated with the junction $J$: for
$x \in J_i$ and $y \in J_j$,
\[ d(x,y) = \begin{cases} |x'-y'| + |x_i - y_i| & \text{ if } i=j,\\ 
|x'-y'| + x_i + y_j & \text{ if } i \neq j.
 \end{cases} \] 
The open ball $B_r(t_0,x_0)$ centered
at $(t_0,x_0) \in \R \times J$ is defined as $(t_0-r,t_0+r) \times \{ y \in J: d(y,x_0) < r\}$.
\bigskip

The junction hyperplane $\Gamma$ is the common boundary of $J_i$: we
have $\Gamma = \partial J_i$. We identify $\Gamma$ with $\R^d$ and we
do not write the injection of $\R^d$ into $J_i$: $x' \mapsto (x',0)$.
For this reason, we write indisctinctively $x=(x',0) \in \Gamma$ and
$x' \in \Gamma$.  \bigskip

The Hamiltonian $H_i(t,x',p',p_i)$ is defined for $x' \in \Gamma$ and
$p \in \R^{d+1}$.  The minimal minimizer of
$p_i \mapsto H_i(t,x',p',p_i)$ is denoted by $\pi_i^0(t,x',p')$. The
functions $H_i^-$ and $H_i^+$ are defined as follows
\begin{eqnarray*}
 H_i^- (t,x',p',p_i) = \begin{cases} H_i (t,x',p',p_i) & \text{ if } p_i \le \pi_i^0(t,x',p') \\
H_i (t,x',p',\pi_i^0 (t,x',p')) & \text{ if } p_i \ge \pi_i^0(t,x',p')
\end{cases} \\
 H_i^+ (t,x',p',p_i) = \begin{cases} H_i (t,x',p',p_i) & \text{ if } p_i \ge \pi_i^0(t,x',p') \\
H_i (t,x',p',\pi_i^0 (t,x',p')) & \text{ if } p_i \le \pi_i^0(t,x',p').
\end{cases}
\end{eqnarray*}
\begin{figure}
\includegraphics[height=4cm]{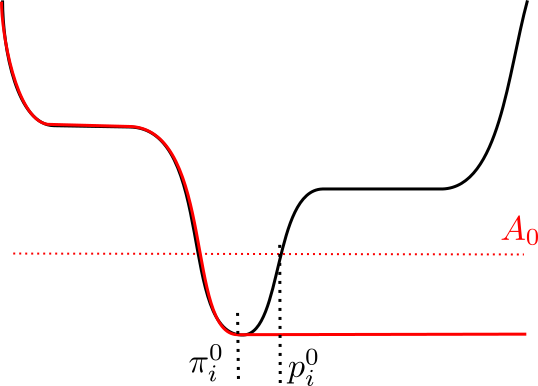} \hspace{4mm}
\caption{Non-increasing part $H_i^-$ of a Hamiltonian $H_i$. The
  Hamiltonian is in black, the monotone part in red. The tangent
  variables $(t,x',p')$ are not shown. In this example, the minimum of $H_i$ is lower than $A_0$. }
\label{fig:2} 
\end{figure}

\begin{figure}
\includegraphics[height=4cm]{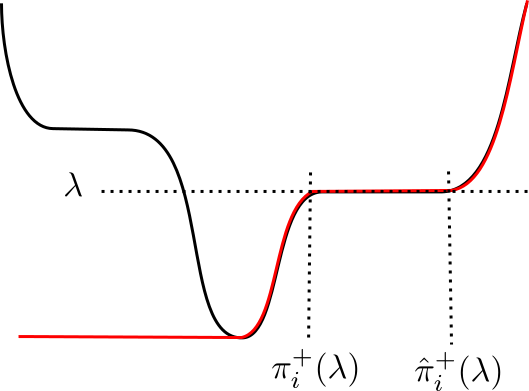}
\caption{Non-decreasing part $H_i^+$  of a Hamiltonian $H_i$. The
  Hamiltonian is in black, the monotone part in red. The tangent
  variables $(t,x',p')$ are not shown.}
\label{fig:3} 
\end{figure}

For $\lambda \ge \min_{p_i \in \R} H_i (t,x',p',p_i)$, the functions $\pi_i^+$ and $\hat{\pi}_i^+$ 
are defined by
\begin{eqnarray}
\label{def:pi+}
\pi_i^+ (t,x',p',\lambda) &= \inf \{  p_i : H_i (t,x',p',p_i) = H_i^+ (t,x',p',p_i) =
\lambda \}, \\
\label{def:pi+hat}
\hat{\pi}_i^+ (t,x',p',\lambda) & = \sup \{ p_i : H_i (t,x',p',p_i) = H_i^+
(t,x',p',p_i) = \lambda \}
\end{eqnarray}
The function $A_0$ is defined for $t,x',p' \in \R^d$ by \eqref{a0}. We recall that 
\[ A_0(t,x',p') = \max_{i = 1,\dots,N} \min_{p_i \in \R} H_i (t,x',p',p_i) . \]
The functions $p_i^0(t,x',p')$ are defined as 
\[ p_i^0 (t,x',p')= \pi_i^+ (t,x',p',A_0(t,x',p')).\]


\section{Relaxed and flux-limited solutions}
\label{sec:visc}

\subsection{Test functions}

In order to define relaxed and flux-limited solutions, the set of test
functions is to be made precise.
\begin{defn}[Test functions]\label{defi:test}
  A function $\phi : (0,T) \times J \to \R$ is a \emph{test function}
  for \eqref{eq:dp-gen} if it is continuous in $(0,T) \times J$,
  $\phi|_{(0,T) \times J_i}$ is $C^1_t \cap C^1_{x}$ and
  $\phi |_{(0,T) \times J_i^*}$ is $C^2_x$.
\end{defn}

We classically say that a function $\phi$ touches another function $u$
at a point $(t,x)$ from below (respectively from above) if $u \ge
\phi$ (respectively $u \le \phi$) in a neighbourhood of $(t,x)$ with
equality at $(t,x)$.

\subsection{Relaxed  solutions}

\begin{defn}[$L$-relaxed  solutions] \label{defi:relaxed} A function
  $u\colon (0,T) \times J \to \R$ is an  \emph{$L$-relaxed sub-solution}
  (resp. \emph{$L$-relaxed super-solution}) of \eqref{eq:dp-gen} if it is upper
  semi-continuous (resp. lower semi-continuous) and for all test functions 
  $\phi$ touching $u$ from above (resp. from below) at $(t,x) \in (0,T) \times J_i$, we have
\begin{eqnarray*}
& \phi_t + F_i(t,x,D\phi,D^2\phi) \le 0 \text{ at } (t,x) \\
(\text{resp. } & \phi_t + F_i(t,x,D\phi,D^2 \phi) \ge 0 \text{ at } (t,x) ) 
\end{eqnarray*}
if $x \notin \Gamma$, and
 \begin{eqnarray*}
 & \begin{cases}
 \text{either } \phi_t + H_i (t,x,D\phi) \le 0 \text{ at } (t,x) \text{ for some } i \in \{1,\dots,N\}, \\
 \text{or }  L (-\phi_t, \partial_1 \phi,\dots,\partial_N \phi,t,x,D'\phi) \le 0 \text{ at } (t,x)
 \end{cases} \\
 \Bigg(\text{resp. } 
 & \begin{cases}
 \text{either } \phi_t + H_i (t,x,D\phi) \ge 0 \text{ at } (t,x) \text{ for some } i \in \{1,\dots,N\}, \\
 \text{or } L (-\phi_t, \partial_1 \phi,\dots,\partial_N \phi,t,x,D'\phi) \ge 0 \text{ at } (t,x)
 \end{cases} \Bigg)
 \end{eqnarray*}
if $x \in \Gamma$. 
\end{defn}
The following observation is important for stability and the reduction
of the set of test functions. The proof contained in \cite{im13} can
be easily extended to generalized junction conditions. We give a
short proof for the reader's convenience.
\begin{lem}[``Weak continuity'' of relaxed sub-solutions]\label{lem:weak-cont}
  Assume \emph{(F)} and \emph{(L)}.  Let $u: (0,T) \times J \to \R$ be an $L$-relaxed
  sub-solution of \eqref{eq:dp-gen}. Then for all
  $i \in \{1,\dots,N\}$, and $x=(x',0) \in \Gamma$,
\[ u (t,x) = \limsup_{(s,y) \to (t,x), y \in J_i^*} u(s,y).\]
\end{lem}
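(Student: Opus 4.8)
The plan is to establish the two inequalities separately. Writing $\ell_i:=\limsup_{(s,y)\to(t,x),\,y\in J_i^*}u(s,y)$, the bound $u(t,x)\ge\ell_i$ is immediate from upper semi-continuity. For the reverse bound I argue by contradiction: suppose $M:=u(t,x)>\ell_i$ and set $\gamma:=M-\ell_i>0$. From the definition of $\ell_i$ and upper semi-continuity I fix $r>0$ (with $\overline{B_r}(t,x)\subset(0,T)\times J$) such that $u\le M+1$ on $\overline{B_r}(t,x)$ while $u\le M-\frac{\gamma}{2}$ on $\overline{B_r}(t,x)\cap((0,T)\times J_i^*)$. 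This branch-$i$ gap, valid up to $\Gamma$, is the only place the assumption $M>\ell_i$ is used.

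Next I build a test function whose normal derivative on $J_i$ is as negative as I like while the others stay bounded. Fix $\epsilon\in(0,r^2)$ and set, on each branch $J_j$,
\[
\phi(s,y)=M+\frac1\epsilon\big(|s-t|^2+|y'-x'|^2\big)+g_j(y_j),
\]
with $g_i(y_i)=-K\min(y_i,a)$, $a=\frac{\gamma}{4K}$ (a steep, \emph{capped}, negative slope), and $g_j(y_j)=c_j\,y_j$ for $j\ne i$, where the $c_j>0$ are \emph{fixed} constants to be chosen large. Instead of requiring $\phi$ to touch $u$ from above at $(t,x)$ — which may fail because of a time- or $\Gamma$-cusp of $u$ — I maximise $u-\phi$ over $\overline{B_r}(t,x)$ and work at a maximiser $(\bar s,\bar y)$, where $\phi+\max(u-\phi)$ touches $u$ from above. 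Since $(u-\phi)(t,x)=0$ while, by the gap and the cap, $u-\phi\le-\frac{\gamma}{4}$ on $\overline{B_r}\cap((0,T)\times J_i^*)$, the maximiser is not interior to $J_i$. On each $J_j^*$ ($j\ne i$) a maximiser would, via the interior inequality $\phi_t+F_j(\cdot,D\phi,D^2\phi)\le0$, be excluded once $c_j$ is large: indeed $|D^2\phi|\le 2/\epsilon$ and $|\bar y|$ are bounded, so by the coercivity provided by (F3), (F4) the term $F_j$ with normal slope $c_j$ exceeds $|\phi_t|\le 2r/\epsilon$, a contradiction. Finally $\epsilon<r^2$ (together with the gap and $c_j$ large) keeps the maximiser off $\partial B_r$. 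Hence $\bar y\in\Gamma$, interior to $B_r$.

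At the junction point $(\bar s,\bar y)$ the relaxed sub-solution inequality asserts that either $\phi_t+H_k\le0$ for some $k$, or $L\le0$; I show both fail. Here $\partial_i\phi=-K$, $\partial_j\phi=c_j$ for $j\ne i$, and $\phi_t$ is bounded by $2r/\epsilon$. Choosing each $c_j$ so large (possible by coercivity) that $H_j(\cdot,c_j)>2r/\epsilon\ge-\phi_t$ at the contact, and then letting $K\to+\infty$ so that $H_i(\cdot,-K)\to+\infty$ by coercivity, one gets $\phi_t+H_k>0$ for every $k$: the first alternative fails. For the second, $\min_{1\le k\le N}\partial_k\phi=-K\to-\infty$, so (L4) forces $L\to+\infty$; and since $-\phi_t$ and the $c_j$ remain bounded, $\max_{0\le k\le N}$ of the arguments of $L$ stays bounded, so (L5) is never activated to pull $L$ down. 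Thus $L>0$ for $K$ large, the second alternative fails as well, contradicting that $u$ is an $L$-relaxed sub-solution. Therefore $u(t,x)\le\ell_i$, and since $i$ is arbitrary the lemma follows.

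The delicate point — and the reason (F3) and (L4) must be used together — is the opposing treatment of the branches: the slope on $J_i$ must be driven to $-\infty$ to defeat the junction alternative through (L4), while the slopes on the other branches must be held \emph{bounded} (so as not to trigger (L5)) yet be large enough both to defeat their own branch equations through coercivity and to confine the maximiser to $\Gamma$. Making this balance precise, together with the (routine) verification that the maximiser exists, is interior to $B_r$, and lies on $\Gamma$, is the crux; once the hierarchy of parameters ($\epsilon$ fixed small, then the $c_j$ large, then $K\to+\infty$) is fixed, the contradiction follows as above.
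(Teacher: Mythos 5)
Your argument is correct and reaches the paper's contradiction by the same mechanism --- a steep negative normal slope on branch $i$, made admissible by the gap $u(t,x)>\ell_i$, defeats the junction alternative through (L4), while coercivity (F3) defeats the branch equations and forces the slopes $c_j$ on the other branches --- but you produce the contact point on $\Gamma$ by a genuinely different device. The paper applies the density theorem of Clarke--Ledyaev--Stern--Wolenski to $-u$ restricted to $(0,T)\times\Gamma$: it first moves to a \emph{nearby} point $(t_0,x_0)$ at which $u|_\Gamma$ admits a $C^1$ tangential test function $\Psi$ from above, checks that the gap survives the displacement, and only then adds the linear normal terms $p_jx_j$. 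You instead stay at the \emph{original} point, replace the unknown tangential test function by the explicit penalisation $\frac1\epsilon\big(|s-t|^2+|y'-x'|^2\big)$, and let the contact point be selected by maximising $u-\phi$ over $\overline{B_r}$; the gap (branch $i$), coercivity (branches $j\neq i$ and the spatial boundary) and $\epsilon$ small (time and tangential boundary) then confine the maximiser to $\Gamma\cap B_r$. This is more elementary --- no nonsmooth-analysis input --- at the price of a contact point whose tangential and time derivatives are only bounded by $O(1/\epsilon)$ rather than converging to a fixed superdifferential; that is harmless because the contradiction only needs $-\phi_t$, $D'\phi$ and the $c_j$ to stay bounded while $K\to+\infty$. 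Your explicit exclusion of the alternative ``$\phi_t+H_k\le0$ for some $k$'' at the junction (via coercivity in both the $-K$ and $c_j$ slots) is also welcome; the paper passes over it silently.

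Two small repairs are needed. First, $g_i(y_i)=-K\min(y_i,a)$ has a kink at $y_i=a$, so $\phi$ is not a test function in the sense of Definition~\ref{defi:test}; replace $\min(\cdot,a)$ by a smooth nondecreasing cap equal to $y_i$ for $y_i\le a/2$ and constant for $y_i\ge a$. This changes nothing, since on $J_i^*$ you never invoke the equation: the exclusion there is the pure barrier inequality $u-\phi\le-\gamma/4<0$. Second, $\epsilon<r^2$ does not quite clear the time boundary, where the cap only gives $u-\phi\le 1+\frac{\gamma}{4}-\frac{r^2}{\epsilon}$ (and the spatial boundary needs $\epsilon<r^2/4$ on the tangential part); take $\epsilon$ small depending on $r$ and $\gamma$, which is consistent with the parameter hierarchy you already state.
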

\begin{proof}
Let $i \in \{1,\dots, N\}$. 
Since $u$ is upper semi-continuous, we have for all $(t,x) \in (0,T)\times \Gamma$,
\[ u (t,x) \ge \limsup_{(s,y) \to (t,x), y \in J_i^*} u(s,y) =:U_i(t,x).\]
Remark that the function $U_i:(0,T) \times \Gamma \to \R$ is upper semi-continuous. 
In order to prove that $u=U_i$ in $(0,T) \times \Gamma$, we assume that there exists
$(t_*,x_*) \in (0,T) \times \Gamma$ such that 
\begin{equation}\label{giga}
 u (t_*,x_*) \ge U_i(t_*,x_*) + \delta
\end{equation}
for some $\delta >0$. 

The density theorem \cite[Theorem~3.1]{clsw} can be applied to the
restriction of $-u$ to $(0,T) \times \Gamma$ around $(t_*,x_*)$.
Roughly speaking, this theorem claims that the proximal
subdifferential (which is a subset of the viscosity subdifferential)
is nonempty in a dense set. This result even ensures that there exists
a point $(t_0,x_0) \in (0,T) \times \Gamma$ such that
$(t_0,x_0,-u(t_0,x_0))$ is as close as desired to
$(t_*,x_*,-u (t_*,x_*))$ and there exists a viscosity subdifferential
of $-u$ at $(t_0,x_0)$.  More precisely, for all $\eps >0$, there
exists a $C^1$ function $\Psi : (0,T) \times \Gamma \to \R$ and
$(t_0,x_0) \in (0,T) \times \Gamma$ such that $\Psi$ strictly touches
$u$ from above at $(t_0,x_0) \in B_r (t_*,x_*) \cap (0,T) \times \Gamma$
for some $r>0$ and
\[ (t_0,x_0) \in B_\eps (t_*,x_*) \quad \text{ and } \quad 
-u (t_*,x_*) -\eps \le -u (t_0,x_0) \le -u (t_*,x_*).\]
In particular, $u (t_0,x_0) \ge u (t_*,x_*)$. 

Moreover, since $U_i$ is upper semi-continuous, we can choose $\eps$ small enough
in order to ensure that $U_i(t_0,x_0) \le U_i (t_*,x_*) + \delta/2$.  

We now get from \eqref{giga} that
\begin{equation}\label{star} 
  u (t_0,x_0) \ge \limsup_{(s,y) \to (t_0,x_0), y \in J_i^*} u(s,y) + \delta/2.
\end{equation}

Since the test function strictly touches $u$ at $(t_0,x_0)$, we have  $\Psi -u \ge \delta_1 >0$ in a
neighbourhood (with respect to $(0,T) \times J$) of
$\partial B_r(t_0,x_0) \subset (0,T) \times \Gamma$.  We now consider
the test function $\Phi (t,x) = \Psi (t,x') + p_j x_j$ for $x \in J_j$
with $p_j>0$ if $j \neq i$ and $p_i <0$. Thanks to \eqref{star},
$|p_i|$ can be chosen arbitrarily large.  We now use the coercivity of
the $F_j$ (see (F3)) to show that for $\min_j |p_j|$ large enough,
$\Phi$ touches $u$ from above at $(t_0,x_0)$. But this implies that
\[ L (-\partial_t \Phi (t_0,x_0),p_1,\dots,p_N,t_0,x_0',D' \Phi
(t_0,x_0)) \le 0\]
which contradicts (L4) since the $\min_{k =1,\dots, N} p_k = p_i \to -\infty$.  The
proof is now complete.
\end{proof}

\subsection{Stability and existence}

The following results related to stability of relaxed sub- and
super-solutions are expected; even more, relaxed solutions are defined 
in such a way that they satisfy such stability properties.  

In order to state the first stability result, we recall the definition
of upper semi-continuous envelope $u^*$ (resp. lower semi-continuous
envelope $u_*$) of a function $u: (0,T) \times J \to \R$,
\[ u^* (t,x) = \limsup_{(s,y) \to (t,x)} u(s,y), \quad 
 u_* (t,x) = \liminf_{(s,y) \to (t,x)} u(s,y).\]
\begin{prop}[Stability of relaxed solutions - I]\label{prop:stab1}
  Assume \emph{(F)} and \emph{(L)}. If $(u_\alpha)_\alpha$ is a family
  of relaxed sub-solutions (resp. relaxed super-solutions) of
  \eqref{eq:dp-gen} which is locally uniformly bounded from above
  (resp. from below), then the upper semi-continuous (resp. lower
  semi-continuous) envelope of $\sup_\alpha u_\alpha$
  (resp. $\inf_\alpha u_\alpha$) is a relaxed sub-solution
  (resp. relaxed super-solution) of \eqref{eq:dp-gen}.
\end{prop}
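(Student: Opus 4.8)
The plan is to prove the sub-solution statement; the super-solution case is entirely symmetric (replace $\sup$ by $\inf$, upper by lower semi-continuity, and reverse every inequality). Write $u := (\sup_\alpha u_\alpha)^*$, which is finite and upper semi-continuous thanks to the local uniform bound from above. Fix a test function $\phi$ touching $u$ from above at some $(t_0,x_0) \in (0,T)\times J$; the goal is to verify the relaxed sub-solution inequalities of Definition~\ref{defi:relaxed} at $(t_0,x_0)$.

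\textbf{Reduction to strict touching and production of maximizers.} First I would replace $\phi$ by $\phi_\eps := \phi + \eps \psi$, where $\psi \ge 0$ vanishes only at $(t_0,x_0)$, has vanishing first-order derivatives there, and is a genuine test function. A convenient choice near $\Gamma$ is $\psi(t,x) = (t-t_0)^2 + |x'-x_0'|^2 + x_i^2$ for $x \in J_i$, which is polynomial on each branch, continuous across $\Gamma$, and satisfies $\psi_t = D'\psi = \partial_i \psi = 0$ at $(t_0,x_0)$. On a small closed ball $\bar B_r(t_0,x_0)$, the perturbed function $\phi_\eps$ touches $u$ \emph{strictly} from above. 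The standard maximization argument — using only that $u$ is the upper semi-continuous envelope of $\sup_\alpha u_\alpha$ together with the local bound — then produces indices $\alpha_n$ and points $(t_n,x_n) \in \bar B_r(t_0,x_0)$ with $(t_n,x_n) \to (t_0,x_0)$, with $u_{\alpha_n}(t_n,x_n) \to u(t_0,x_0)$, and with $(t_n,x_n)$ a local maximum of $u_{\alpha_n} - \phi_\eps$ that is interior to the ball for $n$ large.

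\textbf{Interior points.} If $x_0 \in J_i^*$, then points sufficiently close to $x_0$ in the junction distance $d$ lie in $J_i^*$, so $x_n \in J_i^*$ for $n$ large. The sub-solution property of $u_{\alpha_n}$ then gives $\partial_t \phi_\eps + F_i(t_n,x_n,D\phi_\eps,D^2\phi_\eps) \le 0$ at $(t_n,x_n)$; letting $n \to \infty$ by continuity \emph{(F1)} and then $\eps \to 0$ yields $\phi_t + F_i(t_0,x_0,D\phi,D^2\phi) \le 0$, as required.

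\textbf{Junction points.} Now suppose $x_0 \in \Gamma$, and split according to the position of the maximizers $x_n$. If infinitely many $x_n$ lie in a fixed $J_i^*$, the sub-solution property of $u_{\alpha_n}$ gives the interior inequality at $(t_n,x_n)$; passing to the limit and invoking \emph{(F4)} — so that on $\Gamma$ the nonlinearity $F_i$ reduces to $H_i$ and loses its dependence on the (possibly unbounded) Hessian — produces $\phi_t + H_i(t_0,x_0,D\phi) \le 0$, i.e. the ``either'' alternative of the junction condition. If instead $x_n \in \Gamma$ for all large $n$, the relaxed sub-solution property of $u_{\alpha_n}$ at $(t_n,x_n) \in \Gamma$ gives, for each $n$, either $\partial_t \phi_\eps + H_{j(n)}(\cdots) \le 0$ for some branch $j(n)$, or $L(-\partial_t\phi_\eps,\partial_1\phi_\eps,\dots,\partial_N\phi_\eps,\cdots) \le 0$. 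Since the branches are finite in number, a pigeonhole extraction isolates a subsequence along which either a single fixed alternative $\phi_t + H_j \le 0$ holds, or $L \le 0$ holds, for every $n$. Passing to the limit via \emph{(F1)} respectively \emph{(L1)}, and then $\eps \to 0$ (recalling that the first-order derivatives of $\psi$ vanish at $(t_0,x_0)$, so that $\partial_t\phi_\eps, D\phi_\eps, D'\phi_\eps, \partial_i\phi_\eps$ converge to the corresponding derivatives of $\phi$), one obtains either $\phi_t + H_j(t_0,x_0,D\phi) \le 0$ for some $j$, or $L(-\phi_t,\partial_1\phi,\dots,\partial_N\phi,t_0,x_0,D'\phi) \le 0$. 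In every case the relaxed sub-solution inequality holds at $(t_0,x_0)$, which completes the argument. The main obstacle is this junction step: one must simultaneously track a two-level dichotomy — the position of $x_n$ relative to $\Gamma$, and the either/or built into the junction condition — and verify that each relaxed alternative is stable under passage to the limit. The degeneracy assumption \emph{(F4)} is precisely what makes an interior inequality pass into a junction ($H_i$) inequality even though the maximizers approach $\Gamma$ with possibly unbounded Hessians, while continuity of $L$ handles the remaining alternative.
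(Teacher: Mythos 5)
Your proof is correct and takes essentially the same route as the paper's: the paper's own argument is exactly this (reduce to strict contact, extract indices $\alpha_n$ and maximizers $(t_n,x_n)\to(t,x)$, write the viscosity inequalities and pass to the limit), only compressed into two lines. Your version simply spells out the details left implicit there --- the perturbation making the contact strict, the dichotomy on the position of the maximizers relative to $\Gamma$, the pigeonhole over the finitely many alternatives in the relaxed junction condition, and the use of (F1), (F4) and (L1) in the passage to the limit.
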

\begin{proof}
We only treat the sub-solution case since the super-solution one is similar. 
  Let $u$ denote the upper semi-continuous envelope of
  $\sup_\alpha u_\alpha$.  Consider a test function $\phi$ strictly
  touching $u$ from above at $(t,x)$. There then exist a sequence
  $(t_n,x_n) \to (t,x)$ and $\alpha_n$ such that $\phi$ touches
  $u_{\alpha_n}$ from above at $(t_n,x_n)$. Writing the viscosity
  inequalities and passing to the limit yields the desired result.
\end{proof}
In order to state the second stability result, we recall the
definition of upper semi-limit $\bar{u}$ (resp. lower semi-limit
$\underline{u}$) of a family of functions
$u^\eps: (0,T) \times J \to \R$, $\eps>0$,
\[ \bar{u} (t,x) = \limsup_{(s,y) \to (t,x),\eps \to 0} u^\eps(s,y), \quad 
 \underline{u} (t,x) = \liminf_{(s,y) \to (t,x), \eps \to 0} u^\eps(s,y).\]
\begin{prop}[Stability of relaxed solutions - II]\label{prop:stab2}
  Assume \emph{(F)} and \emph{(L)}. If $\{ u^\eps\}_{\eps>0}$ is a
  family of relaxed sub-solutions (resp. relaxed super-solutions) of
  \eqref{eq:dp-gen} which is locally uniformly bounded from above
  (resp. from below), then the relaxed upper limit (resp. relaxed
  lower limit) of $\{u^\eps \}_{\eps >0}$ is a relaxed sub-solution (resp. relaxed
  super-solution) of \eqref{eq:dp-gen}.
\end{prop}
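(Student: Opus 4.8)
The plan is to run the Barles--Perthame method of half-relaxed limits, exactly as in the proof of Proposition~\ref{prop:stab1}, the only genuinely new ingredient being the construction of touching points for a limit of a \emph{continuously indexed} family. First I would treat the sub-solution case, the super-solution case being symmetric (work with $\underline{u}$, touch from below, reverse every inequality). Local uniform boundedness from above guarantees that $\bar u$ is a well-defined, upper semi-continuous $[-\infty,+\infty)$-valued function, hence a legitimate candidate sub-solution, and at points where $\bar u=-\infty$ there is nothing to check. Fix a test function $\phi$ strictly touching $\bar u$ from above at $(t,x)\in(0,T)\times J$ with $\bar u(t,x)$ finite. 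By the standard half-relaxed limit lemma there exist $\eps_n\to 0$ and points $(t_n,x_n)\to(t,x)$ such that $u^{\eps_n}-\phi$ attains a local maximum at $(t_n,x_n)$ and $u^{\eps_n}(t_n,x_n)\to\bar u(t,x)$; equivalently, a vertical translate of $\phi$ touches $u^{\eps_n}$ from above at $(t_n,x_n)$. It then remains to pass to the limit in the viscosity inequalities satisfied by the $u^{\eps_n}$.

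The argument splits according to the location of the limit point. If $(t,x)\in(0,T)\times J_i^*$, then $x_n\in J_i^*$ for $n$ large since $J_i^*$ is open, the touching is interior, and $\phi_t+F_i(t_n,x_n,D\phi,D^2\phi)\le 0$; passing to the limit by the continuity of $F_i$ from (F1) gives $\phi_t+F_i(t,x,D\phi,D^2\phi)\le 0$. If $(t,x)\in(0,T)\times\Gamma$, I would first extract a subsequence along which either all $x_n$ lie on $\Gamma$ or all $x_n$ lie in a single fixed branch $J_j^*$, which is possible because $J=\Gamma\cup\bigcup_{k}J_k^*$ decomposes into finitely many regions. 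In the first subcase, $u^{\eps_n}$ satisfies the junction alternative of Definition~\ref{defi:relaxed} at $(t_n,x_n)\in\Gamma$, so for each $n$ either $L(-\phi_t,\partial_1\phi,\dots,\partial_N\phi,t_n,x_n,D'\phi)\le 0$ or $\phi_t+H_i(t_n,x_n,D\phi)\le 0$ for some $i\in\{1,\dots,N\}$; extracting once more I may assume the same alternative (and the same index $i$ in the second case) holds for all $n$, and pass to the limit using the continuity of $L$ from (L1) and of the $H_i$ from (F4), recovering exactly one of the two alternatives for $\bar u$ at $(t,x)$.

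The delicate subcase, which I expect to be the main obstacle, is when $(t,x)\in(0,T)\times\Gamma$ but $x_n=(x_n',(x_n)_j)\in J_j^*$ with $(x_n)_j\to 0^+$. Here only the interior inequality $\phi_t+F_j(t_n,x_n,D\phi,D^2\phi)\le 0$ is available, yet the target is the branch alternative $\phi_t+H_j(t,x',D'\phi,\partial_j\phi)\le 0$ on $\Gamma$. The first-order data converge because $\phi$ is $C^1$ up to $\Gamma$, so the entire difficulty is to show $\liminf_n F_j(t_n,x_n,D\phi,D^2\phi)\ge H_j(t,x',D'\phi,\partial_j\phi)$. This is exactly what the degeneracy assumptions are engineered for: by (F4) the nonlinearity $F_j$ carries no dependence on the Hessian on $\Gamma$ and equals $H_j$ there, while (F2) quantifies that the influence of the normal second derivative $\partial_j^2\phi$ degenerates like $|(x_n)_j|^2$ as $x_n\to\Gamma$. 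Concretely, I would split off the normal-normal part $\mu_n\, e_{d+1}\otimes e_{d+1}$ of $D^2\phi(t_n,x_n)$, use degenerate ellipticity (F1) to discard its negative part (which only makes $F_j$ larger) and (F2) to bound the cost of its positive part by $C_{j,R}\mu_n^+|(x_n)_j|^2$, and then invoke the continuity of $F_j$ together with (F4) to make the residual dependence on the remaining entries of the Hessian disappear in the limit.

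The crux, and the point deserving genuine care rather than a one-line continuity argument, is the weighted estimate: one must control the product of the (a priori unbounded) second derivatives of $\phi$ with the degenerating weight $|(x_n)_j|^2$ along the particular sequence of maximum points produced by the half-relaxed limit lemma. It is precisely here that (F2) and the structural degeneration of $F_j$ along $\Gamma$ enter decisively, and verifying that this term is negligible is the only step that is not a routine passage to the limit; once it is settled, the branch alternative for $\bar u$ follows and the proof is complete.
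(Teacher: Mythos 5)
Your overall architecture --- half-relaxed limits, touching points $(t_n,x_n)\to(t,x)$ for a subsequence $u^{\eps_n}$, and a trichotomy according to whether the $x_n$ stay on $\Gamma$, stay in some fixed $J_j^*$ with interior limit, or lie in $J_j^*$ while converging to $\Gamma$ --- is exactly the paper's (whose own proof is a four-line sketch of the same scheme), and your first two subcases are handled correctly. The problem is the third subcase, which you rightly single out as the crux but then propose to close with the wrong tool; the step as you describe it would in fact fail. A test function in the sense of Definition~\ref{defi:test} is only $C^1$ up to $\Gamma$ and $C^2$ in $J_j^*$, so neither the tangential Hessian nor the weighted normal second derivative $|(x_n)_j|^2\,\partial_j^2\phi(t_n,x_n)$ need tend to zero. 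Concretely, take $N=1$, $d=0$, $F_1(t,x,p,B)=H(p)-x^2B$ (the paper's own model of Example~\ref{ex:1d}) and a test function with normal part $\phi_1$ satisfying $\phi_1'(x)=x^2\sin(x^{-3})$: then $\phi_1\in C^1([0,\infty))\cap C^2((0,\infty))$ with $\phi_1'(0)=0$, yet $x^2\phi_1''(x)=2x^3\sin(x^{-3})-3\cos(x^{-3})$ has positive accumulation values equal to $3$ as $x\to 0^+$, and nothing in the half-relaxed-limit construction prevents the contact points from sitting near such oscillations. There the interior inequality only yields $\phi_t+H(\phi_1'(0))\le 3$ in the limit, not $\le 0$. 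So the estimate you defer is not merely unverified: it is false for general test functions, and (F2) cannot rescue it, since the error it produces, $C_{j,R}\mu_n^+|(x_n)_j|^2$, is $O(1)$ rather than $o(1)$ when $\mu_n\sim|(x_n)_j|^{-2}$ (and (F2) moreover presupposes a bounded residual Hessian $|B|\le R$, which is likewise not guaranteed).

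The missing ingredient is a preliminary reduction of the class of test functions, not an estimate. The conditions that Definition~\ref{defi:relaxed} imposes on $\Gamma$ involve only $(\phi_t,D\phi)$ at the contact point, so by the standard equivalence between $C^1$ and smooth test functions for first-order conditions (approximate $\phi$ in $C^1$ by a function that is $C^2$ up to $\Gamma$ on each branch, let the maximum point move, pass to the limit in the first-order data) it suffices to verify the junction alternative against test functions with locally bounded Hessian up to $\Gamma$. For those, your delicate subcase becomes genuinely routine: $D^2\phi(t_n,x_n)$ converges along a subsequence to some $B\in\Sd$, and (F1) together with (F4) give $\phi_t+F_j(t,(x',0),D\phi,B)=\phi_t+H_j(t,x',D\phi)\le 0$ directly, with no appeal to (F2) anywhere in this proposition. (The weighted estimate you have in mind, where a product of a blowing-up normal second derivative with $|y_i|^2$ is shown to vanish, does occur in this paper, but in Lemma~\ref{second tec}, where the perturbation is the explicit $\eta/x_i$ and the smallness $\eta=o(x_\eps^i)$ is arranged by hand.)
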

\begin{proof}
  We only treat the sub-solution case since the super-solution one is
  similar.  Consider a test function $\phi$ strictly touching $\bar u$
  from above at $(t,x)$. We can assume that the contact is
  strict. There then exist a sequence $(t_k,x_k) \to (t,x)$ and
  $\eps_k\to 0$ such that $\phi$ touches $u_{n_k}$ from above at
  $(t_k,x_k) \to (t,x)$ as $k \to + \infty$. Either there is a
  subsequence $k_p$ along which $x_{k_p} \in J_i^*$ for some
  $i \in \{1,\dots, N\}$ or $x_k \in \Gamma$ for large $k$'s. Writing
  the viscosity inequalities in both cases and passing to the limit
  yields the desired result.
\end{proof}
The stability properties satisfied by relaxed solutions ensure the
existence of discontinuous relaxed solutions. 
\begin{thm}[Existence of discontinuous relaxed solutions]\label{thm:existence}
  Assume \emph{(F)} and \emph{(L)} and consider $u_0$ uniformly
  continuous. Assume also that for all $R>0$, 
\[ C_R := \sup\{  |F_i(t,x,p,A)| : {i \in \{1,\dots,N\}, t \in (0,T), x \in J, |p|\le R, |A|\le R}\}<+\infty.\]
There exists $u$ such that its upper semi-continuous
  (resp. lower semi-continuous) envelope is a relaxed
  sub-solution (resp. relaxed super-solution) of
  \eqref{eq:dp-gen} such that 
\[ u (0,x) = u_0 (x) \text{ for } x \in J .\]
\end{thm}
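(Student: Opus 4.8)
The plan is to construct $u$ by Perron's method, so the argument splits into producing barriers, forming the Perron envelope, and checking that its envelopes solve the Cauchy problem in the relaxed sense. The elementary but decisive remark is that the global bound $C_R$ turns affine-in-time perturbations of smooth spatial profiles into barriers: if $\psi\colon J \to \R$ is a test-function profile with $|D\psi| + |D^2\psi| \le R$, then for $C \ge C_R$ the function $\psi(x) - Ct$ is a relaxed sub-solution and $\psi(x) + Ct$ a relaxed super-solution of \eqref{eq:dp-gen}. Indeed, on $(0,T)\times J_i^*$ one has $(\psi - Ct)_t + F_i = -C + F_i \le -C + C_R \le 0$, and symmetrically for the super-solution; on $(0,T)\times\Gamma$ one invokes (F4), which identifies $F_i$ restricted to $\Gamma$ with $H_i$ independently of the Hessian, so $-C + H_i \le 0$ there as well. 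Because the relaxed junction condition of Definition~\ref{defi:relaxed} is a \emph{disjunction} (``either one interior inequality, or a sign condition on $L$''), this single interior inequality already certifies the junction condition, so (L) is never invoked for the barriers. This is what makes the nonlinear junction condition harmless at this stage.

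Next I would produce ordered barriers matching the data. Since $u_0$ is uniformly continuous, for every $\delta>0$ there are profiles $\psi^\pm_\delta$ as above, with bounded derivatives $R_\delta$, such that $\psi^-_\delta \le u_0 \le \psi^+_\delta$ and $\|\psi^\pm_\delta - u_0\|_\infty \le \delta$. Setting $w^\pm_\delta = \psi^\pm_\delta \pm C_{R_\delta}\, t$ gives a relaxed sub-solution $w^-_\delta$ below a relaxed super-solution $w^+_\delta$, both equal to $u_0$ up to $\delta$ at $t=0$. I then run Perron's method at fixed $\delta$: let $\mathcal F_\delta$ be the family of relaxed sub-solutions $w$ with $w^-_\delta \le w \le w^+_\delta$, and set $u_\delta = \sup_{w \in \mathcal F_\delta} w$. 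By the stability Proposition~\ref{prop:stab1}, $(u_\delta)^*$ is a relaxed sub-solution, and it still lies below $w^+_\delta$.

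The heart of the argument is to show $(u_\delta)_*$ is a relaxed super-solution, via the usual bump construction. Suppose it fails at some $(t_0,x_0)$ with $t_0>0$: a test function $\phi$ touches $(u_\delta)_*$ from below there and \emph{every} alternative in the relaxed super-solution condition fails strictly. If $x_0 \in J_i^*$ this is the classical one-equation situation. If $x_0 \in \Gamma$ the disjunctive structure works in our favour: the negation forces $\phi_t + H_i < 0$ for all $i$ simultaneously, so by continuity of the $F_i$ (F1) together with (F4) these strict inequalities persist into each branch $J_i^*$ near $x_0$; hence a small vertical push $\phi + \delta'$, localized in a ball, is a genuine relaxed sub-solution there, its junction condition being met through the interior branch. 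One checks $(u_\delta)_* < w^+_\delta$ at $(t_0,x_0)$ — otherwise $\phi$ would touch the super-solution $w^+_\delta$ from below and contradict it directly — so $\max(u_\delta, \phi + \delta')$ stays $\le w^+_\delta$, is a relaxed sub-solution by Proposition~\ref{prop:stab1}, and strictly exceeds $u_\delta$ near $(t_0,x_0)$, contradicting maximality. Thus $u_\delta$ solves \eqref{eq:dp-gen} in the relaxed sense.

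Finally I would recover the exact initial datum and remove the auxiliary parameter. From $w^-_\delta \le u_\delta \le w^+_\delta$ and the continuity of the barriers at $t=0$ one gets $u_0 - \delta \le (u_\delta)_*(0,\cdot) \le (u_\delta)^*(0,\cdot) \le u_0 + \delta$; applying the semi-limit stability Proposition~\ref{prop:stab2} to $\{u_\delta\}$ as $\delta \to 0$ produces $u$ whose upper (resp. lower) semi-continuous envelope is a relaxed sub- (resp. super-) solution, while the squeeze forces $u(0,\cdot)=u_0$. I expect the only genuine obstacle to be this matching of a merely uniformly continuous datum together with the correct semicontinuity of the envelopes — i.e. the $\delta$-approximation and the passage to the limit — rather than the junction condition itself, which the disjunctive relaxed formulation and (F4) render automatic in every barrier and in the bump.
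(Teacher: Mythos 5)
Your overall architecture coincides with the paper's: barriers of the form $u_0 \pm Ct$ built from the global bound $C_R$, followed by Perron's method, with the super-solution property of the Perron envelope obtained by the usual bump argument exploiting the disjunctive form of the relaxed junction condition. The only organizational difference is that the paper first reduces to initial data with $C^{1,1}$ restrictions to each branch (invoking stability) and runs Perron once, whereas you keep a $\delta$-indexed family and pass to the semi-limits at the end; these are essentially equivalent.

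There is, however, one step in your barrier verification that does not hold as stated: the claim that at a point of $\Gamma$ the single inequality $-C + H_i(t,x',D'\psi,\partial_i\psi)\le 0$, evaluated at the barrier's \emph{own} derivatives, ``already certifies the junction condition, so (L) is never invoked.'' The relaxed sub-solution condition must be checked for \emph{every} test function $\phi$ touching $\psi - Ct$ from above at $(t,x')\in\Gamma$, and such a $\phi$ is only constrained by $\phi_t=-C$, $D'\phi=D'\psi$ and $\partial_i\phi\ge\partial_i\psi$ for each $i$. Since $H_i(t,x',p',\cdot)$ is merely quasi-convex and coercive --- not non-increasing --- taking $\partial_i\phi$ large makes $H_i(t,x',D'\phi,\partial_i\phi)$ as large as you like, so the alternative ``$\phi_t+H_i(t,x,D\phi)\le 0$ for some $i$'' genuinely fails for such test functions (already for $N=1$, $H_1(p_1)=p_1^2$, $\psi\equiv 0$ and $\phi=-Ct+Mx_1$ with $M^2>C$). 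One must then fall back on the $L$-alternative: by (L2), $L(C,\partial_1\phi,\dots,\partial_N\phi,t,x',D'\psi)\le L(C,-R,\dots,-R,t,x',D'\psi)$, and (L5) makes the right-hand side nonpositive provided $C$ is chosen large enough \emph{depending on $L$}, not only on $C_R$. Symmetrically, for the super-solution barrier the test function satisfies $\partial_i\phi\le\partial_i\psi$ and one needs the coercivity (F3), through the lower bound $H_i\ge\min_{p_i}H_i$, to recover $\phi_t+H_i\ge 0$. So assumptions (L) and (F3) are in fact used for the barriers and the constant $C$ must absorb them; once this is corrected the rest of your argument goes through as in the paper.
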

\begin{rem}
This theorem states the existence of discontinuous solutions in the
sense of Ishii \cite{ishii87}. 
\end{rem}
\begin{proof}
  In view of the stability results, it is enough to construct a
  solution for some initial datum $u_0$ such that $u^i_0=u_0|_{J_i}$
  are in $C^{1,1}$. For such $u_0$'s, we can construct barriers in the
  classical way: $u^\pm (t,x) = u_0 (x) \pm Ct$.  For $C \ge C_{R_0}$
  with $R_0 \ge \|Du_0^i\|_\infty + \|D^2u_0^i\|_\infty$ for all
  $i =1,\dots,N$, the function $u^+$ is a relaxed super-solution while
  $u^-$ is a relaxed sub-solution. Indeed, as far as the equations in
  $J_i$ are concerned, it is classical; as far as the junction
  condition is concerned, the equation is satisfied up to $\Gamma$ and
  thus $u^\pm$ are relaxed semi-solutions on $\Gamma$.  We then
  consider $W$ the set of all functions lying below $u^+$ whose upper
  semi-continuous envelope is a relaxed sub-solution. Then the
  supremum of $w \in W$ is in $W$ and it is maximal. Let $w$ denote
  this maximal element. If the lower semi-continuous envelope is not a
  relaxed super-solution, there exists a test function $\phi$ and a
  point $(t,x)$ such that $\phi$ touches $w_*$ from below at $(t,x)$
  without satisfying the corresponding viscosity inequality. This
  implies $\phi < (u_+)_*$ in a neighbourdhood of $(t,x)$ and we can
  prove that $\phi$ is a relaxed sub-solution in the same
  neighbourhood. Then we can construct a relaxed sub-solution
  $w_\delta$ which is not below $w$, contradicting its maximality.
\end{proof}

\subsection{Flux-limited solutions}

It is proved in \cite{im13} that, in the special case where $L=L_A$
defined in \eqref{eq:defi-la} and for first order Hamilton-Jacobi
equations, relaxed solutions satisfy the junction condition in a
strong sense, which is made precise in the following definition.
\begin{defn}[Flux-limited solutions]\label{defi:FL}
  Given a function $A:(0,T) \times \Gamma \times \R^d \to \R$ such
  that $A \ge A_0$, a function $u\colon (0,T) \times J \to \R$ is a
  \emph{$A$-flux-limited sub-solution} (resp. \emph{$A$-flux-limited
    super-solution}) of \eqref{eq:dp-gen} if it is upper
  semi-continuous (resp. lower semi-continuous) and for any test
  function $\phi$ in the sense of Definition~\ref{defi:test} touching
  $u$ from above (resp. from below) at $(t,x) \in (0,T) \times J_i$,
  we have
\begin{eqnarray*}
& \phi_t + F_i(t,x,D\phi,D^2\phi) \le 0 \text{ at } (t,x) \\
\bigg(\text{resp. } & \phi_t + F_i(t,x,D\phi,D^2 \phi) \ge 0 \text{ at } (t,x) \bigg) 
\end{eqnarray*}
if $x \notin \Gamma$, and
 \begin{eqnarray*}
 & L_A (-\phi_t,\partial_1 \phi,\dots,\partial_N \phi,t,x', D' \phi) \le 0 \text{ at } (t,x) \\
 \bigg(\text{resp. } 
& L_A (-\phi_t,\partial_1 \phi,\dots,\partial_N \phi,t,x', D' \phi)\ge 0 \text{ at } (t,x) \bigg)
 \end{eqnarray*}
if $x \in\Gamma$. 
\end{defn}
\begin{rem} \label{rem:one-point} When proving that a function is a
  sub-solution or a super-solution of \eqref{eq:dp-gen} at one given
  point of $(0,T) \times \Gamma$, it is enough to consider a reduced
  set of test functions associated with this specific point. It is
  thus interesting to consider sub- or super-solution of
  \eqref{eq:dp-gen} \emph{at only one point of $(0,T) \times \Gamma$}
  -- see Theorem~\ref{thm:reduced} about the reduction of the set of
  test functions.
\end{rem}
The following proposition asserts that $L_A$-relaxed solutions
coincide with $A$-flux-limited solutions. It was proved in
\cite{im13,im14} in the case of first order equations. We point out
that the multidimensional proof of \cite{im14} applies \emph{without
  any change} to degenerate parabolic equations satisfying (F).
\begin{prop}[$L_A$-relaxed solutions are $A$-flux-limited solutions --
  \cite{im14}]\label{prop:rel-fl}
  Assume \emph{(F)} and \emph{(L)}. Then any $L_A$-relaxed sub-solution
  (resp. super-solution) of \eqref{eq:dp-gen} is an $A$-flux-limited
  sub-solution (resp. super-solution) of \eqref{eq:dp-gen}.
\end{prop}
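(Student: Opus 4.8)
The plan is to follow the first-order argument of \cite{im14}, observing that the only genuinely new feature---the second-order terms---is inert at the junction. Away from $\Gamma$ the two notions of solution coincide by definition, so it suffices to treat a test function $\phi$ touching $u$ at a point $(t_0,x_0)\in(0,T)\times\Gamma$; write $p_0=-\phi_t$, $p_i=\partial_i\phi$ and $p'=D'\phi$ at that point, all Hamiltonians being evaluated at $(t_0,x_0',p')$. By (F4) the junction condition involves only the first-order quantities $(p_0,p_1,\dots,p_N,p')$ and the $H_i$, never the Hessian. Moreover the modifications I will perform consist in adding a term $c_j x_j$ on the branch $J_j$: this keeps $\phi$ continuous across $\Gamma$, preserves its regularity, leaves $D^2\phi$ unchanged, and only shifts the normal slope $p_j$ by $c_j$. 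This is exactly why the proof of \cite{im14} applies verbatim. The single mechanism I exploit is that raising every normal slope $p_j$ (i.e. $c_j>0$) preserves the property of touching $u$ \emph{from above}, while lowering them ($c_j<0$) preserves touching \emph{from below}; combined with the coercivity (F3), sending a slope to $+\infty$ sends the corresponding $H_j$ to $+\infty$.

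\emph{Sub-solution case.} Let $\phi$ touch $u$ from above; I must show $L_A(p_0,\dots,p_N,t_0,x_0',p')\le0$, that is $A\le p_0$ and $H_j^-(p_j)\le p_0$ for every $j$. First I raise \emph{all} slopes to large values $\widehat p_j\ge\pi_j^0$ with $H_j(\widehat p_j)>p_0$, possible by (F3). The modified function still touches from above, so the relaxed condition applies; since every alternative $H_j(\widehat p_j)\le p_0$ now fails, it must hold that $L_A\le0$ at the new slopes. As $\widehat p_j\ge\pi_j^0$ we have $H_j^-(\widehat p_j)=\min_{p_j}H_j\le A_0\le A$, so $L_A\le0$ reads $A\le p_0$. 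Next, fixing $j$, if $H_j(p_j)\le p_0$ then $H_j^-(p_j)\le H_j(p_j)\le p_0$ is immediate; otherwise I keep $p_j$ and raise every other slope as above, so that all alternatives fail (the $j$-th because $H_j(p_j)>p_0$), forcing $L_A\le0$ at this configuration. The other branches again contribute only $\min H_k\le A$, so this inequality yields $H_j^-(p_j)\le p_0$. Combining, $L_A(p)\le0$.

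\emph{Super-solution case.} Let $\phi$ touch $u$ from below and suppose, for contradiction, that $L_A(p)<0$, i.e. $A<p_0$ and $H_j^-(p_j)<p_0$ for all $j$. Since $p_0>A\ge A_0\ge\min_{p_j}H_j$, the quasi-convexity (F4) makes $\{p_j:H_j(p_j)<p_0\}$ a nonempty open interval containing $\pi_j^0$; because $H_j^-(p_j)<p_0$, the slope $p_j$ lies strictly to the right of the left endpoint of this interval, so I may \emph{decrease} $p_j$ to some $\widetilde p_j\le p_j$ lying strictly inside $\{H_j<p_0\}$. The modified function still touches from below, and now every alternative $H_j(\widetilde p_j)\ge p_0$ fails, so the relaxed condition forces $L_A(\widetilde p)\ge0$. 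But $\widetilde p_j$ sits in the open sublevel set, whence $H_j^-(\widetilde p_j)<p_0$ strictly for every $j$; together with $A<p_0$ this gives $L_A(\widetilde p)<0$, a contradiction. Hence $L_A(p)\ge0$.

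\emph{Main obstacle.} The essential care lies in the super-solution step: the relaxed alternative is a disjunction in which the cheap option ``$H_j\ge p_0$'' is almost always available, and it is the decrease of the slopes into the \emph{open} sublevel sets $\{H_j<p_0\}$ that kills it and converts the relaxed condition into a \emph{strict} statement on $L_A$. The one technical point to verify is that strictness survives flat parts of $H_j$: it does precisely because $\widetilde p_j$ is chosen in the open interval, where either $\widetilde p_j\le\pi_j^0$ and $H_j^-(\widetilde p_j)=H_j(\widetilde p_j)<p_0$, or $\widetilde p_j\ge\pi_j^0$ and $H_j^-(\widetilde p_j)=\min H_j<p_0$. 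The quasi-convexity (F4) is exactly what guarantees these sublevel sets are intervals with well-defined endpoints $\pi_j^\pm$, so the first-order machinery of \cite{im14} transfers with no change.
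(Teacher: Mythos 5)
Your proof is correct and is essentially the approach the paper takes: the paper's own ``proof'' is just the observation that the first-order argument of \cite{im14} applies verbatim because, by (F4), the Hessian never enters the conditions imposed on $\Gamma$, and your slope-perturbation argument (raising all normal slopes via (F3) to rule out the branch alternatives for sub-solutions, lowering them into the open sublevel sets $\{H_j<p_0\}$ for super-solutions) is exactly that argument written out in full. No gaps.
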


\section{Reduced set of test functions for flux-limited solutions}
\label{sec:reduced}

In this section, we explain why it is sufficient to consider a reduced
set of test functions in order to check that a function is a
flux-limited (sub/super)solutions of \eqref{eq:dp-gen}. Such a result
is used in an essential way when proving Theorem~\ref{thm:class}.

\begin{defn}[Reduced test functions]\label{defi:test-reduced}
  Consider a flux limiter $A \ge A_0$  and a
  point $(t_0,x_0') \in (0,T) \times \Gamma$.  A function
  $\varphi : (0,T) \times J \to \R$ is a \emph{reduced test function}
  for \eqref{eq:dp-gen} at $(t_0,x_0')$ if there exists a function
  $\phi \in C^1((0,T) \times \R^d)$ and $N$ functions
  $\phi_i \in C^1 ([0,+\infty))$, $i=1,\dots,N$, such that
  \[\forall t \in (0,T), \forall  (x',x_i) \in J_i, \qquad \varphi (t,(x',x_i)) = \phi (t,x') + \phi_i (x_i) \]
  and, for all $i=1,\dots,N$, $\phi_i(0)=0$ and the slope $p_i= \phi_i'(0)$ and
  the tangential gradient $p'=D'\phi (t_0,x'_0)$ satisfy
\begin{equation}
\label{eq:reduced-slope}
 H_i (t_0,x'_0,p',p_i) = H_i^+ (t_0,x'_0,p',p_i) = A (t_0,x'_0,p')
\end{equation}
that is to say $p_i \in [\pi_i^+(t_0,x'_0,p'),\hat{\pi}_i^+(t_0,x'_0,p')]$. 
\end{defn}
Theorem~\ref{thm:reduced} below generalizes the one contained in
\cite{im13}. In order to state it, we need to consider the equation on
each (open) branch $i$, i.e. away from the junction hyperplane $\Gamma$:
\begin{equation} \label{eq:dp-far}
u_t +  F_i(t,x,Du,D^2u) =0, \quad (t,x)\in (0,T) \times J_i^*.
\end{equation}
We can now state and prove the following theorem. 
\begin{thm}[Reducing the set of test functions]\label{thm:reduced}
  Assume \emph{(F)} and consider a function
  $A:(0,T) \times \Gamma \times \R^d \to \R$ such that $A \ge A_0$.  Given a
  function $u:(0,T) \times J \to \R$, the following properties hold
  true.
\begin{enumerate}[i)]
\item If, for all $i\in \{1,\dots,N\}$,
  $u$ is a sub-solution of \eqref{eq:dp-far} and for 
  $(t,x) \in (0,T) \times \Gamma$,
\begin{equation}\label{eq:cont-weak}
 u(t,x) = \limsup_{s \to t, y \to x, y \in J_i^*} u(s,y),
\end{equation} 
then $u$ is an $A_0$-flux limited
sub-solution of \eqref{eq:dp-gen} at $(t,x)$.
\item If, for all $i\in \{1,\dots,N\}$, $u$ is a sub-solution of \eqref{eq:dp-far} satisfying
  \eqref{eq:cont-weak} and if for any reduced test function $\varphi$
  in the sense of Definition~\ref{defi:test-reduced} touching $u$ from
  above at $(t, x) \in (0,T) \times \Gamma$, we have
\begin{eqnarray*}
\varphi_t (t,x)+ A (x',D'\varphi (t,x)) \le 0,
\end{eqnarray*}
then $u$ is an $A$-flux-limited sub-solution  of \eqref{eq:dp-gen}  at $(t, x)$.
\item If, for all $i\in \{1,\dots,N\}$,  $u$ is a super-solution of
  \eqref{eq:dp-far} and if for any reduced test function $\varphi$ in
  the sense of Definition~\ref{defi:test-reduced} touching $u$ from
  below at $(t, x) \in (0,T)\times \Gamma$ we have
  \[ \varphi_t (t,x)+ A (x,D'\varphi (t,x)) \ge 0, \]
then $u$ is an $A$-flux-limited super-solution of \eqref{eq:dp-gen} at $(t, x)$.
\end{enumerate}
\end{thm}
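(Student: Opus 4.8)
The plan is to reduce every flux-limited inequality at a fixed point $(t_0,x_0)\in(0,T)\times\Gamma$ to a one-dimensional statement about the admissible normal slopes on each branch. Expanding $L_A$ from \eqref{eq:defi-la}, the $A$-flux-limited sub-solution (resp. super-solution) property at $(t_0,x_0)$ reads
\[ \phi_t + \max\Big(A(t_0,x_0',D'\phi),\ \max_{i} H_i^-(t_0,x_0',D'\phi,\partial_i\phi)\Big)\le 0 \quad(\text{resp. }\ge 0) \]
for every test function $\phi$ touching $u$ from the prescribed side at $(t_0,x_0)$. The engine is a critical-slope analysis: on each branch $i$ I perturb $\phi$ into $\phi+(q-p_i)x_i$ (with $p_i=\partial_i\phi$) and track the range of $q$ for which this function still touches $u$ on $J_i$ near $(t_0,x_0)$ from that side. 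Coercivity (F3) forces the resulting critical slope $\bar p_i$ to be finite, since as $q$ tends to $\mp\infty$ the perturbation crosses $u$ in the interior of $J_i$.

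The heart of the argument — and the step I expect to be the main obstacle — is the critical-slope lemma asserting that, for sub-solutions, $\bar p_i\le\pi_i^0$ and
\[ \phi_t + H_i(t_0,x_0',D'\phi,\bar p_i)\le 0, \]
with the reversed inequality and $\bar p_i\ge\pi_i^0$ for super-solutions. At the critical slope the perturbed test function has interior contact points $(s_n,y_n)\to(t_0,x_0)$ with $y_n\in J_i^*$, where the branch equation \eqref{eq:dp-far} applies; weak continuity \eqref{eq:cont-weak} is what guarantees that $u$ at these points converges to the junction value. To force these contacts toward $\Gamma$ one localizes with a strongly convex penalization, which produces an unbounded normal Hessian contribution $\lambda\,e_{d+1}\otimes e_{d+1}$ with $\lambda\to+\infty$; the difficulty is that one cannot then pass to the limit $F_i\to H_i$ by continuity and (F4) alone. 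This is exactly where (F2) is used: it bounds the discrepancy created by this large normal Hessian by $C_{i,R}\lambda\,|y_{n,i}|^2$, and since $|y_{n,i}|\to 0$ this term vanishes in the limit, leaving the displayed inequality. The bound $\bar p_i\le\pi_i^0$ records that the critical slope is attained on the non-increasing part of $H_i$.

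Granting this lemma, statement i) is immediate. For each $i$, since $\bar p_i\le\partial_i\phi$ and $H_i^-$ is non-increasing, $H_i^-(t_0,x_0',D'\phi,\partial_i\phi)\le H_i^-(t_0,x_0',D'\phi,\bar p_i)=H_i(t_0,x_0',D'\phi,\bar p_i)\le-\phi_t$, whence $\phi_t+\max_i H_i^-(\partial_i\phi)\le 0$. Choosing $i^*$ attaining the maximum in \eqref{a0}, we have $H_{i^*}(\bar p_{i^*})\ge\min_{p}H_{i^*}=A_0$, so $\phi_t+A_0\le 0$; combining the two gives the $A_0$-flux-limited inequality and proves i).

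For ii) it then remains to upgrade $A_0$ to a general $A\ge A_0$, i.e. to prove $\phi_t+A(t_0,x_0',D'\phi)\le 0$. Here I invoke the reduced-test-function hypothesis by manufacturing from $\phi$ a reduced test function $\varphi$ touching $u$ from above at $(t_0,x_0)$ with the same time derivative and tangential gradient. I take the separated form $\varphi(t,(x',x_i))=\phi(t,x',0)+\phi_i(x_i)$ on $J_i$ with $\phi_i'(0)=p_i^\star:=\pi_i^+(t_0,x_0',D'\phi,A)$, which satisfies \eqref{eq:reduced-slope}, and I add to $\phi_i$ a superlinear convex correction absorbing the variation of $\partial_i\phi$ near $\Gamma$. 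The feasibility point is that $p_i^\star\ge\pi_i^0\ge\bar p_i$, so adjusting the normal slope to $p_i^\star$ keeps $\varphi\ge u$ (raising a slope is automatic, lowering is allowed down to $\bar p_i$), and the convex correction then restores the separated form while preserving the contact. Since $\varphi_t(t_0,x_0)=\phi_t(t_0,x_0)$ and $D'\varphi(t_0,x_0)=D'\phi(t_0,x_0)$, the hypothesis yields $\varphi_t+A(t_0,x_0',D'\varphi)\le 0$, that is $\phi_t+A(t_0,x_0',D'\phi)\le 0$, completing ii). Statement iii) is proved analogously, arguing by contradiction: if the super-solution inequality failed, then $-\phi_t>A$ and $-\phi_t>H_i^-(\partial_i\phi)$ for all $i$, and the super critical-slope lemma (giving $\bar p_i\ge\pi_i^0$ and $H_i(\bar p_i)\ge-\phi_t>A=H_i(p_i^\star)$ on the increasing part, hence $\bar p_i>p_i^\star$) shows that the level-$A$ slope $p_i^\star$ can be realized while keeping $\varphi\le u$; this produces a reduced test function $\varphi$ with $\varphi_t=\phi_t$, so the hypothesis forces $\varphi_t+A\ge 0$, contradicting $-\phi_t>A$.
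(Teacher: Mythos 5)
Your overall architecture is the paper's: everything is reduced to two critical-normal-slope lemmas on each branch (Lemmas~\ref{first tec} and~\ref{second tec}), with (F3) forcing finiteness of the critical slope, the weak continuity \eqref{eq:cont-weak} supplying interior contact points, and (F2) absorbing the unbounded normal Hessian $2\eta x_i^{-3}\,e_{d+1}\otimes e_{d+1}$ of the penalization $\eta/x_i$; the remaining reduction is then the first-order argument of \cite{im13}. (One small imprecision there: the (F2) error is $C\lambda|y_i|^2=2C\eta/y_i$, which vanishes because $\eta$ is chosen $o(x^i_\eps)$ and the penalization estimate gives $\eta/y^i_\eps\to 0$, not merely because $|y_i|\to 0$, since $\lambda\to+\infty$ simultaneously.) The genuine problem is what you attribute to the critical-slope lemma and how you use it. The sub-solution lemma does \emph{not} give $\bar p_i\le\pi_i^0$; it gives $\bar p_i\le\partial_i\phi(t_0,x_0)$ together with $\phi_t+H_i(t_0,x_0',D'\phi,\bar p_i)\le 0$. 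The stronger claim is false: take $H_i(p)=|p|^2$ (so $\pi_i^0=0$) and $u=\phi=3x_i-9t$, a solution of the branch equation; the critical total slope is $\bar p_i=3>\pi_i^0$. Part i) survives because $H_i^-\le H_i$ pointwise can replace your equality $H_i^-(\bar p_i)=H_i(\bar p_i)$. But in part ii) the feasibility of lowering the normal slope to $p_i^\star=\pi_i^+(A)$ rests entirely on ``$\pi_i^0\ge\bar p_i$'', and in the example above with $A=4$ one has $p_i^\star=2<3=\bar p_i$ and no reduced test function with slope $p_i^\star$ touches $u$ from above: the step fails as written.

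The repair is the one in \cite{im13} and requires the contradiction hypothesis you omitted. Set $\lambda=-\phi_t(t_0,x_0)$. If $\lambda\ge A(t_0,x_0',D'\phi)$ the desired inequality $\phi_t+A\le 0$ already holds and nothing needs to be built. If $\lambda<A$, then $H_i(\bar p_i)\le\lambda<A$ and quasi-convexity of $H_i$ (the sublevel set $\{H_i<A\}$ is an interval all of whose points lie strictly below $\pi_i^+(A)$ once one is on the non-decreasing side) give $\bar p_i<\pi_i^+(A)=p_i^\star$, which is the correct feasibility statement; the hypothesis applied to the resulting reduced test function yields $A\le\lambda$, a contradiction. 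Symmetrically, in iii) the inequality $\bar p_i\ge\pi_i^0$ is not part of Lemma~\ref{first tec} but must be derived from the contradiction hypothesis $-\phi_t>\max_iH_i^-(t_0,x_0',D'\phi,\partial_i\phi)$: if $\bar p_i<\pi_i^0$ then $H_i^-(\partial_i\phi)\ge H_i^-(\bar p_i)=H_i(\bar p_i)\ge-\phi_t$, a contradiction; the case $\bar p_i=+\infty$ (not covered by the lemma) must also be treated, and is the easy one. With these corrections your proof coincides with the paper's.
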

\begin{rem}
In the previous statement, functions are flux-limited solution of \eqref{eq:dp-gen} at
only one point of $(0,T) \times \Gamma$ -- see
Remark~\ref{rem:one-point} above.
\end{rem}
\begin{proof}
  The proof of \cite[Theorem~2.7]{im13} applies here \emph{without any
    change} after proving the two lemmas~\ref{first tec} and
  \ref{second tec} about \emph{critical normal slopes}.  Indeed, with
  such technical results in hands, the proof focuses on what happens
  on $\Gamma$ and second derivatives do not appear any more.
\end{proof}
\begin{lem}[Super-solution property for the critical normal slope on each branch]\label{first tec} 
  Let $i \in \{1,\dots,N\}$ be fixed.  Let
  $u : (0, T)\times J_i \to \R$ be a lower semi-continous
  super-solution of \eqref{eq:dp-far}. Let $\phi$ be a test function
  touching $u$ from below at some point
  $(t_0, x_0) \in (0, T) \times \Gamma$. We consider
\[
\overline{p_i} = \sup\{\overline{p} \in \R : 
    \exists r > 0, \phi(t, x)+\overline{p}x_i \le u(t, x) 
 \text{ for }    (t, x) \in  B_r (t_0,x_0) \cap (0,T) \times J_i\}.
\]
If $\overline{p_i} < +\infty$, then we have 
\[
\phi_t+ H_i(t,x,D'\phi, \partial_i \phi+\overline{p_i} ) \quad \ge 0 \quad \text{ at } (t_0,x_0)
\quad \text{ with } \overline{p_i} \ge 0.
\]
\end{lem}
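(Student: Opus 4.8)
The plan is to approximate the boundary point $(t_0,x_0)\in(0,T)\times\Gamma$ from inside the open branch $J_i^*$, where the genuine viscosity super-solution inequality for \eqref{eq:dp-far} is available, and then let the approximating points tend to $(t_0,x_0)$, using (F4) to collapse $F_i$ into $H_i$ on $\Gamma$. The sign of the critical slope is immediate: since $\phi$ touches $u$ from below, the choice $\overline{p}=0$ already gives $\phi+\overline{p}\,x_i=\phi\le u$ near $(t_0,x_0)$, so $0$ belongs to the set defining $\overline{p_i}$ and hence $\overline{p_i}\ge 0$. By the standard reduction I may assume the contact is strict, and even quantitatively so: replacing $\phi$ by $\phi-|t-t_0|^2-|x'-x_0'|^4-x_i^4$ changes neither $\phi_t(t_0,x_0)$, $D'\phi(t_0,x_0)$, $\partial_i\phi(t_0,x_0)$ nor the value of $\overline{p_i}$, while ensuring $u-\phi\ge |t-t_0|^2+|x'-x_0'|^4+x_i^4$ on a small ball $B_r(t_0,x_0)\cap J_i$.

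Fix $\e>0$ and a large constant $K>0$. Since $\overline{p_i}+\e$ exceeds the critical slope, $\phi+(\overline{p_i}+\e)x_i\le u$ fails on every ball, and as this inequality does hold on $\Gamma$ (where $x_i=0$), any violation occurs at a point with $x_i>0$. I therefore minimise the lower semi-continuous function $u-\psi_{\e,K}$, where $\psi_{\e,K}(t,x)=\phi(t,x)+(\overline{p_i}+\e)x_i-Kx_i^2$, over the compact set $\overline{B_r}(t_0,x_0)\cap J_i$, calling $(t_\e,x_\e)$ a minimiser. The concave penalisation $-Kx_i^2$ prevents escape to large $x_i$; together with the (quantitative) strict contact, for $K$ large and $\e$ small the minimiser lies in the open set $B_r(t_0,x_0)\cap J_i^*$, with minimal value $<0$ and, crucially, $x_{i,\e}>0$. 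Comparing this value with its value $0$ at $(t_0,x_0)$ and using $u-\phi\ge x_i^4$ yields $Kx_{i,\e}^2\le(\overline{p_i}+\e)x_{i,\e}$ and $|t_\e-t_0|^2+|x_\e'-x_0'|^4\le(\overline{p_i}+\e)x_{i,\e}$, so that, letting $\e\to0$ and $K\to+\infty$, the minimisers localise: $(t_\e,x_\e)\to(t_0,x_0)$ and $x_{i,\e}\to 0$.

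Because $(t_\e,x_\e)\in J_i^*$ and $u$ is a super-solution of \eqref{eq:dp-far}, the smooth test function $\psi_{\e,K}$ gives
\[ \phi_t+F_i\big(t_\e,x_\e,(D'\phi,\partial_i\phi+\overline{p_i}+\e-2Kx_{i,\e}),\,D^2\phi-2K\,e_{d+1}\otimes e_{d+1}\big)\ge 0, \]
all derivatives of $\phi$ being evaluated at $(t_\e,x_\e)$. Here (F2) is the decisive ingredient: applied with $\lambda=2K$ it bounds the effect of having subtracted $2K\,e_{d+1}\otimes e_{d+1}$ from the Hessian by $2K\,C_{i,R}\,x_{i,\e}^2$, and the bound $Kx_{i,\e}^2\le(\overline{p_i}+\e)x_{i,\e}$ shows this error is $\lesssim(\overline{p_i}+\e)x_{i,\e}\to0$ as $(t_\e,x_\e)\to\Gamma$ — precisely because of the factor $|y_i|^2$ in (F2). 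One may thus replace the Hessian by $D^2\phi$ up to $o(1)$, after which (F4), for which $F_i$ on $\Gamma$ equals $H_i$ independently of the matrix, together with the continuity of $F_i$, lets me pass to the limit and obtain $\phi_t(t_0,x_0)+H_i(t_0,x_0,D'\phi,\partial_i\phi+\overline{p_i})\ge 0$.

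The hard part will be the simultaneous control of the localisation and of the normal slope: the penalisation shifts the realised slope from $\partial_i\phi+\overline{p_i}$ to $\partial_i\phi+\overline{p_i}+\e-2Kx_{i,\e}$, and the correction $2Kx_{i,\e}$, bounded only by $2(\overline{p_i}+\e)$, need not vanish merely because $x_{i,\e}\to0$. One must therefore either tune the balance between the slope bump $\e$ and the strength $K$ so that $2Kx_{i,\e}\to0$ and the Hamiltonian is evaluated exactly at $\partial_i\phi+\overline{p_i}$, or invoke the monotonicity of $H_i$ along its non-decreasing branch to absorb the residual correction, noting that the realised slope never exceeds $\overline{p_i}$. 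The degeneracy condition (F2), controlling the normal second order term by $|y_i|^2$ near $\Gamma$, is exactly what makes this limiting procedure legitimate, and is the reason this parabolic statement, unlike the first order one of \cite{im13}, requires its own proof.
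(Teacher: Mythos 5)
Your overall strategy (touch $u$ from inside $J_i^*$ with a perturbed test function carrying normal slope close to $\overline{p_i}+\e$, then pass to the limit and use (F4) to collapse $F_i$ into $H_i$ on $\Gamma$) is the right one, and your observation that $\overline{p_i}\ge 0$ is correct. But the specific penalisation $-Kx_i^2$ with $K\to+\infty$ creates two genuine gaps. The first you flag yourself, and it is fatal as written: the slope realised at the contact point is $\partial_i\phi+\overline{p_i}+\e-2Kx_{i,\e}$, and your only estimate $Kx_{i,\e}\le\overline{p_i}+\e$ leaves a residual correction of order $\overline{p_i}$ in the limit; neither suggested repair is carried out, and ``absorbing'' the correction by monotonicity cannot produce the inequality at the exact slope $\overline{p_i}$ that the lemma asserts. (This gap is repairable: by definition of $\overline{p_i}$ as a supremum you also have $u\ge\phi+(\overline{p_i}-\e')x_i$ on a small ball, and comparing the minimal value of $u-\psi_{\e,K}$ with its value $0$ at $(t_0,x_0)$ then yields $Kx_{i,\e}\le\e+\e'$, so the slope correction is $O(\e+\e')$; the same definition is what you need to justify, rather than assert, that the minimum is negative despite the penalisation.) The second gap you do not flag: your use of (F2) is illegitimate. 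The constant $C_{i,R}$ in (F2) is only available when the \emph{base} matrix $B$ satisfies $|y'|+|B|\le R$, and to compare $F_i(\cdot,D^2\phi-2Ke_{d+1}\otimes e_{d+1})$ with $F_i(\cdot,D^2\phi)$ you must take $B=D^2\phi-2Ke_{d+1}\otimes e_{d+1}$, whose norm is of order $K\to+\infty$; the resulting error $2K\,C_{i,R(K)}\,x_{i,\e}^2$ has no reason to vanish, since (F2) puts no restriction on the growth of $R\mapsto C_{i,R}$.

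The paper's proof avoids both issues by a structurally different construction: it perturbs $\phi+(\overline{p_i}+\e)y_i$ by $2\e\Psi_\delta$, where $\Psi_\delta$ is a rescaled cutoff vanishing near $(t_0,x_0)$ and negative near $\partial B_\delta$, which forces an interior contact point while keeping the normal-slope perturbation of size $O(\e)$ and the Hessian perturbation bounded. The key point you miss is that the Hessian contribution does not need to vanish at all: it suffices that it converge (along a subsequence) to some bounded matrix $B$, because at the limit point $x_0\in\Gamma$ assumption (F4) makes $F_i$ independent of its second-order argument. Consequently (F2) is not used in this lemma; it is needed only for the sub-solution counterpart, Lemma~\ref{second tec}, where the perturbation $\eta/x_i$ has an unbounded Hessian but is applied with a bounded base matrix. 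As it stands, your proposal is incomplete.
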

\begin{lem}[Sub-solution property for the critical normal slope on each branch]\label{second tec}  
  Let $i \in \{1,\dots ,N\}$ be fixed.  Let
  $u : (0, T)\times J_i \to \R$ be a sub-solution of
  \eqref{eq:dp-far}. Let $\phi$ be a test function touching $u$ from
  above at some point $(t_0, x_0) \in (0, T) \times \Gamma$. We
  consider
\[
\underline{p_i} = \inf\{\overline{p} \in \R : 
    \exists r > 0, \phi(t, x)+\overline{p}x_i \ge u(t, x) 
 \text{ for }  (t, x) \in   B_r (t_0,x_0) \cap (0,T) \times J_i\}.
\]
If $u$ satisfies
\begin{equation}\label{eq:cont-weak-i}
 u(t_0,x_0) = \limsup_{s \to t_0, y \to x_0, y \in J_i^*} u(s,y),
\end{equation}
then $\underline{p_i} > - \infty$; moreover, 
 we have in this case
\[
\phi_t+  H_i(t,x,D'\phi,\partial_i \phi+\underline{p_i} )\le 0 
\quad \text{ at } (t_0,x_0) \quad  \text{ with } \underline{p_i}  \le 0.
\]
\end{lem}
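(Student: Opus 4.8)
The plan is to analyze the \emph{critical normal slope} $\underline{p_i}$ directly, following the strategy of \cite[Theorem~2.7]{im13} but handling the degenerate second-order term. Fix the branch $i$ and write the normal coordinate as $x_i\ge 0$; up to a standard penalization vanishing to second order at $(t_0,x_0)$ we may assume the contact is strict, which changes neither $D\phi,\partial_i\phi$ at $(t_0,x_0)$ nor the conclusion. Since $x_i\ge 0$ on $J_i$, the slope $\overline p=0$ lies in the admissible set $\{\overline p:\exists r>0,\ \phi+\overline p\,x_i\ge u \text{ on } B_r(t_0,x_0)\cap (0,T)\times J_i\}$, so this set is a half-line unbounded above and $\underline{p_i}\le 0$. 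The elementary but decisive observation is that on $\Gamma$ (where $x_i=0$) the quantity $\phi+\overline p\,x_i-u=\phi-u$ is \emph{independent of} $\overline p$; hence the obstruction preventing one from lowering $\overline p$ below the critical value can only come from points with $x_i>0$. This is exactly what will push the relevant contact points into the open branch $J_i^*$, where $u$ solves \eqref{eq:dp-far} and a viscosity inequality is available.

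Next I would produce such interior contacts quantitatively. For $\overline p$ just below $\underline{p_i}$ the slope is not admissible, so in every small ball there is a point with $u>\phi+\overline p\,x_i$; by the observation above such a point has $x_i>0$, and by the weak continuity \eqref{eq:cont-weak-i} these points can be taken to converge to $(t_0,x_0)$. Since on $\Gamma$ the same quantity equals $u-\phi\le 0$, maximizing
\[ u(t,x)-\phi(t,x)-\overline p\,x_i-\Lambda x_i^2 \]
over $\overline B_r(t_0,x_0)\cap (0,T)\times J_i$ produces a maximizer in the open branch: its value is strictly positive there (non-admissibility), hence larger than the nonpositive values on $\Gamma$, while the quadratic confinement $-\Lambda x_i^2$, tuned against the growth of $-\overline p\,x_i=|\overline p|x_i$ on the part of $\partial B_r$ where $x_i$ is large, together with strict contact on the rest of $\partial B_r$, keeps it off the outer boundary. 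Letting $\overline p=\overline p_n\uparrow\underline{p_i}$, $r=r_n\downarrow 0$ and $\Lambda=\Lambda_n$, I obtain interior contact points $(t_n,x_n)\to(t_0,x_0)$ with $(x_n)_i\to 0^+$ at which the smooth function $\psi_n=\phi+\overline p_n x_i+\Lambda_n x_i^2+c_n$ touches $u$ from above.

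I would then write the sub-solution property of \eqref{eq:dp-far} at the interior point $(t_n,x_n)$ with test function $\psi_n$, namely
\[ \phi_t+F_i\bigl(t_n,x_n,\ D\phi+(\overline p_n+2\Lambda_n (x_n)_i)e_{d+1},\ D^2\phi+2\Lambda_n\,e_{d+1}\otimes e_{d+1}\bigr)\le 0. \]
Here degenerate ellipticity \emph{(F1)} and, crucially, \emph{(F2)} enter: applying \emph{(F2)} with $\lambda=2\Lambda_n$ bounds the effect of the positive second-order perturbation by $2C_{i,R}\Lambda_n (x_n)_i^2$. Choosing the scales so that this error, as well as the gradient correction $2\Lambda_n (x_n)_i$, tend to $0$, I may discard the Hessian contribution in the limit; then \emph{(F4)}, which removes the Hessian dependence on $\Gamma$, together with continuity \emph{(F1)}, lets me pass to the limit $n\to\infty$ to obtain $\phi_t+H_i(t_0,x_0',D'\phi,\partial_i\phi+\underline{p_i})\le 0$ at $(t_0,x_0)$, which is the announced inequality with $\underline{p_i}\le 0$.

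It remains to establish $\underline{p_i}>-\infty$, and this is precisely where the ``weak continuity'' \eqref{eq:cont-weak-i} is indispensable (in contrast with Lemma~\ref{first tec}, where finiteness of $\overline{p_i}$ is assumed outright): it guarantees that $u(t_0,x_0)=\phi(t_0,x_0)$ is recovered from interior values, so the interior near-contacts above persist for arbitrarily negative slopes. Running the same construction along slopes $\overline p\to-\infty$ would then force $\phi_t+H_i(t_0,x_0',D'\phi,\partial_i\phi+\overline p)\le 0$ with $\overline p\to-\infty$, contradicting the coercivity of $H_i$ coming from \emph{(F3)}--\emph{(F4)}; hence $\underline{p_i}$ is finite. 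I expect the main obstacle to be exactly the balancing in the penultimate paragraph: selecting $\Lambda_n$, $r_n$ and controlling $(x_n)_i$ simultaneously so that the maximizer stays interior, the gradient penalty $\Lambda_n(x_n)_i$ stays controlled, and the degenerate second-order error $C_{i,R}\Lambda_n(x_n)_i^2$ furnished by \emph{(F2)} vanishes in the limit. This is the one step where the parabolic, second-order nature of the equation genuinely interacts with the junction analysis, and it is what makes \emph{(F2)} essential here.
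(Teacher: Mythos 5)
Your overall architecture is the right one (interior touching points converging to $(t_0,x_0)$, the viscosity inequality there, (F2) to absorb the positive normal Hessian perturbation, then (F4) and (F3) in the limit), but the device you use to manufacture the interior contacts --- maximizing $u-\phi-\overline p\,x_i-\Lambda x_i^2$ for a \emph{non-admissible} slope $\overline p<\underline{p_i}$ --- does not close, and the ``balancing'' you flag at the end is not a technicality but an actual obstruction. To keep the maximizer off the outer boundary $\partial B_r$ you must dominate the linear growth $|\overline p|\,x_i$ there by $\Lambda x_i^2$ against a strictness margin of order $r^2$, which forces $\Lambda\gtrsim |\overline p|^2/r^2$; but at a violating point with normal coordinate $s\le r$ and excess $\delta_*>0$ you need $\Lambda s^2<\delta_*$, i.e.\ $s\lesssim r\sqrt{\delta_*}/|\overline p|$, and non-admissibility gives neither a lower bound on $\delta_*$ nor such an upper bound on $s$. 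The two requirements are in general incompatible. The same issue defeats your finiteness argument: if $\underline{p_i}=-\infty$ then \emph{every} slope is admissible, there are no violating points at all, so ``running the same construction'' produces nothing; and the weak continuity \eqref{eq:cont-weak-i} only gives $u(t_\eps,x_\eps)\to u(t_0,x_0)$ with no rate relative to $x_\eps^i$, so $u-\phi-\overline p\,x_i-\Lambda x_i^2$ at those points need not exceed its value $0$ attained at $(t_0,x_0)\in\Gamma$.

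The paper's proof avoids this by working with an \emph{admissible} slope $p\le 0$ (so $u-\phi-p\,x_i\le 0$ throughout the ball and $\le-\delta$ on $\partial B\setminus\Gamma$ after the strictness penalization) and adding the \emph{singular} barrier $\eta/x_i$ to the test function, i.e.\ minimizing $\Psi-u$ with $\Psi=\phi+p\,x_i+\eta/x_i$. The barrier is $+\infty$ on $\Gamma$, so confinement away from $\Gamma$ is automatic and requires no domination of $|p|\,x_i$ on the outer boundary; the anchor is the weak-continuity sequence $(t_\eps,x_\eps)\in J_i^*$, along which $\Psi-u\to 0$ once $\eta=o(x_\eps^i)$ is chosen \emph{after} $x_\eps$. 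One then checks $\eta/y_\eps^i\to0$ at the minimizers, applies (F2) with $\lambda=2\eta/(y_\eps^i)^3$ (error of order $\eta/y_\eps^i\to0$), and uses the coercivity (F3) to bound the effective slope $p-\eta/(y_\eps^i)^2$ from below, which yields the viscosity inequality and $\underline{p_i}>-\infty$ in one stroke. Replacing your quadratic penalization by this singular one is the missing idea.
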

We first prove Lemma~\ref{first tec}.
\begin{proof}[Proof of Lemma~\ref{first tec}]
The proof follows the same lines of \cite[Lemma 2.8]{im13}. 

From the definition of $\overline{p_i}$, for all $\e>0$ small enough,
there exists $\delta=\delta(\e) \in (0,\e)$ such that
\begin{equation}\label{eq:dessous}
 u(s, y) \ge  \phi(s, y)+(\overline{p_i}-\e) y_i \quad  
\text{ for all } (s,y) \in B_\delta(t_0,x_0) \cap (0,T) \times J_i
\end{equation}
and there exists $(t_\e,x_\e) \in B_{\delta/2}(t_0,x_0)$ such that
\begin{eqnarray*}
 u(t_\e,x_\e)  <  \phi(t_\e,x_\e) +(\overline{p_i}+\e)x_\e^i.
\end{eqnarray*}
We choose a smooth function $\Psi:\R^{d+2} \to [-1,0]$ such that
\[
\Psi= \begin{cases}
0 & \text{ in } B_{1/2}(t_0,x_0) \\
-1 & \text{ outside } B_{1}(t_0,x_0).
\end{cases}
\]
We define for $(s,y) \in (0,T) \times J_i$, 
\[
\Phi(s,y)=\phi(s,y)+2 \e \Psi_\delta(s,y)+(\overline{p_i}+\e)y_i 
\]
with 
\(
\Psi_\delta(Y)=\delta \Psi\left(\frac{Y}{\delta}\right).
\)
Remark that for $(s,y) \in \partial (B_\delta(t_0,x_0)\cap (0,T) \times J_i)$, 
we have $y_i \le \delta$. In particular,
$-2 \e \delta+(\overline{p_i}+\e)y_i \le (\overline{p_i}-\e)y_i$ for such $(s,y)$. 
Hence  \eqref{eq:dessous} implies  
\[
\begin{cases}
\Phi(s,y)=\phi(s,y)-2  \e \delta+(\overline{p_i}+\e)y_i \le u(s,y) & 
\text{ for } (s,y) \in \partial (B_\delta(t_0,x_0) \cap (0,T) \times J_i), \medskip \\
\Phi(s,x) \le \phi(s,x) \le u(s,x) & \text{ for } (s,x) \in (t_0-\delta,t_0+ \delta) \times \Gamma
\end{cases}
\]
 and
\[
\Phi(t_\e,x_\e)=\phi(t_\e,x_\e)+(\overline{p_i}+\e)x_\e^i > u(t_\e,x_\e).
\]
We conclude that there exists a point
$(\overline{t_\e},\overline{x_\e}) \in B_\delta(t_0,x_0) \cap ((0,T)
\times J_i^*)$
such that $u-\Phi$ reaches a minimum in
$\overline{B_\delta(t_0,x_0)} \cap ([0,T] \times J_{i})$.
We thus can write the viscosity inequality
\begin{eqnarray*}
\Phi_t+  F_i(t,x,D\Phi,D^2\Phi) \ge 0 \quad \text{ at } (\overline{t_\e},\overline{x_\e})
\end{eqnarray*}
which reads
\begin{multline}
\label{eq:eps}
  \phi_t(\overline{t_\e},\overline{x_\e}) +2 \e (\Psi_\delta)_t(\overline{t_\e},\overline{x_\e}) \\
  + F_i(\overline{t_\e},\overline{x_\e}, (D'  \phi+2\eps D' \Psi_\delta) (\overline{t_\e},\overline{x_\e}), \partial_i \phi
  (\overline{t_\e},\overline{x_\e}) +2\e \partial_i 
\Psi_\delta(\overline{t_\e},\overline{x_\e})+\overline{p_i}+\e,
  D^2 \phi+2\e  D^2 \Psi_\delta(\overline{t_\e}, \overline{x_\e})) \ge 0.
\end{multline}
We now send $\e \to 0$ in the above inequality; recall that
$\delta \in (0,\eps)$ and $\Psi_\delta = \delta \Psi (\cdot /\delta)$;
in particular,
\begin{equation}\label{eq:eps2}
 \e (\Psi_\delta)_t(\overline{t_\e},\overline{x_\e}), \eps D'
\Psi_\delta(\overline{t_\e},\overline{x_\e}), \e \partial_i
\Psi_\delta(\overline{t_\e},\overline{x_\e}) \to 0 \quad \text{ as }
\eps \to 0.
\end{equation}
As far as second derivatives are concerned, we have 
\[ | \eps D^2 \Psi_\delta | \le \|D^2 \Psi\|_\infty.\]
In particular, 
\begin{equation}\label{eq:eps3}
 \e  D^2 \Psi_\delta(\overline{t_\e}, \overline{x_\e})) \to B \in \Sd
\end{equation}
along a subsequence. Since $(\overline{t_\e},\overline{x_\e}) \to (t_0,x_0)$, we finally
get from \eqref{eq:eps}, \eqref{eq:eps2} and \eqref{eq:eps3} that 
\[ \phi_t(t_0,x_0) + F_i (t_0,x_0,D'\phi (t_0,x_0), \partial_i \phi
(t_0,x_0) + \overline{p}_i, D^2 \phi (t_0,x_0)+B) \ge 0 \]
which is the desired inequality since $x_0 \in \Gamma$ and $F_i$ satisfies (F4). The proof is now complete.
\end{proof}
We now turn to the proof of Lemma~\ref{second tec}
\begin{proof}[Proof of Lemma~\ref{second tec}]
The main difference with the previous lemma is the claim that the critical normal slope is finite. 
This is the reason why we only explain this point. Here again, we follow closely \cite{im13}. 

Let $p \in (-\infty,0]$ be such that there exists $r >0$
such that $\phi + p  x_i \ge u$ in
$B = B_{r} (t_0,x_0) \cap (0,T) \times J_i$.  Remark first that,
replacing $\phi$ with $\phi+ (t-t_0)^2+|x-x_0|^2$ if necessary, we can
assume that
\begin{equation}\label{eq:strict} 
u(t,x) < \phi(t,x) + p  x_i \text{ if } (t,x) \neq (t_0,x_0).
\end{equation}
 In particular, there exists $\delta >0$ such
that $\phi +p  x_i \ge u + \delta$ on 
$\partial B \setminus \Gamma$.

Since $u$ satisfies \eqref{eq:cont-weak-i}, there exists $(t_\eps,x_\eps) \to (t_0,x_0)$ such 
that $x_\eps \in J_i^*$ and $u(t_0,x_0) = \lim_{\eps \to 0} u (t_\eps,x_\eps)$. 

We now introduce the following perturbed test function 
\[ \Psi (t,x) = \phi (t,x) + p  x_i + \frac{\eta}{x_i} \]
where $\eta=\eta (\eps)$ is a small parameter to be chosen later. 
Let $(s_\eps,y_\eps)$ realize the infimum of $\Psi-u$ in $\overline{B}$. In particular,
\begin{equation}\label{estim:pen}
(\phi +p  x_i -u)(s_\eps,y_\eps) \le  \Psi (s_\eps,y_\eps) - u(s_\eps,y_\eps) 
\le  \Psi (t_\eps,x_\eps) - u(t_\eps,x_\eps) \to 0 \quad \text{ as } \quad \eps \to 0
\end{equation}
as soon as $\eta(\eps) = o (x_\eps^i)$ with
$x_\eps =(x'_\eps,x^i_\eps)$. In particular, in view of
\eqref{eq:strict}, this implies that $(s_\eps,y_\eps) \to (t_0,x_0)$ as
$\eps \to 0$. Since $u$ is a sub-solution of \eqref{eq:dp-far}, we know
that
\[ \phi_t (s_\eps,y_\eps) + F_i (s_\eps,y_\eps,D'\phi (s_\eps,y_\eps),\partial_i \phi (s_\eps,y_\eps) + p  -
\frac{\eta}{(y_\eps^i)^2} , D^2\phi(s_\eps,y_\eps) +
\frac{2\eta}{(y_\eps^i)^3} e_{d+1} \otimes e_{d+1}) \le 0
\]
(where $(e_1,\dots,e_{d+1})$ is an orthonormal basis of $\R^{d+1}$ and $e_{d+1}$ is orthogonal to $\Gamma$). 
Use now (F2) in order to get
\[ \phi_t (s_\eps,y_\eps) + F_i (s_\eps,y_\eps,D'\phi (s_\eps,y_\eps),\partial_i \phi (s_\eps,y_\eps) + p  -
\frac{\eta}{(y_\eps^i)^2} , D^2\phi(s_\eps,y_\eps) ) \le 2C_i \frac{\eta}{y_\eps^i}.
\]
Remark now that \eqref{estim:pen} implies
\[ \frac{\eta}{y_\eps^i} - \frac{\eta}{x_\eps^i} \le \left(p  (x_\eps^i-y_\eps^i) + (u-\phi)(s_\eps,y_\eps) - (u-\phi) (t_\eps,x_\eps)\right) \to 0 \quad \text{ as } \eps \to 0. \]
Recalling that $\eta$ is chosen so that $\eta / x_\eps^i \to 0$ as $\eps \to 0$, we thus get
\[ \frac{\eta}{y_\eps^i} \to 0 \quad \text{ as } \eps \to 0.\]
In particular, the coercivity of $F_i$ (see (F3)) implies that
$p  - \frac{\eta}{(y_\eps^i)^2}$ is bounded as $\eps \to 0$. Hence we
can pass to the limit as $\eps \to 0$ in the viscosity inequality and
get
\[ 
\phi_t (t_0,x_0) + H_i (t_0,x_0,D'\phi (t_0,x_0), \partial_i \phi (t_0,x_0) + p ^0) \le 0
\]
where $p ^0 \in (-\infty,0]$ is any accumulation point of
$p  - \frac{\eta}{(y_\eps^i)^2}$ as $\eps \to 0$. The previous
inequality and (F3) implies in particular that $p ^0$ is bounded from below by
a constant $C$ which only depends on $H_i, \phi_t,D\phi$ at
$(t_0,x_0)$. Indeed, (F3) implies in particular that 
\[ \lim_{|p|\to +\infty} \inf_{(t,x') \in (0,T) \times \Gamma}
H_i(t,x',p) = +\infty. \]
But this also implies that $p \ge C$ and, in turn, $\underline{p_i} \ge C$. The proof is now complete.
\end{proof}

\section{Proof of the main theorem}
\label{sec:class}

This section is devoted to the proof of the first main result,
Theorem~\ref{thm:class}. Throughout this section, we do not write the
$(t,x',p')$ dependence of $A_L$, $\pi^+$, $\hat{\pi}^+$ etc. (see
\eqref{def:pi+} and \eqref{def:pi+hat} for a definition) in order to
clarify the presentation and proofs. 

The proof of Theorem~\ref{thm:class} relies on properties and other
representations of the effective flux limiter $A_L$; we gather them
in the following preparatory proposition.
\begin{prop}[Representations of $A_L$]\label{prop:al}
Let $A_L$ be the effective flux limiter given by  Definition~\ref{defi:al}. 
\begin{enumerate}[i)]
\item If $L(A_0, \pi_1^+(A_0), \dots, \pi_N^+ (A_0)) \le 0$ then $A_L = A_0$. 
\item If $L(A_0, \pi_1^+(A_0), \dots, \pi_N^+ (A_0)) >0$ then $A_L$ is
  well defined: there exists a unique $\lambda^* \in \R$ and
  there exist
  $p_i^* \in [\pi_i^+(\lambda^*),\hat{\pi}_i^+ (\lambda^*)]$ (not
  necessarily unique) such that
  \( L(\lambda^*,p_1^*,\dots,p_N^*) = 0.\)
\item If $L(A_0, \pi_1^+(A_0), \dots, \pi_N^+ (A_0)) >0$, then
\begin{eqnarray}
\label{eq:al}
 A_L =  \sup \{ \lambda \ge A_0 : L(\lambda,\pi_1^+(\lambda),\dots,\pi_N^+ (\lambda)) > 0 \}   \\
\label{eq:al+}
A_L = \inf \{ \lambda \ge A_0 : L (\lambda,\hat{\pi}_1^+ (\lambda), \dots,
\hat{\pi}_N^+ (\lambda)) < 0 \}.
\end{eqnarray}
\item Moreover, if $L(A_0, \pi_1^+(A_0), \dots, \pi_N^+ (A_0)) >0$, we also have
\begin{eqnarray}\label{view:def}
 L (A_L,\pi_1^+(A_L),\dots,\pi_N^+(A_L)) \ge 0 \\
\label{view:def+}
 L (A_L, \hat{\pi}_1^+ (A_L), \dots,\hat{\pi}_N^+ (A_L)) \le 0.
\end{eqnarray}
\end{enumerate}
\end{prop}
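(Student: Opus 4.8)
The plan is to reduce everything to the one-variable analysis of the two scalar functions
\[ g(\lambda) := L(\lambda,\pi_1^+(\lambda),\dots,\pi_N^+(\lambda)), \qquad \hat g(\lambda) := L(\lambda,\hat\pi_1^+(\lambda),\dots,\hat\pi_N^+(\lambda)), \]
defined for $\lambda \ge A_0$ (the dependence on $(t,x',p')$ being suppressed). First I would record the elementary properties of $\pi_i^+$ and $\hat\pi_i^+$ that follow from (F3) and the continuity of $H_i^+$: for $\lambda \ge A_0 \ge \min_{p_i} H_i$ the set $\{p_i : H_i^+ = \lambda\}$ is a nonempty compact interval, so $\pi_i^+(\lambda) \le \hat\pi_i^+(\lambda)$ are finite with $H_i^+(\pi_i^+(\lambda)) = H_i^+(\hat\pi_i^+(\lambda)) = \lambda$; both are non-decreasing, $\pi_i^+$ is left-continuous and $\hat\pi_i^+$ is right-continuous (as generalized inverses of the continuous non-decreasing $H_i^+$), and $\pi_i^+(\lambda) \to +\infty$ as $\lambda \to +\infty$. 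Since $L$ is continuous and, by (L2), non-increasing in each of $p_0,\dots,p_N$, the maximum (resp.\ minimum) of $L(\lambda,\cdot)$ over the box $\prod_i[\pi_i^+(\lambda),\hat\pi_i^+(\lambda)]$ is attained at the corner $(\pi_i^+(\lambda))_i$ (resp.\ $(\hat\pi_i^+(\lambda))_i$); hence the range of $L(\lambda,\cdot)$ over this box is exactly $[\hat g(\lambda), g(\lambda)]$, and $L(\lambda,p_1,\dots,p_N)=0$ is solvable with $p_i \in [\pi_i^+(\lambda),\hat\pi_i^+(\lambda)]$ if and only if $\hat g(\lambda) \le 0 \le g(\lambda)$.

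The key step is the following strict monotone-gap claim: for $A_0 \le \lambda_1 < \lambda_2$ one has $\hat g(\lambda_1) > g(\lambda_2)$. The argument is short: since $H_i^+(\hat\pi_i^+(\lambda_1)) = \lambda_1 < \lambda_2 = H_i^+(\pi_i^+(\lambda_2))$ and $H_i^+$ is non-decreasing, necessarily $\hat\pi_i^+(\lambda_1) < \pi_i^+(\lambda_2)$ \emph{strictly} for every $i$ (equality would force $\lambda_1 = \lambda_2$ by continuity of $H_i^+$); as $\lambda_1 < \lambda_2$ as well, all $N+1$ arguments strictly increase and (L3) yields $\hat g(\lambda_1) > g(\lambda_2)$. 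In particular $g(\lambda_1) \ge \hat g(\lambda_1) > g(\lambda_2) \ge \hat g(\lambda_2)$, so both $g$ and $\hat g$ are strictly decreasing. Moreover $g(A_0) = L(A_0, p_1^0, \dots, p_N^0)$ (recall $p_i^0 = \pi_i^+(A_0)$), while (L5), together with $p_0 = \lambda \to +\infty$, gives $g(\lambda), \hat g(\lambda) \to -\infty$ as $\lambda \to +\infty$.

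Statement (i) is then immediate: $g(A_0) \le 0$ is exactly the first alternative in Definition~\ref{defi:al}, so $A_L = A_0$. For (ii) I assume $g(A_0) > 0$ and set $\lambda^* := \sup\{\lambda \ge A_0 : g(\lambda) > 0\}$, which is finite and $\ge A_0$ by the previous paragraph. Left-continuity of $g$ gives $g(\lambda^*) \ge 0$; for $\lambda > \lambda^*$ we have $\hat g(\lambda) \le g(\lambda) \le 0$, so right-continuity of $\hat g$ gives $\hat g(\lambda^*) \le 0$. By the solvability criterion of the first paragraph there exist $p_i^* \in [\pi_i^+(\lambda^*),\hat\pi_i^+(\lambda^*)]$ with $L(\lambda^*,p_1^*,\dots,p_N^*) = 0$, and since $\lambda^* \ge A_0$ we have $p_i^* \ge \pi_i^+(\lambda^*) \ge \pi_i^+(A_0) = p_i^0$, matching Definition~\ref{defi:al}. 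Uniqueness follows from the monotone-gap claim: if $\mu$ also admits such $p_i$, then $\hat g(\mu) \le 0 \le g(\mu)$; should $\mu < \lambda^*$ we would get $\hat g(\mu) > g(\lambda^*) \ge 0$, and should $\mu > \lambda^*$ we would get $\hat g(\lambda^*) > g(\mu) \ge 0$, each contradicting the sign information. Hence $\lambda^*$ is the unique value, i.e.\ $A_L = \lambda^*$, which is \eqref{eq:al}, and the inequalities \eqref{view:def}--\eqref{view:def+} are precisely $g(\lambda^*) \ge 0$ and $\hat g(\lambda^*) \le 0$ established above. Finally, for \eqref{eq:al+} I would show $\inf\{\lambda \ge A_0 : \hat g(\lambda) < 0\} = \lambda^*$: for $\lambda > \lambda^*$ strict monotonicity gives $\hat g(\lambda) < \hat g(\lambda^*) \le 0$, while for $A_0 \le \lambda < \lambda^*$ the monotone-gap claim gives $\hat g(\lambda) > g(\lambda^*) \ge 0$, so the infimum is exactly $\lambda^*$.

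The main obstacle is the uniqueness assertion in (ii): a priori the set of $\lambda$ for which $L=0$ is solvable, namely $\{\hat g \le 0 \le g\}$, could be a whole interval, which would make $A_L$ ill-defined. What rules this out is precisely the \emph{strict} separation $\hat\pi_i^+(\lambda_1) < \pi_i^+(\lambda_2)$ for $\lambda_1 < \lambda_2$ -- a consequence of the continuity of $H_i^+$ -- upgraded through (L3) to $\hat g(\lambda_1) > g(\lambda_2)$. Getting this strictness right, and correctly tracking the one-sided continuity of $\pi_i^+$ and $\hat\pi_i^+$ at the flat levels of $H_i^+$, is the only delicate point; the rest is bookkeeping with the monotonicity assumptions (L2)--(L5).
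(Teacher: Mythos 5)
Your proof is correct and follows essentially the same route as the paper: both arguments rest on the one-sided continuity of the generalized inverses $\pi_i^+$ and $\hat\pi_i^+$ and on upgrading the strict separation $\hat\pi_i^+(\lambda_1)<\pi_i^+(\lambda_2)$ for $\lambda_1<\lambda_2$ through (L3). Your single ``monotone-gap'' inequality $\hat g(\lambda_1)>g(\lambda_2)$ is just a clean repackaging of the two contradiction arguments ($I<S$ and $S<I$) that the paper uses to show $S=I$.
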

\begin{rem}
  We point out that
  $p_i^* \in [\pi_i^+(\lambda),\hat{\pi}_i^+ (\lambda^*)$ is equivalent
  to $p_i^* \ge p_i^0$ and $L(\lambda^*,p_1^*,\dots,p_N^*)=0$.
\end{rem}
\begin{proof}
  Remark that $p_i^0$ in Definition~\ref{defi:al} coincides with
  $\pi_i^+(A_0)$.  In particular, if
  \[L(A_0, \pi_1^+(A_0), \dots, \pi_N^+ (A_0)) \le 0\] then
  Definition~\ref{defi:al} says that $A_L = A_0$. This proves i). 

We now assume that $L(A_0, \pi_1^+(A_0), \dots, \pi_N^+ (A_0)) >0$.
Assumption~(L5) implies that there exists $\tilde \lambda > A_0$ such that $L(\tilde \lambda, 
\hat{\pi}_1^+(\tilde \lambda), \dots, \hat{\pi}_N^+ (\tilde \lambda)) < 0$. 
In particular, the two following quantities are finite, 
\begin{eqnarray*}
S:= \sup \{ \lambda \ge A_0 : L(\lambda,\pi_1^+(\lambda),\dots,\pi_N^+ (\lambda)) > 0 \} 
\\
I:=\inf \{ \lambda \ge A_0 : L (\lambda,\hat{\pi}_1^+ (\lambda), \dots,
\hat{\pi}_N^+ (\lambda)) < 0 \}.
\end{eqnarray*}
Using that $\pi_i^+$ is left continuous and $\hat{\pi}_i^+$ is right continuous,
we have 
\begin{eqnarray}
\label{eq:s}
 L (S, \pi_1^+(S), \dots, \pi_N^+ (S)) \ge 0 \\
\label{eq:i}
 L(I, \hat{\pi}_i^+ (I), \dots, \hat{\pi}_N^+ (I)) \le 0.
\end{eqnarray}
Proving ii), iii) and iv) (apart from uniqueness in ii)) reduces to
proving that $S=I$. Indeed, if $S=I$ then: iii) is proved with
$A_L=I=S$; \eqref{view:def} and \eqref{view:def+} are satisfied with
$A_L=I=S$; the continuity of $L$ (see (L1)) and the two previous
inequalities imply the existence of
$p_i^* \in [\pi_i^+(A_L),\hat{\pi}_i^+ (A_L)]$ such that
$L (A_L,p_1^*,\dots,p_N^*) = 0$.

If $I < S$, then $\hat{\pi}_i^+ (I) < \pi_i^+ (S)$ for all
$i \in \{1,\dots, N\}$; but (L3) and \eqref{eq:s} then imply that
\[ L(I, \hat{\pi}_1^+ (I), \dots, \hat{\pi}_N^+ (I)) >  L (S, \pi_1^+(S), \dots, \pi_N^+ (S)) \ge 0 \]
which contradicts \eqref{eq:i}. Then $S \le I$. 

If $S < I$ then the definitions of $S$ and $I$ imply that for all $\lambda^* \in ]S,I[$, 
\begin{eqnarray*}
 L (\lambda^*, \pi_1^+(\lambda^*), \dots, \pi_N^+ (\lambda^*)) \le 0 \\
 L(\lambda^*, \hat{\pi}_1^+ (\lambda^*), \dots, \hat{\pi}_N^+ (\lambda^*)) \ge 0.
\end{eqnarray*}
But using the continuity of $L$ (see (L1)), this implies that for all $\lambda^* \in ]S,I[$, 
there exist $p_i^* \in [\pi_i^+(\lambda^*),\hat{\pi}_i^+(\lambda^*)]$, $i=1,\dots, N$, such that 
\[ L(\lambda^*,p_1^*, \dots, p_N^*) =0.\]
But this cannot be true for two different $\lambda^*$'s because of
(L3). Hence $S=I$. Notice that we can prove in the same way uniqueness
in ii). The proof is now complete.
\end{proof}

We now prove the main theorem.
\begin{proof}[Proof of Theorem~\ref{thm:class}] 
  Let $A_L$ be the effective flux limiter in the sense of
  Definition~\ref{defi:al}. It is well defined thanks to
  Proposition~\ref{prop:al}. Since $A_L \ge A_0$, the coercivity is
  clear:
  $\lim_{|p|\to +\infty} \inf_{x' \in \Gamma} A(t,x',p') =+\infty$.  The
  proof of the continuity of $A_L$ and the convexity of sublevel sets
  is the same as in \cite[Proof of Theorem~2.13]{im14}.

  We only deal with the sub-solution case since the super-solution
  case is very similar. If $A_L(t,x',p')=A_0(t,x',p')$, then
  Lemma~\ref{lem:weak-cont} and Theorem~\ref{thm:reduced} imply that
  any $L$-relaxed sub-solution of \eqref{eq:dp-gen} is an $A_0$-flux
  limited sub-solution of \eqref{eq:dp-gen}.

  We now consider the case where there exists $(t,x',p')$ such that
  $A_L(t,x',p') >A_0(t,x',p')$.  Let $u$ be an $L$-relaxed
  sub-solution of \eqref{eq:dp-gen} and let us prove that it is an
  $(A_L-\eps)$-flux-limited sub-solution of \eqref{eq:dp-gen} at
  $(t,x') \in (0,T)\times \Gamma$ for all $\eps >0$ such that $A_L-\eps >A_0$ (at
  $(t,x',p')$). We use here the fact that Theorem~\ref{thm:reduced} is
  local in the sense that it asserts that a function is a flux-limited
  solution at one given point $(t,x')\in (0,T) \times \Gamma$. In view of
  Lemma~\ref{lem:weak-cont} and Theorem~\ref{thm:reduced}, we only
  have to consider a reduced test function $\varphi$ touching $u$ from
  above at $(t,x') \in (0,T) \times \Gamma$. We recall that
\[ \varphi (s,y) = \phi (s,y') + \phi_i (y_i)\]
with $\phi_i (0)=0$ and $\phi_i' (0)  \in [\pi_i^+ (A_L-\eps),\hat{\pi}_i^+ (A_L-\eps)]$.
In order to emphasize the interval in which $\phi_i'(0)$ lies, we write $\pi_i^* (A_L-\eps):=\phi_i' (0)$.  
By definition of relaxed solutions, we have 
\begin{eqnarray} 
\label{eq:2}
  \text{ either } && L(\lambda , \pi_1^*(A_L-\eps),\dots,\pi_N^*(A_L-\eps)) \le 0 \\
\label{eq:3}    \text{ or }  && - \lambda + (A_L-\eps) \le 0
\end{eqnarray}
with $\lambda = -\partial_t \phi(t_0,x_0)$. 

We claim that \eqref{eq:3} always holds true.  We argue by
contradiction by assuming that \( (A_L-\eps) > \lambda \).
In particular $A_L > \lambda$ and $\pi_i^+(A_L) > \pi_i^*(A_L-\eps)$
for $i=1,\dots, N$.  Using (L3) and \eqref{eq:2} successively, we have
\[ L (A_L, \pi_1^+(A_L),\dots,\pi_N^+(A_L)) < L (\lambda,
\pi_1^*(A_L-\eps),\dots,\pi_N^*(A_L-\eps)) \le 0 \]
which contradicts \eqref{view:def}. The reader may remark that the
contradiction cannot be reached without the use of $\eps$.

We now consider
\[
 L_{A_L-\eps}(-\partial_t \varphi,\partial_1 \varphi, \dots, \partial_N \varphi,x'_0,D'\varphi) 
=  
\partial_t \phi (t_0,x_0') + \max (A_L-\eps,\max_i H_i^- (\pi_i^*(A_L-\eps))). 
\]
where the derivatives of $\varphi$ in the left hand side are computed
at $(t_0,x_0)$. 

Remark now that $H_i^- (\pi_i^*(A_L-\eps))= \min_{p_i \in \R} H_i(p_i)$ and in
particular $\max_i H_i^- (\pi_i^*(A_L-\eps)) = A_0$. 
Since $A_L -\eps > A_0$ and $\lambda =  -\partial_t \phi(t_0,x_0)$, 
the previous equality and \eqref{eq:3} yield
\[
 L_{A_L-\eps}(-\partial_t \varphi,\partial_1 \varphi, \dots, \partial_N \varphi,x'_0,D'\varphi)  
=  -\lambda + A_L(x'_0,p'_0) -\eps\le 0
\]
which is the desired inequality. The proof is now complete. 
\end{proof}

\section{The vanishing viscosity limit}
\label{sec:vvl}

This section is devoted to the study of the limit (as $\eps \to 0$) of the solution
$u^\eps$ of the following Hamilton-Jacobi equation posed on a multi-dimensional junction $J$, 
\begin{equation}\label{eq:dp-visc}
 \left\{
\begin{array}{ll}
u^\eps_t +  H_i(t,x,Du^\eps) = \eps \Delta u^\eps & (t,x)\in (0,T) \times J_i^*, \medskip \\
L(-u^\eps_t,\partial_1 u^\eps,\dots,\partial_N u^\eps,t,x',D'u^\eps)=0 & (t,x) \in (0,T)\times \Gamma
\end{array}
\right.
\end{equation}
subject to the initial condition
\begin{equation}\label{eq:ic-bis}
u(0,x) = u_0 (x), \quad x \in J.
\end{equation}
Notice that this equation is not of the form~\eqref{eq:dp-gen} since
the diffusion does not degenerate along the junction hyperplane. In
particular, Theorem~\ref{thm:reduced} does not hold true anymore in
this case since it uses the degeneracy along $\Gamma$ in an essential
way. Still, we can consider relaxed solutions as in
Definition~\ref{defi:relaxed}, even if we expect solutions to be
classical -- see Remark~\ref{rem:c2} below. As we shall see it, the
solutions $u^\eps$ converge towards the solution of
\begin{equation}\label{eq:hj}
 \left\{
\begin{array}{ll}
u_t +  H_i(t,x,Du) = 0 & (t,x)\in (0,T) \times J_i^*, \medskip \\
L(-u_t,\partial_1 u,\dots,\partial_N u,t,x',D'u)=0 & (t,x) \in (0,T)\times\Gamma.
\end{array}
\right.
\end{equation}

 The first result applies to general junction functions $L$.
\begin{thm}[Vanishing viscosity limit]\label{thm:vvl}
  Assume  \emph{(L)}  and 
  \[ \begin{cases}
    H_i \text{ continuous }\\
    \forall (t,x') \in (0,T) \times \Gamma, \lambda \in \R, \;\;\{ p=(p',p_i) \in \R^{d+1}: H_i (t,x',p) \le \lambda \} \text{ convex  }  \\
    \lim_{|p|\to + \infty} \inf_{(t,x') \in (0,T) \times \Gamma} H_i
    (t,x',p) = + \infty.
\end{cases}
\]
Let $u_0$ be uniformly continuous in $J$.  Assume there exists a
relaxed solution $u^\eps$ of \eqref{eq:dp-visc}, \eqref{eq:ic-bis} and
a constant $C$ such that $|u^\eps(t,x)-u_0(x)| \le Ct$ for all
$(t,x) \in (0,T) \times J$. Then $u^\eps$ converges locally uniformly
towards the unique relaxed solution $u$ of \eqref{eq:hj},
\eqref{eq:ic-bis}.
\end{thm}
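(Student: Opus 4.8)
The plan is to run the half-relaxed limits method of Barles and Perthame, exploiting that the \emph{relaxed} junction condition (unlike the strong flux-limited one) is stable under such limits, and then to convert the resulting relaxed sub- and super-solutions into flux-limited ones via Theorem~\ref{thm:class} so as to apply a comparison principle. The assumption $|u^\eps(t,x)-u_0(x)| \le Ct$ ensures that $\{u^\eps\}_{\eps>0}$ is locally uniformly bounded, so the upper and lower semi-limits $\bar u$ and $\underline u$ (in the sense recalled before Proposition~\ref{prop:stab2}) are well defined and finite, with $\underline u \le \bar u$ everywhere.

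First I would show that $\bar u$ is an $L$-relaxed sub-solution and $\underline u$ an $L$-relaxed super-solution of \eqref{eq:hj}. The argument is that of Proposition~\ref{prop:stab2}, the only new feature being the non-degenerate term $\eps\Delta u^\eps$ in \eqref{eq:dp-visc}. Given a test function $\phi$ strictly touching $\bar u$ from above at $(t,x)$, one finds $(t_k,x_k)\to (t,x)$ and $\eps_k \to 0$ such that $\phi$ touches $u^{\eps_k}$ from above at $(t_k,x_k)$. If $x_k \in J_i^*$ along a subsequence, the viscosity inequality reads $\phi_t + H_i(t_k,x_k,D\phi) \le \eps_k \Delta \phi$, and since $\Delta\phi$ is bounded near $(t,x)$ the term $\eps_k\Delta\phi \to 0$, yielding $\phi_t + H_i(t,x,D\phi)\le 0$. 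If instead $x_k \in \Gamma$ for large $k$, then for each $k$ the relaxed condition for \eqref{eq:dp-visc} at $\Gamma$ gives either one of the branch sub-solution inequalities for some $i$ (each of which loses its $\eps_k$-viscosity term in the limit), or $L(-\phi_t,\partial_1\phi,\dots,\partial_N\phi,t_k,x_k',D'\phi)\le 0$; letting $k\to +\infty$ and using the continuity of $H_i$ and of $L$ (see (L1)), one recovers the relaxed junction condition for \eqref{eq:hj} at $(t,x)$. This is precisely where the \emph{relaxed} formulation is essential: it is stable under the half-relaxed limits, whereas the strong flux-limited form of the condition is not directly so. The super-solution property of $\underline u$ is obtained symmetrically.

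Next I would identify the initial trace: from $u_0(x) - Ct \le u^\eps(t,x) \le u_0(x)+Ct$ and the uniform continuity of $u_0$, passing to the semi-limits gives $\bar u(0,\cdot)\le u_0 \le \underline u(0,\cdot)$, hence $\bar u(0,\cdot)=\underline u(0,\cdot)=u_0$ since $\underline u \le \bar u$. Now $F_i = H_i$ satisfies \emph{(F)} (this is the first order situation of Example~\ref{ex:one}), so Theorem~\ref{thm:class} applies to \eqref{eq:hj}: $\bar u$ is an $A_L$-flux-limited sub-solution and $\underline u$ an $A_L$-flux-limited super-solution. Invoking the comparison principle for flux-limited solutions of first order Hamilton-Jacobi equations on junctions \cite{im14}, together with the matching initial data, yields $\bar u \le \underline u$ on $(0,T)\times J$. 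Combined with $\underline u \le \bar u$ this forces $u:=\bar u = \underline u$, which is therefore continuous; it is a relaxed (equivalently $A_L$-flux-limited) solution of \eqref{eq:hj}, \eqref{eq:ic-bis}, and the coincidence of the two semi-limits gives the local uniform convergence of $u^\eps$ to $u$. Uniqueness follows from the same comparison principle applied to any two relaxed solutions after converting them via Theorem~\ref{thm:class}.

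The main obstacle is less the stability step---which becomes routine once one observes that $\eps_k\Delta\phi$ vanishes---than the conceptual passage from the weak (relaxed) junction condition, which alone is stable under the vanishing viscosity limit, to the strong (flux-limited) condition needed for comparison; Theorem~\ref{thm:class} is exactly what bridges this gap. A secondary technical point is to ensure that the comparison principle can be applied up to the initial time $t=0$ with the matched datum $u_0$, which requires the usual care with the initial layer.
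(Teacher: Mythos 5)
Your proposal is correct and follows essentially the same route as the paper's proof: half-relaxed limits yield an $L$-relaxed sub-solution $\bar u$ and super-solution $\underline u$ of \eqref{eq:hj}, Theorem~\ref{thm:class} converts them into $A_L$-flux-limited sub- and super-solutions, and the comparison principle of \cite{im14} together with the matched initial trace forces $\bar u \le \underline u$, hence local uniform convergence. The paper's version is merely terser, invoking ``discontinuous stability'' without spelling out that the $\eps\Delta\phi$ term vanishes in the limit.
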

\begin{rem} \label{rem:c2}
  Even if we will not discuss it, the existence of solutions whose
  restriction to $J_i$ are $C^{1,1} (J_i) \cap C^2 (J_i^*)$ is
  expected in the case of \eqref{eq:dp-visc}. Some results are proved
  in \cite{b1,b2} on compact junctions and some others are announced
  in \cite{ls}.
\end{rem}
\begin{rem}
  As we previously mentioned it, a special case of the theorem is
  proved in \cite{cms13}.
\end{rem}
\begin{proof}[Proof of Theorem~\ref{thm:vvl}]
  By discontinuous stability, the relaxed upper limit $\bar u$ of
  $u^\eps$ is an $L$-relaxed sub-solution of \eqref{eq:hj}, i.e. an
  $A_L$-flux-limited sub-solution of \eqref{eq:hj} (by
  Theorem~\ref{thm:class}). The relaxed lower limit $\underline u$ is
  an $L$-relaxed super-solution of \eqref{eq:hj}, i.e. an
  $A_L$-flux-limited super-solution of \eqref{eq:hj} (by
  Theorem~\ref{thm:class} again). Moreover, the fact that
  $|u^\eps(t,x)-u_0(x)| \le Ct$ holds true for all
  $(t,x) \in (0,T) \times J$ implies that
  $\bar u (0,x) = u_0 (x) = \underline{u} (0,x)$ for all $x \in J$. By
  comparison principle \cite[Theorem~1.3]{im14}, we conclude that
  $\bar u \le \underline{u}$ which yields the local uniform
  convergence towards the unique $A_L$-flux-limited solution of
  \eqref{eq:hj}, \eqref{eq:ic-bis} which coincides with the relaxed
  solution (by Theorem~\ref{thm:class}).
\end{proof}

Problem~\eqref{eq:vvl} can be translated 
 into the junction framework as follows, 
\[ 
\begin{cases}
u^\eps_t + H_i (t,x,D u^\eps)=\eps \Delta u^\eps, & (t,x) \in (0,T) \times J_i^*, \quad i=1,2 \\
- \partial_1 u^\eps (t,x',0)- \partial_2 u^\eps(t,x',0) = 0, & (t,x') \in (0,T) \times \Gamma \\
u^\eps(0,x) = u_0 (x), & x \in J
\end{cases}
\]
with $H_1(x,p',p_{d+1}) = \tilde{H}_1(x,p',-p_{d+1})$ and
$H_2(x,p',p_{d+1}) = \tilde{H}_2 (x,p',p_{d+1})$.  
In view of Theorem~\ref{thm:vvl}, $u^\eps$ converges towards the relaxed solution
\begin{equation} \label{eq:hj-lim}
 \begin{cases} u_t + H_i (t,x,Du) = 0, &  (t,x) \in (0,T) \times J_i^* \\ 
u(0,x) = u_0 (x), & x \in J \end{cases} 
\end{equation}
associated with the generalized flux function 
\[ L_e (p_0,p_1,p_2,t,x',p') = -p_1 -p_2.\]
\begin{cor}[The vanishing viscosity limit for the Kirchoff condition]\label{cor:ae}
  The solution $u^\eps$ of \eqref{eq:dp-visc}, \eqref{eq:ic-bis}
  converges towards the $A_e$-flux-limited solution of
  \eqref{eq:hj-lim} where $A_e(t,x',p')$ is determined as follows: if
  $p_1^0 (t,x',p')+ p_2^0(t,x',p') \ge 0$ then
  $A_e(t,x',p')=A_0(t,x',p')$; else $A_e (t,x',p')$ is the unique
  $\lambda \ge A_0(t,x',p')$ such that there exist
  $p_1^{+,e} (t,x',p')\ge p_1^0(t,x',p')$ and
  $p_2^{+,e} (t,x',p') \ge p_2^0(t,x',p')$ such that
\[ H_i (t,x',p',p_i^{+,e}(t,x',p')) = \lambda \text{ for } i=1,2, \quad p_1^{+,e} (t,x',p')+ p_2^{+,e}(t,x',p')=0.\]
\end{cor}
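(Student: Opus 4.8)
The plan is to read Corollary~\ref{cor:ae} as the combination of the two main theorems applied to the junction reformulation of \eqref{eq:vvl}, together with an explicit evaluation of the effective flux limiter associated with the Kirchoff function $L_e(p_0,p_1,p_2)=-p_1-p_2$. Concretely, I would first invoke Theorem~\ref{thm:vvl} to obtain that $u^\eps$ converges locally uniformly to the unique relaxed solution of \eqref{eq:hj-lim} associated with $L_e$, and then invoke Theorem~\ref{thm:class} to identify this relaxed solution with the $A_{L_e}$-flux-limited solution. The whole content of the corollary then amounts to computing $A_{L_e}$ from Definition~\ref{defi:al} and checking that it equals $A_e$.

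Before applying the two theorems I would verify that the data fit their hypotheses. The Hamiltonians $H_1(x,p',p_{d+1})=\tilde H_1(x,p',-p_{d+1})$ and $H_2=\tilde H_2$ inherit continuity, quasi-convexity of their sublevel sets and coercivity directly from the assumptions on $\tilde H_1,\tilde H_2$, and $L_e$ obviously satisfies (L1), (L2) and the strict monotonicity (L3). The delicate point -- and what I expect to be the main obstacle -- is that $L_e$ does not depend on $p_0$, so the coercivity assumption (L5) fails as literally stated (sending $p_0\to+\infty$ does not drive $L_e$ to $-\infty$). I would resolve this by noting that (L5) enters the proofs of Proposition~\ref{prop:al} and Theorem~\ref{thm:class} only through its single consequence that $L_e(\lambda,\hat\pi_1^+(\lambda),\hat\pi_2^+(\lambda))<0$ for $\lambda$ large enough; for $L_e$ this reads $-\hat\pi_1^+(\lambda)-\hat\pi_2^+(\lambda)<0$, which holds because $\hat\pi_i^+(\lambda)\to+\infty$ as $\lambda\to+\infty$ by coercivity of $H_i$. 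Similarly, the use of (L4) in Lemma~\ref{lem:weak-cont} survives because sending a single slope $p_i\to-\infty$ while the others stay bounded forces $L_e\to+\infty$. Alternatively, the a priori bound $|u^\eps-u_0|\le Ct$ confines $p_0=-u_t$ to a bounded set, so one may replace $L_e$ by a truncation in $p_0$ that satisfies (L5) and coincides with $L_e$ on the relevant region, leaving the relaxed solutions and the value of $A_{L_e}$ unchanged.

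It remains to specialize Definition~\ref{defi:al}. Recall that $p_i^0=\pi_i^+(A_0)$ is the minimal slope at which $H_i$ reaches $A_0$. Evaluating the dichotomy of the definition at $L=L_e$, the condition $L_e(A_0,p_1^0,p_2^0)\le 0$ becomes $-p_1^0-p_2^0\le 0$, i.e. $p_1^0+p_2^0\ge 0$, in which case $A_{L_e}=A_0=A_e$. In the complementary case $p_1^0+p_2^0<0$, the defining identity $L_e(\lambda,p_1^+,p_2^+)=0$ reads $p_1^++p_2^+=0$ with $p_i^+\ge p_i^0$ and $H_i(p_i^+)=\lambda$, which is precisely the characterization of $A_e$ with $p_i^{+,e}=p_i^+$; uniqueness of $\lambda$ is furnished by Proposition~\ref{prop:al}. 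This yields $A_{L_e}=A_e$ and completes the proof.
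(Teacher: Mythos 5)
Your proposal is correct and matches the paper's (implicit) argument exactly: the corollary is stated without a separate proof, as the direct specialization of Theorem~\ref{thm:vvl} and Theorem~\ref{thm:class} to the Kirchoff function $L_e(p_0,p_1,p_2)=-p_1-p_2$, with $A_e=A_{L_e}$ read off from Definition~\ref{defi:al} exactly as you do ($L_e(A_0,p_1^0,p_2^0)\le 0$ iff $p_1^0+p_2^0\ge 0$, and otherwise $L_e(\lambda,p_1^+,p_2^+)=0$ iff $p_1^++p_2^+=0$). Your discussion of (L5) is a careful bonus that the paper does not address (it simply presents Kirchoff conditions as a model example of~(L)), and your resolution --- that (L5) is only ever invoked to produce $\tilde\lambda$ with $L(\tilde\lambda,\hat{\pi}_1^+(\tilde\lambda),\dots,\hat{\pi}_N^+(\tilde\lambda))<0$, which holds for $L_e$ since $\hat{\pi}_i^+(\lambda)\to+\infty$ by coercivity of the $H_i$ --- is sound and is exactly how the hypothesis is used in Proposition~\ref{prop:al}.
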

\begin{rem}
  If $H_1$ and $H_2$ has no constant parts and
  $p_1^0 (t,x',p')+ p_2^0(t,x',p') \le 0$, then $A_e(t,x',p')$ is the only
  $A \in \R$ such that \(\pi_1^+ (t,x',p',A) + \pi_2^+(t,x',p',A) =0\).
\end{rem}

We now recall the result about maximal and minimal Ishii solutions from \cite[Proposition~4.1]{im14}. 
\begin{prop}[Maximal and minimal Ishii solutions -- {\cite[Proposition~4.1]{im14}}] \label{prop:ishii} 
The maximal (respectively the  minimal) Ishii solution of \eqref{eq:hjtilde} corresponds to the
  $A_I^-$ (respectively $A_I^+$) flux-limited solution of
  \eqref{eq:hj-lim} with
\[ \begin{cases}
 A_I^+ (t,x',p') = \max (A_0 (t,x',p'), A^* (t,x',p')) \\
A_I^- (t,x',p') = \begin{cases} A_I^+ (t,x',p') & \text{ if } \;\; \pi^0_2 (t,x',p') + \pi^0_1 (t,x',p') \le 0\\
A_0 (t,x',p') & \text{ if } \;\;  \pi^0_2 (t,x',p') +\pi^0_1 (t,x',p') \ge 0\end{cases}
\end{cases}\]
where 
\[ A^*(t,x',p') = \max_{p_{d+1} \in I(t,x',p')} \big( \min (H_2 (t,x',p',p_{d+1}), H_1 (t,x',p',-p_{d+1}) \big)  \]
and $I(t,x',p')=[\min(-\pi^0_1 (t,x',p'), \pi_2^0 (t,x',p')), \max(-\pi^0_1 (t,x',p'),\pi_2^0 (t,x',p')) ]$.  
\end{prop}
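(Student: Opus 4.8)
The plan is to transport the two–domain problem \eqref{eq:hjtilde} onto the junction through the identification $v(t,x',x_{d+1})=u(t,x',|x_{d+1}|)$, under which the half–space $\{x_{d+1}<0\}$ becomes the branch $J_1$ with $H_1(x,p',p_1)=\tilde H_1(x,p',-p_1)$ and $\{x_{d+1}>0\}$ becomes $J_2$ with $H_2=\tilde H_2$; this is the change of variables already used for \eqref{eq:hj-lim}. A smooth test function for $v$ in $\R^{d+1}$ becomes a junction test function whose normal slopes obey the Kirchhoff relation $\partial_1\varphi+\partial_2\varphi=0$, since $\partial_1\varphi=-\partial_{d+1}\varphi$ and $\partial_2\varphi=\partial_{d+1}\varphi$. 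First I would record that, at an interface point, the Ishii notion of solution for the discontinuous first–order term uses the two envelopes of the Hamiltonian: a test function touching from above yields $\varphi_t+\min\big(H_1(\partial_1\varphi),H_2(\partial_2\varphi)\big)\le 0$, a test function touching from below yields $\varphi_t+\max\big(H_1(\partial_1\varphi),H_2(\partial_2\varphi)\big)\ge 0$, and in both cases only Kirchhoff slopes are tested.

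The strategy is then to sandwich the Ishii solutions between flux–limited ones. By the comparison principle \cite[Theorem~1.3]{im14} the $A$–flux–limited solution $u_A$ is nonincreasing in $A$, so it suffices to show that $u_{A_I^-}$ and $u_{A_I^+}$ are themselves Ishii solutions and that every Ishii solution lies between them; the maximal one is then $u_{A_I^-}$ and the minimal one $u_{A_I^+}$. The implication \emph{flux–limited $\Rightarrow$ Ishii} is the easy direction: a reduced test function (Definition~\ref{defi:test-reduced}) can be completed into a genuine Kirchhoff test function for $v$, so Theorem~\ref{thm:reduced} together with the critical–slope Lemmas~\ref{first tec} and~\ref{second tec} lets me read the two Ishii inequalities off the single flux–limited equality $\varphi_t+A=0$. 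For the reverse bound I would show that an Ishii subsolution is an $A_I^-$–flux–limited subsolution and an Ishii supersolution an $A_I^+$–flux–limited supersolution: the weak continuity of Lemma~\ref{lem:weak-cont} lets me replace an arbitrary test function by the critical normal slope on each branch, at which the Kirchhoff–constrained Ishii inequality becomes exactly the reduced flux–limited inequality; comparison against $u_{A_I^-}$, resp.\ $u_{A_I^+}$, then gives the squeezing.

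The value of the effective limiter reduces to a one–dimensional problem in the normal slope $q=p_{d+1}$, the Kirchhoff constraint reading $p_1=-q$, $p_2=q$. The quantity $A^*$ is the level at which the two normal profiles $q\mapsto H_2(t,x',p',q)$ and $q\mapsto H_1(t,x',p',-q)$ cross when $q$ is confined to the interval $I(t,x',p')$ between the minimizers $\pi_2^0$ and $-\pi_1^0$; equivalently it is the largest level the interface can sustain in the $\min$ sense, which is why the minimal Ishii solution carries $A_I^+=\max(A_0,A^*)$. For the maximal solution the sign of $\pi_1^0+\pi_2^0$ decides whether a Kirchhoff slope already realizes the intrinsic level $A_0$: if $\pi_1^0+\pi_2^0\ge 0$ such a slope exists, the supersolution constraint relaxes and $A_I^-=A_0$; if $\pi_1^0+\pi_2^0\le 0$ no admissible slope lies below $A^*$, the flux is forced up to $A^*$, and the two Ishii solutions coincide ($A_I^-=A_I^+$).

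I expect the main obstacle to be this pinning of the exact value $A_I^-$ on the supersolution side, namely certifying that the maximal Ishii solution obeys the flux–limited supersolution inequality with the sharp limiter $A_I^-$ and not with a larger one. This is exactly where the sign of $\pi_1^0+\pi_2^0$ and the geometry of the monotone parts $H_i^\pm$ must be exploited, through the critical normal slope of Lemma~\ref{first tec}, in order to exclude any larger sustainable flux; by contrast the subsolution direction and the single–variable optimization defining $A^*$ are routine for quasi–convex Hamiltonians. As the statement is quoted from \cite[Proposition~4.1]{im14}, the remaining point is simply to observe that this junction computation is purely first order, so that the critical–slope lemmas of Section~\ref{sec:reduced} supply precisely the tools needed to reproduce it.
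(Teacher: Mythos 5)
You should first note that the paper does not prove Proposition~\ref{prop:ishii} at all: it is imported verbatim from \cite[Proposition~4.1]{im14} (the surrounding text reads ``We now recall the result about maximal and minimal Ishii solutions''), and the only argument in this paper that touches these formulas is the proof of Theorem~\ref{thm:vvl-evol}, which takes the proposition as given and merely checks $A_e=A_I^-$ through the three-case picture. So there is no in-paper proof to compare your attempt against; a self-contained proof was never intended here.

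Judged on its own terms, your outline has the right architecture (sandwiching every Ishii solution between $u_{A_I^+}$ and $u_{A_I^-}$, monotonicity of $u_A$ in $A$, the case analysis on the sign of $\pi_1^0+\pi_2^0$), but it misplaces the tools and leaves the decisive step open. For ``flux-limited $\Rightarrow$ Ishii'' you do not need reduced test functions or the critical-slope lemmas at all: a $C^1$ test function for $v$ is already an admissible junction test function with $\partial_1\varphi+\partial_2\varphi=0$, and the entire content is the pointwise inequality between $L_A$ evaluated at Kirchhoff slopes and the Ishii envelopes; for the subsolution one must check $\min\bigl(H_1(-q),H_2(q)\bigr)\le\max\bigl(A,H_1^-(-q),H_2^-(q)\bigr)$ for every $q$, which can fail when $A<A^*$ and $\pi_1^0+\pi_2^0<0$ --- this computation, absent from your sketch, is precisely what pins down the formula for $A_I^-$. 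Conversely, for ``Ishii $\Rightarrow$ flux-limited'' the reduced test functions of Definition~\ref{defi:test-reduced} are the right tool, but they are \emph{not} Kirchhoff test functions: their normal slopes lie in $[\pi_i^+(A),\hat{\pi}_i^+(A)]$ and in general $\pi_1^+(A)+\pi_2^+(A)\neq 0$, so the Ishii inequalities, which only see test functions with $p_1+p_2=0$, cannot be ``read off'' on them. Bridging this requires combining Lemmas~\ref{first tec}--\ref{second tec} (which use only the branch equations) with the construction of an admissible Kirchhoff test function out of the critical slopes, and this is exactly the step you flag as ``the main obstacle'' without resolving it. As written your proposal is an accurate plan rather than a proof; for the missing details one must return to \cite{im14}, or to \cite{bbc1,bbc2,bbcim} where the same identification is obtained through the control-theoretic representations of the maximal and minimal Ishii solutions.
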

\begin{rem}
The functions $p_i^0$ and $\pi_i^0$ are different. The Hamiltonian $H_i$ achieves its minimum at $\pi_i^0$ and 
it reaches the value $A_0$ at $p_i^0$. The only case where these functions coincide is when $A_0 = \min_{p_i} H_i (p_i)$ 
but in general $A_0 \ge  \min_{p_i} H_i (p_i)$.
\end{rem}
We now prove the following theorem, which is equivalent to Theorem~\ref{thm:vvl-evol-tilde}.
\begin{thm}[The vanishing viscosity limit selects the maximal Ishii solution]\label{thm:vvl-evol}
Assume
\[ \begin{cases}
H_i \text{ continuous }\\
\{ p \in \R^{d+1} : H_i (t,x',0,p) \le \lambda \} \text{ convex for all } \lambda \in \R, \\
\lim_{|p|\to + \infty} \inf_{(t,x') \in (0,T) \times \Gamma} H_i (t,x',p) = + \infty. 
\end{cases}
\]
Then the relaxed solution $u^\eps$ of \eqref{eq:dp-visc}, \eqref{eq:ic-bis} converges
towards the unique $A_I^-$-flux-limited solution of
\[ \begin{cases} u_t + H_i (x,Du) = 0, &  x \in J_i^* \\ u(0,x) = u_0 (x), & x \in J. \end{cases} \]
\end{thm}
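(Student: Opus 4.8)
The plan is to reduce the statement to a purely pointwise comparison between two explicit flux limiters built from $H_1$ and $H_2$. By Corollary~\ref{cor:ae} (which is itself the application of Theorem~\ref{thm:vvl} to the Kirchoff junction function $L_e(p_0,p_1,p_2)=-p_1-p_2$, the Hamiltonians satisfying the required continuity, convexity and coercivity hypotheses), the solution $u^\eps$ converges locally uniformly to the unique $A_e$-flux-limited solution of the limit equation, $A_e$ being the flux limiter described there; the bound $|u^\eps(t,x)-u_0(x)|\le Ct$ ensures the initial datum is attained and the comparison principle \cite[Theorem~1.3]{im14} gives uniqueness. Comparing with Proposition~\ref{prop:ishii}, which identifies the maximal Ishii solution as the $A_I^-$-flux-limited solution, the theorem is equivalent to the pointwise identity $A_e(t,x',p')=A_I^-(t,x',p')$ for all $(t,x',p')$.

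To establish this identity I would fix $(t,x',p')$, drop it from the notation, and argue by case analysis, assuming without loss of generality that $A_0=\min_{p_1}H_1\ge \min_{p_2}H_2$ (the opposite case follows by exchanging the two branches). Under this normalization $p_1^0=\pi_1^0$, so the dichotomy defining $A_e$ in Corollary~\ref{cor:ae} is governed by the sign of $\pi_1^0+\pi_2^+(A_0)$, whereas the dichotomy defining $A_I^-$ in Proposition~\ref{prop:ishii} is governed by the sign of $\pi_1^0+\pi_2^0$. Since $\pi_2^+(A_0)\ge \pi_2^0$, whenever $\pi_1^0+\pi_2^0\ge 0$ one also has $p_1^0+p_2^0\ge 0$, and both definitions return $A_0$; this disposes of the first case.

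The core of the argument is the remaining case $\pi_1^0+\pi_2^0<0$, where $A_I^-=\max(A_0,A^*)$ and $A^*$ is the max-min value over the interval $I=[\pi_2^0,-\pi_1^0]$. I would analyze the two competing profiles along the constraint $p_1+p_2=0$: on $I$ one has $-p_{d+1}\ge \pi_1^0$, so $H_1(-p_{d+1})\ge A_0$ is non-increasing in $p_{d+1}$, while $H_2(p_{d+1})$ is non-decreasing and starts from $\min H_2\le A_0$. Hence $g(p_{d+1}):=\min(H_1(-p_{d+1}),H_2(p_{d+1}))$ increases then decreases, and its maximum $A^*$ is attained either at a level crossing $H_1(-p_{d+1}^*)=H_2(p_{d+1}^*)$ or at the right endpoint. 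When $\pi_2^+(A_0)\ge -\pi_1^0$ (that is, $p_1^0+p_2^0\ge 0$), one has $H_2\le A_0\le H_1$ throughout $I$, so $A^*=\max_I H_2=H_2(-\pi_1^0)\le A_0$ and both definitions give $A_0$. When $\pi_2^+(A_0)<-\pi_1^0$ (that is, $p_1^0+p_2^0<0$) the crossing lies strictly inside $I$ at a level $v\ge A_0$, and setting $p_1=-p_{d+1}^*$, $p_2=p_{d+1}^*$ one reads off $\pi_1^+(v)+\pi_2^+(v)=0$, which is exactly the equation characterizing $A_e$ in Corollary~\ref{cor:ae}; hence $A^*=v=A_e$ and $A_I^-=\max(A_0,A^*)=A_e$.

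The delicate point is the rigorous treatment of the bottleneck when the Hamiltonians have flat (constant) parts, so that the level crossing is realized by a whole interval of slopes rather than a single value. This is precisely the situation encapsulated by the functions $\pi_i^+$, $\hat{\pi}_i^+$ and by the (non-unique) minimizers $p_i^{+,e}$ in Corollary~\ref{cor:ae}; the monotonicity and one-sided continuity of $\pi_i^+$ and $\hat{\pi}_i^+$ recorded in Proposition~\ref{prop:al} then guarantee that the max-min value $A^*$ and the Kirchoff level $A_e$ still coincide. The identity $A_e=A_I^-$ thus persists, and the announced convergence to the unique $A_I^-$-flux-limited solution follows.
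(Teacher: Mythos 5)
Your proposal is correct and follows essentially the same route as the paper: reduce via Corollary~\ref{cor:ae} and Proposition~\ref{prop:ishii} to the pointwise identity $A_e=A_I^-$, then compare the non-increasing profile $H_1(-\cdot)$ with the non-decreasing profile $H_2(\cdot)$ on the interval $[\pi_2^0,-\pi_1^0]$ by cases. The only (cosmetic) difference is your normalization $A_0=\min H_1$, which collapses the paper's three geometric cases into two; your explicit handling of the flat-part/non-uniqueness issue via $\pi_i^+$, $\hat{\pi}_i^+$ is, if anything, slightly more careful than the paper's.
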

\begin{proof}
  Once again, the tangential variables $(t,x',p')$ are not shown in
  order to clarify the presentation.

In view of Corollary~\ref{cor:ae}, we only have to prove that $A_e=A_I^-$
where $A_I^-$ is given by Proposition~\ref{prop:ishii}. 

If $\pi_1^0 + \pi_2^0 \ge 0$, then we know on the one hand from
Proposition~\ref{prop:ishii} that $A_I^-=A_0$ and on the other hand,
since $p_1^0 + p_2^0 \ge \pi_1^0 + \pi_2^0 \ge 0 $, we know from
Corollary~\ref{cor:ae} that $A_e = A_0$. We thus conclude that
$A_e = A_0=A_I^-$ in this case.

We now assume that $\pi_1^0 + \pi_2^0 \le 0$. In particular,
Proposition~\ref{prop:ishii} implies that
\begin{equation}\label{eq:ai-}
 A_I^- = A_I^+ = \max (A_0 ,A^*) 
\end{equation}
with 
\[ A^* = \max_{q \in [\pi_2^0,-\pi_1^0]}  \min (H_1(-q),H_2(q)). \]

Remark that the function $H_2$ is non-decreasing on the interval
$[\pi_2^0,-\pi_1^0]$ and the function $\tilde{H}_1 (q)=H_1 (-q)$ is
non-increasing.  We are going to distinguish three cases
as shown in Figure~\ref{fig:vvl}. Either the graphs of $H_2$ and
$\tilde{H}_1$ do not intersect on the interval $[\pi_2^0,-\pi_1^0]$ and $H_2$ 
is above (Case~1), or they do intersect (Case~2), or they do
not intersect and $\tilde{H}_1$ is above (Case~3). To distinguish cases,
it is enough to compare the  values of $\tilde{H}_1$ and $H_2$ at the
boundary of the interval. 
\begin{figure}
\includegraphics[height=6cm]{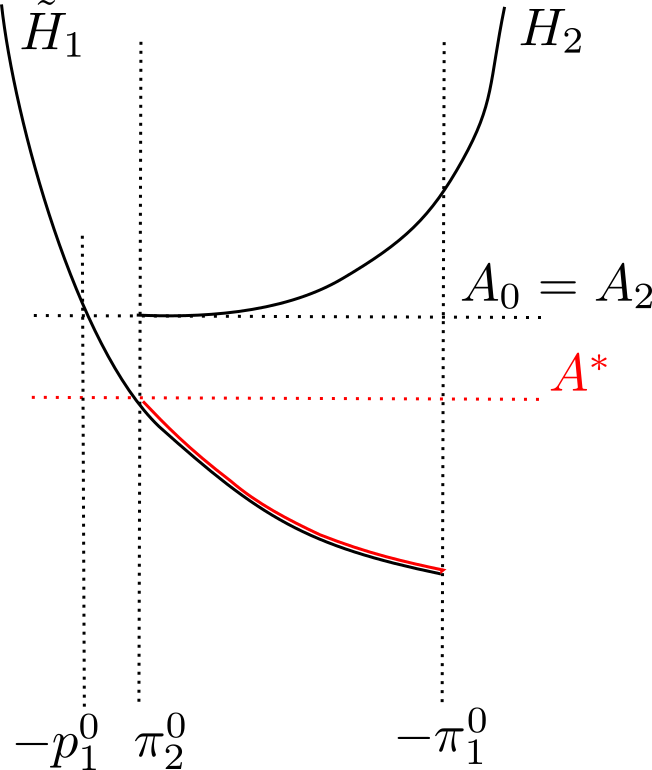} \hspace{2mm}
\includegraphics[height=6cm]{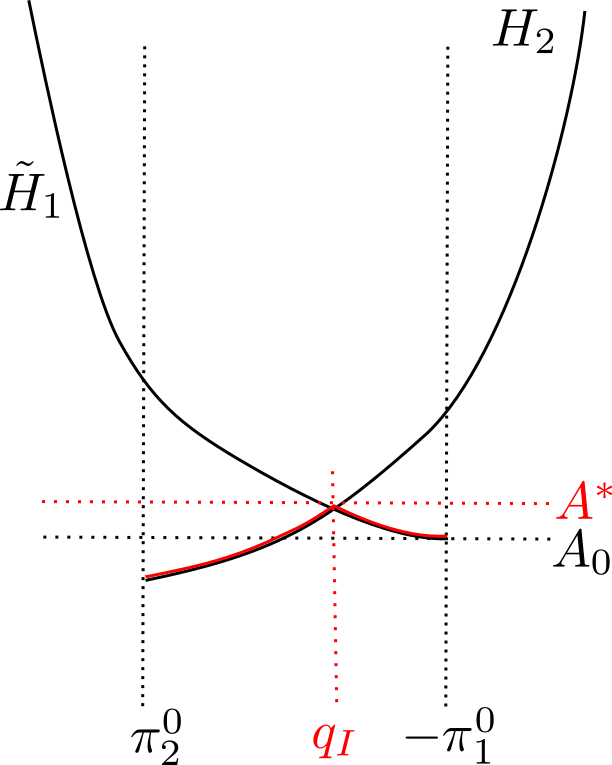} \hspace{2mm}
\includegraphics[height=6cm]{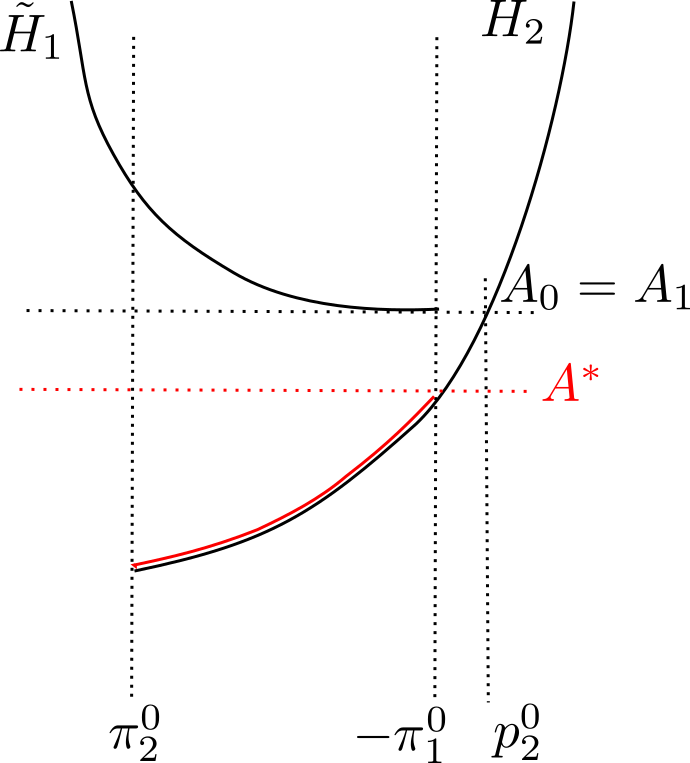}
\caption{Three cases: Case 1 (left), Case 2 (center), Case 3 (right)}
\label{fig:vvl}
\end{figure}

It is useful to introduce $A_1 = \min_{p_1 \in \R} H_1(p_1)$ and
$A_2 = \min_{p_2 \in \R} H_2(p_2)$.  Recall that $A_0 = \max(A_1,A_2)$.  

In Case~1, we have $H_2(\pi_2^0)=A_0 =A_2 \ge \tilde{H}_1(\pi_2^0)$.
It implies that $\tilde{H}_1 \le H_2$ on the interval. In particular
$A^* = \tilde{H}_1 (\pi_2^0) \le A_0$.  On the one hand,
\eqref{eq:ai-} implies that $A_I^- = A_0$.  On the other hand, since
$A_0=A_2$, we have $p_2^0 := \pi_2^+ (A_0) = \pi_2^+ (A_2)$ and
$p_1^0 := \pi_1^+(A_0) \ge -p_2^0$ (have a look at the picture).  We
thus conclude from Corollary~\ref{cor:ae} that $A_e = A_0$.  Hence
$A_I^- = A_e$ in Case~1.

In Case~2, there exists $q_I \in [\pi_2^0,-\pi_1^0]$ such that
$A^* = H_2 (q_I) = H_1 (-q_I)$ and $A^* \ge A_0$.  \eqref{eq:ai-}
implies that $A_I^- = A^*$.  But the fact that $q_I \ge \pi_2^0$ such
that $A^* = H_2 (q_I)$ implies that $q_I = \pi_2^+ (A^*)$; similarly,
$-q_I = \pi_1^+(A^*)$; hence $\pi_1^+ (A^*) + \pi_2^+(A^*) = 0$ with
$A^* \ge A_0$. We thus have from Corollary~\ref{cor:ae} that
$A_e = A^*$.  Hence $A_I^- = A_e$ in Case~2.

In Case~3, $A_0 = A_1 \ge A^*$. \eqref{eq:ai-} implies that
$A_I^- = A_0$. We also remark that
$-p_1^0 = - \pi_1^0 \le \pi_2^+ (A_0) = p_2^0$ (have a look at the
picture).  In particular, we have from Corollary~\ref{cor:ae} that
$A_e = A_0$. We thus conclude that $A_I^- = A_e$ in Case~3.

The proof is now complete. 
\end{proof}

\section{A large deviation problem}
\label{sec:ld}

In \cite{bde}, the authors study large deviation problems related to
diffusion processes whose drift is smooth on either side of a
hyperplane. Their proofs rely on probability tools and ideas.  Our
goal in this section is to propose an analytical/PDE
proof. Furthermore, by using the results of previous sections, the
rate function is related to the maximal Ishii solution of a
Hamilton-Jacobi equation.
\medskip

Consider the stochastic differential equation in $\R^{d+1}$, 
\begin{eqnarray}\label{SDE}
dX^\e(t)=b(X^\e(t))dt+\e^{1/2}\sigma(X^\e(t))dW(t), \quad X^\e(0)=x_0,\quad 0 \le t \le 1
\end{eqnarray}
with 
\[ b (x) = \begin{cases} b_1 (x) & \text{ if } x_{d+1} < 0 \\ 
b_2 (x) & \text{ if } x_{d+1} > 0\end{cases} \]
and 
\[ \sigma (x) = \begin{cases} \sigma_1 (x) & \text{ if } x_{d+1} < 0 \\ 
\sigma_2 (x) & \text{ if } x_{d+1} > 0\end{cases} \]

In order to introduce the rate function, we have to define first 
Hamiltonians and Lagrangians. Hamiltonians are defined in \cite{bde} by
\[ 
\tilde{H}_i(x,p)=\frac{1}{2} \langle a_i (x) p ,p \rangle-b_i(x) p, \quad x,p \in \mathbb{R}^{d+1}
\]
with $a_i = \sigma_i \sigma_i^T$. 
  Corresponding
Lagrangians $\tilde{L}_1$ and $\tilde{L}_2$ are related to Hamiltonians $\tilde{H}_1$ and $\tilde{H}_2$  by
the following formula \cite{bde}
\[
\tilde{H}_i(x,p)=\sup_{q \in  {\R^{d+1}}}\{-pq-\tilde{L}_i(x,q)\}.
\]
Set $\Omega_1=\R^d \times (-\infty,0),\Omega_2=\R^d \times (0,+\infty)$, $\mathcal{H}=\R^d \times \{0\}$.
\begin{equation}
\label{Lagra_juntion}
\tilde{L}(x,p)=
 \left\{
\begin{array}{ll}
\tilde{L}_1(x,p), & x \in \Omega_1, \medskip\\
\tilde{L}_2(x,p), & x \in \Omega_2, \medskip \\
\tilde{L}_0(x,p), & x \in \mathcal{H},
\end{array}
\right.
\end{equation}
where $\tilde{L}_0$ is defined by
\[
\tilde{L}_0(x,p',q)=\inf\left\{\lambda \tilde{L}_1(x,p',q_1)+(1-\lambda)\tilde{L}_2(x,p',q_2), \begin{cases} \lambda \in [0,1], q_1 \ge 0, q_2 \le 0, \\
\lambda q_1+(1-\lambda)q_2=q \end{cases} \right\}.
\]

Call $\Sigma_{x_0}$ the set of all absolutely continuous function $\phi \in C([0,1],\R^{d+1})$ satisfying $\phi(0)=x_0$.  For any $\phi \in \Sigma_{x_0}$, we define the rate function $I_{x_0}(\phi)$  as follows,
\begin{equation}\label{def:I}
I_{x_0}(\phi)=\int_0^1 \tilde{L}(\phi(s),\dot{\phi}(s))\, ds
\end{equation}
where $\tilde{L}$ is defined as in~\eqref{Lagra_juntion}.  We first state the
Laplace principle as presented
in~\cite{bde}
\begin{defn}\label{LP}
  Let $\{Y^\e(t),\e>0, 0 \le t \le 1\}$ with $Y^\e(0)=x_0$ be a family of random variables taking values in
  a Polish space $\mathcal{Y}$ and let $I_{x_0}$ be a rate function defined as in \eqref{def:I}. We say
  that $\{Y^\e\}$ satisfies a Laplace principle with the rate function $I_{x_0}$
  if, for every bounded continuous function $h$ mapping $\mathcal{Y}$
  into $\R$,   we have
\begin{eqnarray}\label{LP rate I}
\lim_{\e \to 0}\e \ln \mathbb{E}_{x_0}\big\{\exp\big[-\frac{h(Y^\e)}{\e}\big]\big\}=-\inf_{\phi  \in \Sigma_{x_0}}\{h(\phi(1))+I_{x_0}(\phi)\}.
\end{eqnarray}
\end{defn}
In \cite{bde}, the following large deviation result is proved
using probabilistic arguments. We will give a PDE proof.
\begin{thm}[\cite{bde}]\label{LDP}
Assume that
\begin{eqnarray*}
&& \left\{
\begin{array}{l}
  \text{ $b_i$ is continuous},
  \\[1mm]
  \text{ $\sigma$ is continuous and such that
$\sigma \sigma^T \ge c \mathcal{I}$ with $c>0$},\\[1mm]
  \text{ \eqref{SDE} has a unique strong solution}
\end{array}
\right.
\end{eqnarray*}
where $\mathcal{I}$ is the identity matrix. 
Then the family $\{X^\e,\e>0\}$ satisfies the Laplace principle in $C([0,1],\R^{d+1})$ with 
the rate function $I_{x_0}$ as defined in \eqref{def:I}.
\end{thm}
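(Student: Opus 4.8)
The plan is to use the classical exponential (Hopf--Cole / logarithmic) transform to turn the Laplace functional into the value at a single point of a viscous Hamilton--Jacobi equation of the type studied in Section~\ref{sec:vvl}, and then to send the noise to zero. First I would fix a bounded continuous $h$ and set, for $(t,x) \in [0,1] \times \R^{d+1}$,
\[
W^\e(t,x) = \E\big[\exp(-h(X^\e(1))/\e) \mid X^\e(t)=x\big],
\]
so that the quantity under the logarithm in \eqref{LP rate I} is $W^\e(0,x_0)$. Under the standing assumptions on \eqref{SDE} (uniform ellipticity and unique strong solvability), $W^\e$ solves the linear backward Kolmogorov equation $\partial_t W^\e + b\cdot DW^\e + \tfrac{\e}{2}\trace(a\,D^2 W^\e)=0$ with terminal datum $e^{-h/\e}$. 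Putting $v^\e=-\e\ln W^\e$ and reversing time through $\bar u^\e(\tau,x)=v^\e(1-\tau,x)$, a direct computation using $\tilde H_i(x,p)=\tfrac12\langle a_i(x)p,p\rangle-b_i(x)p$ gives
\[
\partial_\tau \bar u^\e + \tilde H_i(x,D\bar u^\e) - \tfrac{\e}{2}\trace\!\big(a_i(x)\,D^2\bar u^\e\big) = 0 \ \text{ in } \Omega_i, \qquad \bar u^\e(0,\cdot)=h,
\]
together with the key identity $\e\ln W^\e(0,x_0) = -\bar u^\e(1,x_0)$. Everything then reduces to computing $\lim_{\e\to0}\bar u^\e(1,x_0)$. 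To handle a general bounded continuous $h$ I would approximate it uniformly by bounded Lipschitz (indeed smooth) $h_n$: both sides of \eqref{LP rate I} are $1$-Lipschitz in $h$ for the sup-norm (the left-hand side because $|e^{-a}-e^{-b}|\le\|a-b\|_\infty$, the right-hand side directly from \eqref{def:I}), so it suffices to treat smooth $h$, for which $h\pm C\tau$ are sub/super-solutions for a $C$ independent of $\e$, yielding the bound $|\bar u^\e(\tau,x)-h(x)|\le C\tau$ needed below.

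Second, I would invoke the vanishing viscosity analysis of Section~\ref{sec:vvl}. Since $a_i\ge c\,\mathcal I$, the $\tilde H_i(x,\cdot)$ are convex quadratics, hence coercive with convex sublevel sets, so the hypotheses of Theorem~\ref{thm:vvl-evol-tilde} hold. The only discrepancy is that the viscosity here is $\tfrac{\e}{2}\trace(a_i D^2\cdot)$ rather than $\e\Delta$; but the proof of Theorem~\ref{thm:vvl-evol-tilde} uses the diffusion \emph{only} through its uniform ellipticity, which still forces $C^1$ regularity across the interface and hence the Kirchhoff condition $\partial_1\bar u^\e+\partial_2\bar u^\e=0$ on $\mathcal H$, while the identification of the limit through Corollary~\ref{cor:ae} and Proposition~\ref{prop:ishii} depends only on the $\tilde H_i$. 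The argument therefore carries over and gives $\bar u^\e\to u$ locally uniformly, where $u$ is the \emph{maximal Ishii solution} of $u_\tau+\tilde H_i(x,Du)=0$ in $\Omega_i$ with $u(0,\cdot)=h$. In particular $\e\ln W^\e(0,x_0)\to -u(1,x_0)$.

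Third, I would identify $u(1,x_0)$ with the variational right-hand side of \eqref{LP rate I}. Using the control-theoretic representation of the maximal Ishii solution from \cite{bbc1,bbc2}, the Legendre duality $\tilde H_i(x,p)=\sup_q\{-pq-\tilde L_i(x,q)\}$, and the time reversal $\phi(s)=\xi(1-s)$, one obtains
\[
u(1,x_0)=\inf_{\phi\in\Sigma_{x_0}}\Big\{h(\phi(1))+\int_0^1 \tilde L(\phi(s),\dot\phi(s))\,ds\Big\} = \inf_{\phi\in\Sigma_{x_0}}\{h(\phi(1))+I_{x_0}(\phi)\}.
\]
The crucial point is that the \emph{maximal} (rather than minimal) Ishii solution is precisely the value function whose interface running cost is the relaxed Lagrangian $\tilde L_0$ of \eqref{Lagra_juntion}, i.e. the infimum over convex combinations of trajectories pushing towards and away from $\mathcal H$, so that $\tilde L$ in \eqref{def:I} is exactly the right Lagrangian. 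Combining the three steps yields \eqref{LP rate I} for smooth $h$, and then for all bounded continuous $h$ by the approximation above.

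I expect the main obstacle to be the rigorous stochastic-to-PDE bridge of the first step when $b$ and $\sigma$ are \emph{discontinuous across} $\mathcal H$: one must check that the Feynman--Kac function $W^\e$ is a genuine relaxed solution in the sense of Definition~\ref{defi:relaxed}, that the logarithmic transform is licit at the level of (viscosity) solutions, and, above all, that uniform ellipticity produces enough regularity across $\mathcal H$ for the Kirchhoff condition \eqref{e:k} to hold, so that the vanishing viscosity result applies. The second delicate point is the exact matching of the interface Lagrangian $\tilde L_0$ with the maximal-Ishii control cost in the last step, but this is precisely what \cite{bbc1,bbc2} provide.
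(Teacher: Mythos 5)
Your proposal follows essentially the same route as the paper's proof: the Hopf--Cole (logarithmic) transform turning the Laplace functional into a viscous Hamilton--Jacobi equation with a Kirchhoff transmission condition on $\mathcal{H}$, the vanishing viscosity result of Theorem~\ref{thm:vvl-evol-tilde} identifying the limit as the maximal Ishii solution, and the control-theoretic identification of that solution with the variational formula built from $\tilde{L}_0$. You are in fact slightly more careful than the paper on two points it passes over silently, namely the reduction from bounded continuous to smooth $h$ (needed to secure the bound $|v_\e(t,x)-h(x)|\le Ct$ required by Theorem~\ref{thm:vvl-evol-tilde}) and the fact that the viscosity term is $\tfrac{\e}{2}\trace(a\,D^2\cdot)$ rather than $\e\Delta$.
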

\begin{proof}
Given a function $h$, let $h_\e$ denote $\exp(\frac{-h}{\e})$. The function $u_\e$ given by
\[
u_\e(t,x)=\mathbb{E}_x (h_\e(X^\e(t)))
\]
is a solution of
\[
\begin{cases}
 \frac{\partial u_\e}{\partial t}=\frac{\e}{2} \trace (a (x) D^2u_\e) +b(x) Du_\e, &  t \in (0,1), x \in \Omega_1 \bigcup \Omega_2
\\[1mm]
\frac{1}{2}\partial_{d+1} u_\e (t,x',0^+)=\frac{1}{2}\partial_{d+1} u_\e (t,x',0^-), & x \in \mathcal{H}
\\[1mm]
u_\e(0,x)=h_\e(x), & x \in \Omega_1 \bigcup \Omega_2
\end{cases}
\]
(where $a = \sigma \sigma^T$)
The function
\(
v_\e=-\e \ln (u_\e)
\)
satisfies
\[
\begin{cases}
\frac{\partial v_\e}{\partial t}=\frac{\e}{2} \trace (a(x)  D^2v_\e)-\frac{1}{2}  \langle a(x) Dv_\e,Dv_\e\rangle+b(x) Dv_\e, & t \in (0,1), x \in \Omega_1 \bigcup \Omega_2
\\[1mm]
\frac{1}{2}\partial_{d+1} v_\e (t,x',0^+)=\frac{1}{2}\partial_{d+1} v_\e (t,x',0^è), & x \in \mathcal{H}
\\[1mm]
v_\e(0,x)=h(x), & x \in \Omega_1 \bigcup \Omega_2.
\end{cases}
\]
Moreover, in view of the definition of $u_\e$ and $v_\e$, we have
\begin{eqnarray*}
v_\e(t,x)=-\e \ln \mathbb{E}_x \left \{ \exp \left[ \frac{-h(X^\e(t))}{\e} \right] \right\}.
\end{eqnarray*}
Hence, our goal is to prove that 
\begin{eqnarray*}
\lim_{\e \to 0} v_\e (1,x)=\inf_{\phi \in \Sigma_x}\{h(\phi(1))+I_{x}(\phi)\}
\end{eqnarray*}
where $I_x$ is defined in \eqref{def:I}.

We know from Theorem~\ref{thm:vvl-evol-tilde} that $v_\e$ converges locally uniformly towards the maximal Ishii solution $U^+$ of 
\begin{equation}\label{HJ}
\begin{cases}
 \frac{\partial U^+}{\partial t}+\tilde{H}_i(x,DU^+)=0, & x \in \Omega_i, \quad t \in (0,1) \\[1mm]
U^+(0,x)=h(x), & x \in \Omega_1 \bigcup \Omega_2 .
\end{cases}
\end{equation}
It thus remains to prove that 
\begin{equation}\label{u+formula}
 U^+(1,x) = \inf_{\phi \in \Sigma_x}\{h(\phi(1))+I_{x}(\phi)\}.
\end{equation}
In view of the definition of Lagrangians and Hamiltonians from \cite{bde} recalled above, we have 
\[ \tilde{H}_i (x,p) =\sup_{q \in  \R^{d+1}}\{pq-l_i(x,q)\} \quad \text{ with } \quad  l_i(x,-q)=\tilde{L}_i(x,q),
\]
here $l_i$ corresponds to the running costs considered in~\cite[Section~6]{im13}.
In view of the definition of $\tilde{L}_0$ recalled above, we have 
\begin{align*}
\tilde{L}_0(x,q',0)&=\inf \left\{\lambda \tilde{L}_1(x,q',q_1)+(1-\lambda)\tilde{L}_2(x,q',q_2), \begin{cases} 0 \le \lambda \le 1, \\ q_1 \ge 0, q_2 \le 0,\lambda q_1+(1-\lambda)q_2=0 \end{cases} \right\}\\
&=\inf \left\{\lambda l_1(x,q',v_1)+(1-\lambda)l_2(x,q',v_2), \begin{cases} 0\le \lambda \le 1, \\v_1 \le 0,  v_2 \ge 0,\lambda v_1+(1-\lambda)v_2=0 \end{cases} \right \}.
\end{align*}
Hence, the formula of $U^+$ given in~\cite{im14,bbcim} coincides with \eqref{u+formula}. The proof is now complete. 
\end{proof}

\noindent \textbf{Acknowledgement.} The authors thank Russell Schwab
for fruitful discussions in early stage of this work and for
suggesting to address the large deviation problem. They also thank Guy
Barles for stimulating discussions about the vanishing viscosity
limit. The authors are also indebted to one of the referrees who read
very carefully two successive versions of this work and made valuable
recommandations about both presentation and proofs.
\bibliographystyle{plain} \bibliography{vvl}

\end{document}